\documentclass[final]{siamltex}
\usepackage{}
\usepackage{graphicx}
\usepackage{multirow,bigstrut}
\usepackage{amsmath,amsfonts,amssymb}
\usepackage{mathrsfs}
\usepackage{color}
\usepackage{upgreek}
\usepackage{bm}
\usepackage{verbatim}
\usepackage{mathtools}
\usepackage{calligra}
\usepackage[T1]{fontenc}
\usepackage{subfigure}
\usepackage{float}

\usepackage{verbatim}
\usepackage{exscale}
\usepackage{relsize}
\newtheorem{rem}{\hspace{1mm}Remark}[section]
\newtheorem{exam}{\hspace{1mm}Example}[section]
\newtheorem{case}{\hspace{1mm}Case}[section]

\usepackage{tikz}
\usetikzlibrary{shapes,arrows}

\begin{document}


\title{Efficient multiscale algorithms for simulating nonlocal optical response of metallic nanostructure arrays}
\thanks{{\small The work of Liqun Cao and Yongwei Zhang is supported by National Natural Science Foundation of China (grant 11971030, 11571353, 91330202) and (grant 11871441, 11671369), respectively.}}
\author{ Yongwei Zhang\thanks
    {School of Mathematics and Statistics, Zhengzhou University, Zhengzhou 450001, China
    (zhangyongwei@lsec.cc.ac.cn).}
\and Chupeng Ma\thanks{{
Institute for Applied Mathematics and Interdisciplinary Center for Scientific Computing, Heidelberg University, Im Neuenheimer Feld 205, Heidelberg 69120, Germany; ({\tt machupeng@lsec.cc.ac.cn}).}}
\and Liqun Cao\thanks
    {Corresponding author.
    LSEC\@, NCMIS\@, University of Chinese Academy of Sciences,
    Institute of Computational Mathematics and Scientific\slash Engineering Computing,
    Academy of Mathematics and Systems Science, Chinese Academy of Sciences, Beijing 100190, China
    (clq@lsec.cc.ac.cn).}
\and Dongyang Shi\thanks
    {School of Mathematics and Statistics, Zhengzhou University, Zhengzhou 450001, China
    (shi\_dy@zzu.edu.cn).}}
\maketitle
\begin{abstract}
In this paper, we consider numerical simulations of the nonlocal optical response of metallic nanostructure arrays inside a dielectric host, which is of particular interest to the nanoplasmonics community due to many unusual properties and potential applications. Mathematically, it is described by Maxwell's equations with discontinuous coefficients coupled with a set of Helmholtz-type equations defined only on the domains of metallic nanostructures. To solve this challenging problem, we develop an efficient multiscale method consisting of three steps. First, we extend     the system into the domain occupied by the dielectric medium in a novel way and result in a coupled system with rapidly oscillating coefficients. A rigorous analysis of the error between the solutions of the original system and the extended system is given. Second, we derive the homogenized system and define the multiscale approximate solution for the extended system by using the multiscale asymptotic method. Third, to fix the inaccuracy of the multiscale asymptotic method inside the metallic nanostructures, we solve the original system in each metallic nanostructure separately with boundary conditions given by the multiscale approximate solution. A fast algorithm based on the $LU$ decomposition is proposed for solving the resulting linear systems. By applying the multiscale method, we obtain the results that are in good agreement with those obtained by solving the original system directly at a much lower computational cost. Numerical examples are provided to validate the efficiency and accuracy of the proposed method.
\end{abstract}

\begin{keywords}
nonlocal optical response, metallic nanostructure arrays, multiscale asymptotic method, finite element method.
\end{keywords}

\begin{AMS}
65N30, 65N55, 65F10, 65Y05
\end{AMS}

\section{Introduction}\label{sec-1}
Metallic nanostructures have attracted great interest of engineers and scientists in the last two decades due to the phenomenon of localized surface plasmon resonance (LSPR) \cite{pitarke2006theory}. When a metallic nanoparticle is irradiated by light at optical frequencies, the light wave couples with the collective oscillation of conduction band electrons at the metal surface and excites LSPR, leading to many unusual and fascinating physical properties. It enables the confinement of light at the nanoscale, the enormous local fields enhancement of the incident light, and the squeezing of light beyond the diffraction limit. As a powerful platform for manipulation
of light at the nanoscale, metallic nanostructures have found a wide range of applications in various fields, such as near-field scanning microscopy, ultrasensitive sensing and detection, solar cells, and related studies have formed a fast-growing and highly interdisciplinary field -- Nanoplasmonics \cite{maier2007plasmonics}.

In order to understand and make use of the phenomenon of LSPR, an appropriate modeling for describing the interaction between light and metal nanostructures is required. The most commonly used model for describing the optical response of metals is the Drude model (later extended into the Lorentz--Drude model) \cite{drude1900elektronentheorie}. This is a classical model based on the kinetic theory of electrons in a metal which assumes that electrons in the metal do not interact with each other and move against a positively charged ionic background due to external fields. The Drude model has achieved a great success in explaining many physical properties of bulk metals. However, as the size of metals becomes much smaller than the wavelength of the incident light, nonlocal effects due to the Pauli exclusion principle become important and the Drude model fails to explain experimentally observable phenomena. To overcome this, some more sophisticated material models, such as a nonlinear hydrodynamic model which couples macroscopic Maxwell's equations for electromagnetic fields with hydrodynamic equations for free electrons in metals, are used \cite{eguiluz1975influence}. The linear-response approximation of the hydrodynamic model yields the nonlocal hydrodynamic Drude (NHD) model, which has been a popular model in the computational study of metallic nanostructures due to its ability to describe the nonlocal optical response of metal nanoparticles with a high accuracy and a low computational effort \cite{ciraci2013hydrodynamic,raza2015nonlocal,toscano2012modified}. Mathematically, the NHD model is a coupled system of Maxwell's equations for the electric field and a Helmholtz-type equation for the polarization current.

Much effort has been devoted to solving the NHD model numerically to simulate the nonlocal optical properties of metallic nanostructures. In \cite{hiremath2012numerical}, a Galerkin finite element method based on the N\'{e}d\'{e}lec elements is proposed to solve the NHD model to simulate complex-shaped nano-plasmonic scatters. In \cite{zheng2018boundary}, a computational scheme within the framework of a boundary integral equation and a method of moments algorithm is developed for the NHD model to predict the interaction of light with metallic nanoparticles. The well-posedness and the convergence of finite element method for the NHD model are rigorously proved in \cite{ma2019mathematical}.
The hybrid discontinuous Galerkin methods for the NHD model have also been considered \cite{li2017hybridizable,schmitt2016dgtd,schmitt2018simulation,vidal2018hybridizable}. For other more numerical methods for this coupled system, we refer the reader to \cite{eremin2019numerical,huang2016theoretical,zheng2019potential} and references therein.

It is worth noting that almost all the existing numerical studies on the NHD model simulate only one or several metallic nanostructures. Compared with individual and small clusters of nanoparticles, when arranged into multidimensional arrays, metallic nanoparticles strongly interact with each other such that the collective properties can be rationally designed by changing the inter-particle spacing, array period, and overall composition, providing unparalleled opportunities for realizing materials with interesting and unusual photonic and metamaterial properties \cite{ross2016optical,wang2018rich}. Indeed, metallic nanoparticle arrays have found applications in plasmonic solar cells \cite{mokkapati2009designing} and optical sensing \cite{enoch2004optical} and the literature of related studies is fast growing in recent years. However, due to the huge computational cost, numerical simulations of optical properties of metallic nanoparticle arrays are very limited, especially in the case of three-dimensional arrays. A simple and widely used method for describing the macroscopic electromagnetic properties of composite materials is given by the Maxwell--Garnett effective medium theory (EMT) \cite{choy2015effective} in which the effective permittivity is determined by the volume fraction of the inclusions, the permittivity of the inclusions, and the background material. However, in general, the Maxwell--Garnett formula is only valid at low volume fraction. In the case of plasmon resonance, it has been reported that the Maxwell--Garnett approach is correct only at volume fraction of the inclusions lower than $10^{-5}$ \cite{belyaev2018electrodynamic}. Furthermore, in the case considered in this paper, due to the spatially-dispersive permittivity of the metallic nanostructures, the application of the Maxwell--Garnett formula will lead to a much more complicated effective system. Therefore, in order to simulate the nonlocal optical response of realistic metallic nanoparticle arrays, some accurate and efficient methods beyond the Maxwell--Garnett formula are necessary.

In this paper, we consider numerical simulations of the nonlocal optical response of metallic nanostructure arrays embedded in a dielectric medium with the NHD model. Mathematically, it involves solving a coupled system of Maxwell's equations defined on the whole domain and a set of Helmholtz-type equations defined in each metallic nanostructure separately. It is a challenging task to solve this system numerically, mainly due to two reasons. First, the coefficients of Maxwell's equations are rapidly oscillating, whose numerical solution is computationally expensive (or even impossible) in practice with a standard method, especially when there are a huge number of metallic nanostructures. Second, in contrast to many other coupled systems, this system is coupled in a novel way in which an equation defined on the whole domain is coupled with another equation defined on many unconnected domains, making it difficult to apply the classical multiscale asymptotic method directly. In order to solve this system efficiently, some special strategies to deal with such a coupling are necessary.

In this paper, we develop an efficient multiscale method to solve this coupled system, which consists of three steps. First, we extend the equation satisfied by the polarization current into the domain occupied by the dielectric medium by setting some coefficients of the equation sufficiently large outside the metallic nanostructures. The error between the solutions of the original system and the extended system is rigorously proved. In this way, we obtain a coupled system defined on the whole domain of the scatter with rapidly oscillating coefficients. Second, we use the multiscale asymptotic method to derive the homogenized system and define the multiscale approximate solution for the extended system. The coefficients of the homogenized system are constant, making it much easier to solve. In view of the fact that the polarization current has little effect on the homogenized coupled system in practice, we propose to solve the homogenized Maxwell's equations rather than the homogenized coupled system to further reduce the computational burden. Numerical results show that the multiscale asymptotic method produces satisfactory results outside the metallic nanostructures while it fails to capture the oscillations of the electric field inside the metallic nanostructures. Third, in order to remedy the failure of the multiscale asymptotic method inside the metallic nanostructures, we solve the original system in each metallic nanostructure separately with boundary conditions given by the multiscale approximate solution. Since now the system is only solved in each metallic nanostructure with constant coefficients, the computational cost is much lower. In addition, by taking advantage of the periodic arrangement of metallic nanostructures, we propose a fast algorithm based on the $LU$ decomposition for solving the resulting linear systems to further reduce the computational cost and make the algorithm applicable to the system with a huge number of metallic nanostructures. By these three steps, we get the results agreeing well with those obtained from solving the original system directly with a much lower computational effort. The multiscale method developed in this paper might be able to be generalized to simulate the optical properties of composite materials with the non-dispersive media matrix and the dispersive media inclusions, such as the Drude, Lorentz--Drude and Debye media.

The rest of this paper is organized as follows. In section~\ref{sec-2}, we briefly introduce the NHD model and describe the problem considered in this paper. The extended system is given at the end of this section. In section~\ref{sec-3}, we derive the homogenized system and define the multiscale approximate solution for the extended system by using the technique of multiscale asymptotic expansion. In section~\ref{sec-4}, we describe the multiscale approaches and the associated numerical algorithms in detail. In section~\ref{sec-5}, we present some numerical experiments to demonstrate the accuracy and efficiency of our method.

\section{Problem Formulation}\label{sec-2}
Throughout this paper, we use $\mathbf{curl}$, $\mathbf{div}$, and $\mathbf{grad}$ to denote the curl, divergence, and gradient operators, respectively.

\subsection{Nonlocal hydrodynamic Drude model}
The NHD model is derived via the linearization of the hydrodynamic model \cite{ciraci2013hydrodynamic,raza2015nonlocal}. Within the hydrodynamic model, the free electrons in a metal are considered as a charged and compressible fluid which is described in terms of the charge density $n(\mathbf{x},t)$, the electron fluid velocity ${\bf v}(\mathbf{x},t)$, and the electron pressure $p(\mathbf{x},t)$. Under the influence of macroscopic electromagnetic fields, the motion of the electron fluid is determined by the Euler equation
 \begin{equation}\label{eq:1.1}
 {\displaystyle m_{e}(\partial_{t} + {\bf v}\cdot \mathbf{grad} +\gamma){\bf v} = -e({\bf E} + \mu_{0}{\bf v}\times {\bf H}) - \frac{\mathbf{grad} \;p}{n}},
\end{equation}
along with the continuity equation
\begin{equation}\label{eq:1.2}
{\displaystyle  \partial_{t}n+\mathbf{div}(n{\bf v})= 0 ,}
\end{equation}
where $-e$ is the electron charge, $m_{e}$ is the effective electron mass, and $\gamma>0$ is the damping constant. The term $-e({\bf E} + \mu_{0}{\bf v}\times {\bf H})$ represents the Lorentz force (assuming non-magnetic materials) in which the electric field $\mathbf{E}$ and the magnetic field $\mathbf{H}$ satisfy the macroscopic Maxwell's equations
\begin{equation}\label{eq:1.3}
\left\{
\begin{array}{lll}
{\displaystyle  \mathbf{curl} \;{\bf E} = -{\mu_{0}} \partial_{t}{\bf H},}\\[2mm]
 {\displaystyle \mathbf{curl} \;{\bf H} = \varepsilon_{0} \varepsilon_{\infty} \partial_{t}{\bf E} +{\bf J}. }
 \end{array}
 \right.
\end{equation}
Here $\mu_{0}$ and ${\varepsilon_{0}}$ are the magnetic permeability and electric permittivity of vacuum, respectively, and $\varepsilon_{\infty}$ is the relative electric permittivity of the metal that takes into account the polarization of bound electrons. The hydrodynamic equations (\ref{eq:1.1})-(\ref{eq:1.2}) are coupled with Maxwell's equations (\ref{eq:1.3}) via the polarization current density $\mathbf{J}$ of the free electrons:
\begin{equation}\label{eq:1.4}
\mathbf{J} = -en\mathbf{v}.
\end{equation}
The above equations (\ref{eq:1.1})-(\ref{eq:1.4}) constitute the hydrodynamic model containing nonlinear and nonlocal effects. In order to get a simplified system of equations more suitable for numerical computation, we linearize the hydrodynamic model in a perturbative manner. The physical fields are expanded into a non-oscillating term (e.g. the constant equilibrium electron density $n_{0}$) and a small first-order dynamic term as follows:
\begin{equation}\label{eq:1.5}
n({\bf x},t)\approx n_{0} + n_{1}({\bf x},t), \quad {\bf v}({\bf x},t) \approx {\bf v}_{0} + {\bf v}_{1}({\bf x},t).
\end{equation}
Similar expansions can be written for the electric and magnetic fields. Since in the absence of external fields ${\bf v} = {\bf v}_{0} = {\bf 0}$, the two nonlinear (higher-order) terms ${\bf v}\cdot {\bf grad}\,{\bf v}$ and ${\bf v}\times {\bf H}$ in (\ref{eq:1.1}) are dropped out due to linearization. By using the Thomas-Fermi theory of metals \cite{raza2015nonlocal}, the electron pressure $p(\mathbf{x},t)$ is given by
\begin{equation}\label{eq:1.5.0}
p(\mathbf{x},t)=(3\pi^{2})^{2/3} \frac{\hbar^{2}}{5m_{e}}n({\bf x},t)^{5/3}.
\end{equation}
In view of (\ref{eq:1.5}) and (\ref{eq:1.5.0}), the term $\displaystyle \frac{{\bf grad} \,p}{n}$ in (\ref{eq:1.1}) can be linearized as
\begin{equation}\label{eq:1.6}
\frac{\mathbf{grad} \; p}{n} \approx m_{e}\beta^{2}\frac{\mathbf{grad} \; n}{n_{0}},
\end{equation}
where $\displaystyle \beta^{2} = \frac{(3\pi)^{2/3}}{3}\frac{\hbar^{2}}{m_{e}^{2}}n_{0}^{2/3}$ is a parameter representing the nonlocality. Using the approximation (\ref{eq:1.6}) and neglecting the two nonlinear terms in (\ref{eq:1.1}) due to linearization, we arrive at the following linearized Euler equation
 \begin{equation}\label{eq:1.7}
\partial_{t}{\bf v}  = \frac{-e}{m_{e}}{\bf E}  - \gamma {\bf v} - \beta^{2}\frac{\mathbf{grad} \; n}{n_{0}}
\end{equation}
and the linearized continuity equation
\begin{equation}\label{eq:1.8}
\partial_{t}n+n_{0}\mathbf{div}\;{\bf v}= 0.
\end{equation}
Differentiating (\ref{eq:1.7}) with respect to time $t$, inserting the linearized current density ${\bf J} \approx -en_{0}{\bf v}$ and making use of (\ref{eq:1.8}), we come to
\begin{equation}\label{eq:1.9}
\partial_{tt}{\bf J} + \gamma\partial_{t}{\bf J} - \beta^{2}\mathbf{grad} \,(\mathbf{div}\,{\bf J}) - \omega_{p}^{2} \varepsilon_{0} \partial_{t} {\bf E} = 0,
\end{equation}
where $\omega_{p} = \sqrt{n_{0}e^{2}/(m_{e}\varepsilon_{0})}$ is the plasma frequency. Combining (\ref{eq:1.3}) and (\ref{eq:1.9}), we obtain the NHD model for metals
 \begin{equation}\label{eq:1.10}
\left\{
\begin{array}{@{}l@{}}
{\displaystyle  \mathbf{curl}\;{\bf E} = -\mu_{0}\partial_{t}{\bf H} ,}\\[2mm]
 {\displaystyle \mathbf{curl} \; {\bf H} = \varepsilon_{0}\varepsilon_{\infty}\partial_{t}{\bf E} +{\bf J} ,} \\[2mm]
 {\displaystyle \partial_{tt}{\bf J} + \gamma\partial_{t}{\bf J} - \beta^{2}\mathbf{grad} \,(\mathbf{div}\,{\bf J}) - \omega_{p}^{2} \varepsilon_{0} \partial_{t} {\bf E} = 0.}
\end{array}
\right.
\end{equation}
Replacing $\partial_{t}$ with $-{\rm i}\omega$ in (\ref{eq:1.10}) by Fourier transformation in the time domain, where ${\rm i}$ is the imaginary unit and $\omega$ is the angular frequency, and eliminating the magnetic field ${\bf H}$, we obtain the NHD model in the frequency domain
 \begin{equation}\label{eq:1.11}
\left\{
\begin{array}{@{}l@{}}
{\displaystyle \mathbf{curl}\;(\mu_{0}^{-1}\mathbf{curl}\;{\bf E}) - \varepsilon_{0}\varepsilon_{\infty}\omega^{2}{\bf E} = {\rm i}\omega {\bf J}, }\\[2mm]
 {\displaystyle \omega(\omega+{\rm i}\gamma){\bf J} + \beta^{2} \mathbf{grad} \,(\mathbf{div}\,{\bf J}) = {\rm i}\omega \omega^{2}_{p}\varepsilon_{0}{\bf E}.}
\end{array}
\right.
\end{equation}

\begin{rem}
By Fourier transformation in the space domain, we replace the operator ${\bf grad}\, {\bf div}$ with $-{\bf k}^{2}$ in the second equation of (\ref{eq:1.11}) and get
\begin{equation}
{\bf J}= \frac{{\rm i}\omega \omega^{2}_{p}\varepsilon_{0}\,{\bf E}}{{\omega(\omega+{\rm i}\gamma)-\beta^{2}{\bf k}^{2}}},
\end{equation}
which leads to the spatially-dispersive (relative) permittivity for the metal
\begin{equation}
\epsilon(\omega,{\bf k}) = \epsilon_{\infty}- \frac{\omega_{p}^{2}}{\omega(\omega+{\rm i}\gamma)-\beta^{2}{\bf k}^{2}}.
\end{equation}
The parameter $\beta$ represents the level of nonlocality. As $\beta\rightarrow 0$, the NHD model reduces to the classical local-response Drude model.
\end{rem}

\subsection{Electromagnetic scattering by metallic nanostructure arrays}
In this paper, we consider the problem as shown in Figure~2.1 where the metallic nanostructure arrays embedded in a dielectric medium are illuminated by an incident plane wave. The interaction of light with metallic nanostructure arrays is described by the NHD model
 \begin{equation}\label{eq:2.1}
\left\{
\begin{array}{@{}l@{}}
    {\displaystyle  \mathbf{curl}\, (\mu_{\eta}^{-1}\,({\bf x}) \mathbf{curl}\, {\bf E}_{\eta}) - \varepsilon_{\eta} ({\bf x}) \omega^{2}\, {\bf E}_{\eta} -{\mathrm {{i}}}\omega {\bf J}_{\eta} ={\bf 0}, \,\;\quad {\rm in} \;\; \Omega_{s},}\\[2mm]
  {\displaystyle \omega(\omega+{\mathrm {{i}}}\gamma){\bf J}_{\eta} + \beta^{2} \mathbf{grad}\,(\mathbf{div}\, {\bf J}_{\eta}) -{\rm i}\omega \omega^{2}_{p}\varepsilon_{0}{\bf E}_{\eta}={\bf 0}, \;\quad {\rm in}\;\; \Omega_{s,\eta}.}
\end{array}
\right.
\end{equation}
Here $\Omega_{s}\subset \mathbb{R}^{3}$ is a bounded Lipschitz domain occupied by the scatter (the metallic nanostructures and the dielectric medium). $\Omega_{s,\eta} =\cup_{k=1}^{N} \Omega_{\eta}^{k}$ denotes the domain occupied by the metallic nanostructures, where $\Omega_{\eta}^{k}$ is the domain occupied by the $k$-th nanostructure and $N$ is the number of metallic nanostructures. $\eta$ is the relative size of the periodic microstructure and we denote by $Y=(0,l_{1})\times(0,l_{2})\times(0,l_{3})$ the rescaled reference cell, where $l_{1}$,  $l_{2}$, and $l_{3}$ are positive constants. $\mu_{\eta}(\mathbf{x})$ and $\varepsilon_{\eta}(\mathbf{x})$ are the magnetic permeability and electric permittivity, respectively. We assume that $\mu_{\eta}(\mathbf{x}) = \mu(\mathbf{x}/\eta)$, $\varepsilon_{\eta}(\mathbf{x}) = \varepsilon(\mathbf{x}/\eta)$, where $\mu({\bf y})$ and $\varepsilon({\bf y})$ satisfy

$ (\mathbf{A}_1). $ $\mu({\bf y})$ and $\varepsilon({\bf y})$ are $Y$-periodic in $\mathbb{R}^{3}$, i.e.,
\begin{equation*}
\mu({\bf y} + kl_{i} \mathbf{e}_{i}) = \mu({\bf y}), \quad \varepsilon({\bf y} + kl_{i} \mathbf{e}_{i}) = \varepsilon({\bf y}),\quad \forall k\in \mathbb{Z},\quad i=1,2,3, \quad {\bf y}\in\mathbb{R}^{3},
\end{equation*}
and $\{\mathbf{e}_{1},\, \mathbf{e}_{2},\,\mathbf{e}_{3}\}$ is the canonical basis of $\mathbb{R}^{3}$.

\vspace{2mm}
$ (\mathbf{A}_2). $ There exists positive constants $\mu_{1}$, $\mu_{2}$, $\varepsilon_{1}$, and $\varepsilon_{2}$ such that $\mu({\bf y})$ and $\varepsilon({\bf y})$ satisfy
 \begin{equation*}
0<\mu_1\leq \mu({\bf y}) \leq \mu_2< \infty,\quad
0<\varepsilon_1\leq \varepsilon({\bf y})\leq \varepsilon_2< \infty ,\quad \forall {\bf y}\in Y.
\end{equation*}
\begin{rem}
In the simple case as illustrated in Fig~2.1, where there is only one metallic nanostructure inside the reference cell, $\mu({\bf y})$ and $\varepsilon({\bf y})$ are piecewise constant functions given by
\begin{gather*}
\mu({\bf y})
=\left\{
\begin{array}{lll}
 \mu_{m}, \,  \quad \mathbf{y} \in Y_{1}\\[3mm]
 \mu_{d}, \,  \,\,\quad \mathbf{y} \in Y_{2},
\end{array}
\right.
\quad \varepsilon({\bf y})
=\left\{
\begin{array}{l}
 \varepsilon_{m}, \,  \quad \mathbf{y}\in Y_{1}\\[3mm]
 \varepsilon_{d}, \,  \;\quad \mathbf{y} \in Y_{2},
\end{array}
\right.
\end{gather*}
where $Y_{1}\subset Y$ and $Y_{2}\subset Y$ are the domains occupied by the metallic nanostructure and the dielectric medium, respectively, $\mu_{m}\,$($\varepsilon_{m}$) and $\mu_{d}\,$($\varepsilon_{d}$) are the magnetic permeability (electric permittivity) of the metal and dielectric medium, respectively.
\end{rem}

\begin{figure}\label{fig:1-1}
\centering
\begin{tikzpicture}
\filldraw[fill=black!15!white,draw=black] (-1.0,-1.0)  rectangle (1.0,1.0);
\foreach \x in {-0.75,-0.25,0.25,0.75}
\foreach \y in {-0.75,-0.25,0.25,0.75}
{
  \filldraw[fill=red!40, draw=red!20!black] (\x,\y) circle (0.125cm);
}
\node at (0,3.0) {Plane wave};
\node at (0,2.5) {excitation};
\draw[thick][->](2,3)--(0.9, 1.35);
\draw[thick](1.0,14.2/6)--(1.8, 11/6);
\draw[thick](1.2,16/6)--(2.0, 12.8/6);
\draw[thick](1.1,15.1/6)--(1.9, 11.9/6);

\draw[dashed] (0,0) circle (2.1cm);

\filldraw[fill=black!15!white,draw=black] (3.5,-1.5)  rectangle (6.5,1.5);
\filldraw[fill=red!40, draw=red!20!black] (5,0) circle (0.75cm);
\end{tikzpicture}
\caption{Scattering setting: an incident electromagnetic wave is incident on metallic nanostructure arrays embedded in a dielectric medium. We assume that the dielectric medium is surrounded by free space. The rescaled reference cell is plotted on the right.}
\end{figure}
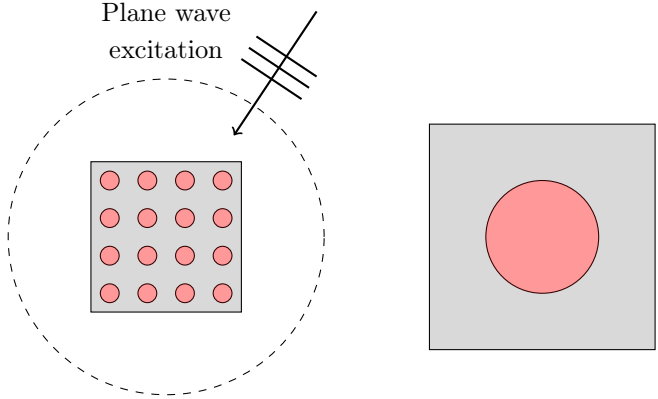

Note that the polarization current $\mathbf{J}_{\eta}$ only exists in the metallic nanostructures. For the well-posedness of the problem, we need to impose some appropriate boundary conditions on $\partial \Omega_{s,\eta}$ for $\mathbf{J}_{\eta}$. In this paper, we use the hard-wall boundary conditions
\begin{equation}\label{eq2.2}
    {\displaystyle {\bf n}\cdot {\bf J}_{\eta}= 0,\quad  {\rm on}\;\; \partial \Omega_{s,\eta},}
\end{equation}
which implies that the electrons are confined within the metal and spill-out of electrons outside the metal is neglected.

We assume that the scatter is surrounded by free space in which the (total) electric field satisfies the following Maxwell's equations
 \begin{equation}\label{eq2.3}
  {\displaystyle \mathbf{curl}\,(\mu_0^{-1}\,\mathbf{curl}\,\mathbf{E})-\varepsilon_0\omega^2\,\mathbf{E}=\mathbf{0}}
\end{equation}
and the scattered field $\mathbf{E}_{s} = \mathbf{E} - \mathbf{E}^{inc}$ satisfies the Silver-M\"{u}ller radiation condition
\begin{equation}\label{eq:2.4}
    {\displaystyle (\mu_0^{-1}\,\mathbf{curl}\, {\bf E}_s)\times \hat{\bf x} - {\mathrm{{i}}} \omega{\bf E}_s  =o(1/|{\mathbf{x}}|),\quad {\rm as}\quad |\mathbf{x}|\rightarrow \infty}.
\end{equation}
Here $\mathbf{E}^{inc}$ denotes the electric field of the incident wave. In addition, we need to impose the transmission conditions across the boundary $\partial\Omega_{s}$ for the electric field
\begin{equation}\label{eq:2.5}
\mathbf{n}\times \mathbf{E}_{\eta}=\mathbf{n}\times \mathbf{E},\quad
\mathbf{n}\times (\mu_{\eta}^{-1}\mathbf{curl}\,\mathbf{E}_{\eta})=\mathbf{n}\times (\mu_0^{-1}\mathbf{curl}\,\mathbf{E}).
\end{equation}

The scattering problem (\ref{eq:2.1})-(\ref{eq:2.5}) is defined on the whole space. To make this unbounded electromagnetic problem suitable for numerical computations, it is necessary to truncate the infinite domain. Different techniques can be used, such as absorbing boundary conditions, perfectly matched layer (PML), and boundary integral equation techniques. In this paper, we apply the following absorbing boundary condition on the boundary of a much larger domain $\Omega$ that contains the domain $\Omega_{s}$ of the scatter
 \begin{equation}\label{eq:2.6}
    {\displaystyle (\mu_0^{-1}\,\mathbf{curl}\, {\bf E}) \times {\bf n} - {\mathrm {{i}}}\omega({\bf n}\times{\bf E})\times{\bf n} ={\bf g},\quad {\rm on}\;\; \partial \Omega,}
\end{equation}
where ${\bf g}({\bf x})=(\mu_0^{-1}\,\mathbf{curl}\, {\bf E}^{inc})\times {\bf n} - {\mathrm {{i}}}\omega({\bf n}\times{\bf E}^{inc})\times{\bf n}$. (\ref{eq:2.6}) is the first-order approximation of the Silver-M\"uller radiation condition \cite{stupfel1995theoretical}. It is important to note that the multiscale method developed in this paper is independent of the choice of domain truncation techniques.

Combining (\ref{eq:2.1})-(\ref{eq:2.6}), Maxwell's equations coupled with the NHD model which describe electromagnetic scattering by metallic nanostructure arrays embedded in a dielectric host are given by
 \begin{equation}\label{eq:2.7}
\left\{
\begin{array}{@{}l@{}}
    {\displaystyle  \mathbf{curl}\, (\mu_{\eta}^{-1}\,({\bf x}) \mathbf{curl}\, {\bf E}_{\eta}) - \varepsilon_{\eta} ({\bf x}) \omega^{2}\, {\bf E}_{\eta} -{\mathrm {{i}}}\omega {\bf J}_{\eta} ={\bf 0}, \quad \;{\rm in} \;\; \Omega_{s},}\\[2mm]
  {\displaystyle \omega(\omega+{\mathrm {{i}}}\gamma){\bf J}_{\eta} + \beta^{2} \mathbf{grad}\,(\mathbf{div}\, {\bf J}_{\eta}) -{\rm i}\omega \omega^{2}_{p}\varepsilon_{0}{\bf E}_{\eta}={\bf 0},\;\quad {\rm in} \;\Omega_{s,\eta},}\\[2mm]
 {\displaystyle \mathbf{curl}\,(\mu_0^{-1}\,\mathbf{curl}\,\mathbf{E})-\varepsilon_0\omega^2\,\mathbf{E}=\mathbf{0}, \qquad \qquad \qquad\quad \; \;\quad {\rm in}\;\; \Omega/\Omega_{s}, } \\[2mm]
 {\displaystyle \mathbf{n}\times (\mu_{\eta}^{-1}\mathbf{curl}\,\mathbf{E}_{\eta})=\mathbf{n}\times (\mu_0^{-1}\mathbf{curl}\,\mathbf{E}),\qquad {\rm on}\; \partial \Omega_{s}, }\\[2mm]
 {\displaystyle \mathbf{n}\times \mathbf{E}_{\eta}=\mathbf{n}\times \mathbf{E},\qquad \qquad \quad \qquad \qquad \;\qquad {\rm on}\; \partial \Omega_{s}, }\\[2mm]
 {\displaystyle  {\bf n}\cdot {\bf J}_{\eta}= 0,\qquad \qquad \;\;\, \qquad \qquad \qquad \;\qquad\quad  {\rm on}\;\; \partial \Omega_{s,\eta}, }\\[2mm]
 {\displaystyle   (\mu_0^{-1}\,\mathbf{curl}\, {\bf E}) \times {\bf n} - {\mathrm {{i}}}\omega({\bf n}\times{\bf E})\times{\bf n} ={\bf g},\;\;\quad {\rm on}\;\; \partial \Omega.}
\end{array}
\right.
\end{equation}

Due to the rapidly oscillating coefficients of Maxwell's equations, it is usually very difficult to solve the coupled system (\ref{eq:2.7}) directly by a standard numerical method. In general, this difficulty can be overcome by using the homogenization and multiscale asymptotic methods. However, an interesting feature of this coupled system is that the equation satisfied by the polarization current is only defined in the metallic nanostructures while couples with Maxwell's equations defined on the whole domain, which prevents the application of the usual multiscale asymptotic method. To tackle this problem, we extend the equation satisfied by the polarization current outside the metallic nanostructures in a novel way.

First, we introduce the $Y$-periodic functions $\gamma_{\lambda}(\mathbf{y})$ and $\beta^{2}_{\lambda}(\mathbf{y})$ which are given by
\begin{gather}\label{eq:2.7.0}
\gamma_{\lambda}(\mathbf{y})
=\left\{
\begin{array}{lll}
 \gamma, \,\,  \quad \mathbf{y} \in Y_{1}\\[3mm]
 \lambda, \,  \,\,\quad \mathbf{y} \in Y_{2},
\end{array}
\right.
\quad \beta^{2}_{\lambda}(\mathbf{y})
=\left\{
\begin{array}{l}
 \beta^{2}, \,  \quad \mathbf{y}\in Y_{1}\\[3mm]
 \lambda, \, \,\, \;\quad \mathbf{y} \in Y_{2},
\end{array}
\right.
\end{gather}
in the reference cell, where $Y_{1}\subset Y$ and $Y_{2}\subset Y$ are the domains occupied by metallic nanostructures and the dielectric medium, respectively, and $\lambda >0 $ is the extension parameter which is assumed to be sufficiently large.

Next we set $\gamma_{\eta,\lambda}(\mathbf{x}) = \gamma_{\lambda}({\bf x}/\eta)$ and $\beta^{2}_{\eta,\lambda}(\mathbf{x}) = \beta^{2}_{\lambda}(\mathbf{x}/\eta)$, which satisfy
\begin{gather}\label{eq:2.8}
\gamma_{\eta,\lambda}(\mathbf{x})
=\left\{
\begin{array}{l}
 \gamma, \,  \quad \mathbf{x}\in\Omega_{s,\eta}\\[3mm]
 \lambda, \,  \quad \mathbf{x}\in \Omega_{s}/\Omega_{s,\eta},
\end{array}
\right.
\quad \beta^{2}_{\eta,\lambda}(\mathbf{x})
=\left\{
\begin{array}{l}
 \beta^{2}, \,  \quad \mathbf{x}\in\Omega_{s,\eta}\\[3mm]
 \lambda, \,  \;\,\quad\mathbf{x}\in\Omega_{s}/\Omega_{s,\eta},
\end{array}
\right.
\end{gather}
respectively.

Finally, we extend the equation satisfied by the polarization current into the whole of $\Omega_{s}$ by replacing the constants $\gamma$ and $\beta^{2}$ in the equation by $\gamma_{\eta,\lambda}(\mathbf{x})$ and $\beta^{2}_{\eta,\lambda}(\mathbf{x})$, respectively. We denote by $({\mathbf{J}}_{\eta,\lambda}, {\mathbf{E}}_{\eta,\lambda}, {\mathbf{E}}_{\lambda})$ the extended solution, which satisfies the extended system
 \begin{equation}\label{eq:2.9}
\left\{
\begin{array}{@{}l@{}}
    {\displaystyle  \mathbf{curl}\, (\mu_{\eta}^{-1} \mathbf{curl}\, {\bf E}_{\eta,\lambda}) - \varepsilon_{\eta} \omega^{2}\, {\bf E}_{\eta,\lambda} -{\mathrm {{i}}}\omega {\bf J}_{\eta,\lambda} ={\bf 0}, \qquad \quad \qquad \quad \;{\rm in} \;\; \Omega_{s},}\\[2mm]
  {\displaystyle \omega(\omega+{\mathrm {{i}}}\gamma_{\eta,\lambda}){\bf J}_{\eta,\lambda} + \mathbf{grad}\,(\beta^{2}_{\eta,\lambda}\mathbf{div}\, {\bf J}_{\eta,\lambda}) -{\rm i}\omega \omega^{2}_{p}\varepsilon_{0}{\bf E}_{\eta,\lambda}={\bf 0},\;\quad {\rm in} \;\;\Omega_{s},}\\[2mm]
 {\displaystyle \mathbf{curl}\,(\mu_0^{-1}\,\mathbf{curl}\,{\mathbf{E}}_{\lambda})-\varepsilon_0\omega^2\,{\mathbf{E}}_{\lambda}=\mathbf{0}, \qquad \qquad \quad \qquad \qquad\quad \; \;\quad {\rm in}\;\; \Omega/\Omega_{s}, } \\[2mm]
 {\displaystyle \mathbf{n}\times (\mu_{\eta}^{-1}\mathbf{curl}\,{\mathbf{E}}_{\eta,\lambda})=\mathbf{n}\times (\mu_0^{-1}\mathbf{curl}\,{\mathbf{E}}_{\lambda}),\,\qquad {\rm on}\; \partial \Omega_{s}, }\\[2mm]
 {\displaystyle \mathbf{n}\times {\mathbf{E}}_{\eta,\lambda}=\mathbf{n}\times {\mathbf{E}}_{\lambda},\qquad \qquad \quad  \qquad \qquad \;\qquad {\rm on}\; \partial \Omega_{s}, }\\[2mm]
 {\displaystyle  {\bf n}\cdot {\bf J}_{\eta,\lambda}= 0,\qquad \quad \qquad \;\, \qquad \qquad \qquad \;\qquad\quad  {\rm on}\;\,\partial \Omega_{s}, }\\[2mm]
 {\displaystyle   (\mu_0^{-1}\,\mathbf{curl}\, {\bf E}_{\lambda}) \times {\bf n} - {\mathrm {{i}}}\omega({\bf n}\times{\bf E}_{\lambda})\times{\bf n} ={\bf g},\;\;\;\quad {\rm on}\;\; \partial \Omega.}
\end{array}
\right.
\end{equation}

We have the following error estimate of the extension:
\begin{theorem}\label{thm:2.1}
Let $({\mathbf{J}}_{\eta}, {\mathbf{E}}_{\eta}, {\mathbf{E}})$ and $({\mathbf{J}}_{\eta,\lambda}, {\mathbf{E}}_{\eta,\lambda}, {\mathbf{E}}_{\lambda})$ be the solutions of the original system (\ref{eq:2.7}) and the extended system (\ref{eq:2.9}), respectively. Assume that $\mathbf{g}\in \mathbf{L}^{2}(\partial \Omega)$ and $\Omega$, $\Omega_s$, and $\Omega_{s,\eta}$ are bounded, simply-connected, Lipschitz domains in $\mathbb{R}^{3}$ with $\bar{\Omega}_{s}\subset \Omega$ and $\bar{\Omega}_{s,\eta}\subset \Omega_{s}$.  It holds for any $\lambda>0$,
\begin{equation}
\begin{array}{lll}
{\displaystyle \Vert {\mathbf{J}}_{\eta} - {\mathbf{J}}_{\eta,\lambda}\Vert_{\mathbf{H}(\mathbf{div};\Omega_{s,\eta})} +\Vert {\mathbf{E}}_{\eta} - {\mathbf{E}}_{\eta,\lambda}\Vert_{\mathbf{H}(\mathbf{curl};\Omega_{s})} }\\[2mm]
{\displaystyle \qquad \quad +\, \Vert {\mathbf{E}} - {\mathbf{E}}_{\lambda}\Vert_{\mathbf{H}(\mathbf{curl};\Omega/\Omega_{s})} \leq {C}/{\lambda^{\frac12}}, }\\[2mm]
\end{array}
\end{equation}
where $C$ is a positive constant independent of $\lambda$ but might depend on $\eta$.
\end{theorem}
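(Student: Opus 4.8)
The plan is to pass to the weak forms of \eqref{eq:2.7} and \eqref{eq:2.9}, gluing the two electric fields into one unknown in $\mathbf{H}(\mathbf{curl};\Omega)$ through the transmission conditions and keeping the currents in the subspace of $\mathbf{H}(\mathbf{div})$ with vanishing normal trace (which is natural for the $\mathbf{grad}\,\mathbf{div}$ operator and encodes the hard-wall condition). The obstruction to simply subtracting the two problems is that $\mathbf{J}_{\eta}$ is defined only on $\Omega_{s,\eta}$ while $\mathbf{J}_{\eta,\lambda}$ is defined on all of $\Omega_{s}$; the whole argument rests on first showing that $\mathbf{J}_{\eta,\lambda}$ is $O(\lambda^{-1/2})$ in the dielectric part $\Omega_{s}/\Omega_{s,\eta}$ and then repairing the ensuing trace mismatch on $\partial\Omega_{s,\eta}$.

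\emph{Step 1 (decay in the dielectric).} Testing the current equation of \eqref{eq:2.9} with $\overline{\mathbf{J}}_{\eta,\lambda}$ and taking imaginary and real parts, and using $\gamma_{\eta,\lambda}=\beta^{2}_{\eta,\lambda}=\lambda$ on $\Omega_{s}/\Omega_{s,\eta}$, I would obtain (after absorbing the $\lambda$-weighted terms)
\[
\lambda\,\|\mathbf{J}_{\eta,\lambda}\|_{\mathbf{H}(\mathbf{div};\,\Omega_{s}/\Omega_{s,\eta})}^{2}\;\le\;C\,\|\mathbf{E}_{\eta,\lambda}\|_{\mathbf{L}^{2}(\Omega_{s})}^{2}.
\]
Combined with a uniform-in-$\lambda$ bound $\|\mathbf{E}_{\eta,\lambda}\|_{\mathbf{L}^{2}(\Omega_{s})}\le C$, this gives $\|\mathbf{J}_{\eta,\lambda}\|_{\mathbf{H}(\mathbf{div};\,\Omega_{s}/\Omega_{s,\eta})}\le C\lambda^{-1/2}$ and hence $\|\mathbf{n}\cdot\mathbf{J}_{\eta,\lambda}\|_{H^{-1/2}(\partial\Omega_{s,\eta})}\le C\lambda^{-1/2}$ by the normal-trace theorem.

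\emph{Step 2 (comparison through the reduced problem).} Put $\mathbf{e}_{\lambda}=\mathbf{E}_{\eta,\lambda}-\mathbf{E}_{\eta}$ on $\Omega_{s}$ and $\mathbf{e}_{\lambda}=\mathbf{E}_{\lambda}-\mathbf{E}$ on $\Omega/\Omega_{s}$. Testing the current equation of \eqref{eq:2.9} against the zero-extensions to $\Omega_{s}$ of elements of $\mathbf{H}(\mathbf{div};\Omega_{s,\eta})$ with vanishing normal trace on $\partial\Omega_{s,\eta}$ shows that $\mathbf{J}_{\eta,\lambda}|_{\Omega_{s,\eta}}$ satisfies the current equation of \eqref{eq:2.7} with $\mathbf{E}_{\eta,\lambda}$ replacing $\mathbf{E}_{\eta}$. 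Writing $\mathbf{J}_{\eta,\lambda}|_{\Omega_{s,\eta}}-\mathbf{J}_{\eta}=\mathbf{k}_{\lambda}+\mathbf{r}_{\lambda}$ with $\mathbf{k}_{\lambda}$ of vanishing normal trace and $\mathbf{r}_{\lambda}$ a bounded lifting of $\mathbf{n}\cdot\mathbf{J}_{\eta,\lambda}|_{\partial\Omega_{s,\eta}}$ (so $\|\mathbf{r}_{\lambda}\|_{\mathbf{H}(\mathbf{div};\Omega_{s,\eta})}\le C\lambda^{-1/2}$ by Step 1), one verifies that $(\mathbf{e}_{\lambda},\mathbf{k}_{\lambda})$ solves exactly the coupled system \eqref{eq:2.7}, the right-hand side being linear functionals assembled from $\mathbf{r}_{\lambda}$ and from $\mathbf{J}_{\eta,\lambda}|_{\Omega_{s}/\Omega_{s,\eta}}$ and therefore of dual norm $\le C\lambda^{-1/2}$. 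The well-posedness of \eqref{eq:2.7} (continuous dependence on the data) then yields $\|\mathbf{e}_{\lambda}\|_{\mathbf{H}(\mathbf{curl};\Omega)}+\|\mathbf{k}_{\lambda}\|_{\mathbf{H}(\mathbf{div};\Omega_{s,\eta})}\le C\lambda^{-1/2}$, whence the three bounds in the theorem, since $\mathbf{E}_{\eta}-\mathbf{E}_{\eta,\lambda}$ and $\mathbf{E}-\mathbf{E}_{\lambda}$ are restrictions of $-\mathbf{e}_{\lambda}$, and $\mathbf{J}_{\eta}-\mathbf{J}_{\eta,\lambda}=-(\mathbf{k}_{\lambda}+\mathbf{r}_{\lambda})$ on $\Omega_{s,\eta}$ (the only set on which $\mathbf{J}_{\eta}$ is defined, which is why no $\mathbf{H}(\mathbf{div})$-norm of the current over $\Omega_{s}/\Omega_{s,\eta}$ enters the estimate).

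I expect the main obstacle to be the uniform-in-$\lambda$ bound $\|\mathbf{E}_{\eta,\lambda}\|_{\mathbf{L}^{2}(\Omega_{s})}\le C$ used in Step~1, i.e.\ uniform well-posedness of \eqref{eq:2.9} for large $\lambda$: the coefficient $\beta^{2}_{\eta,\lambda}=\lambda$ destroys any naive coercivity of the sesquilinear form on $\mathbf{H}(\mathbf{div})$. I would prove it via the Fredholm alternative — a \emph{uniform} G\aa rding inequality obtained after a Helmholtz decomposition of the current (the scalar-potential part solving an elliptic problem whose coercivity the growing $\beta^{2}_{\eta,\lambda}$ only improves, the solenoidal part being controlled algebraically by $\mathbf{E}_{\eta,\lambda}$ through the lower-bounded factor $\omega(\omega+\mathrm{i}\gamma_{\eta,\lambda})$), together with uniform uniqueness by contradiction: a normalized sequence of solutions with $\lambda_{k}\to\infty$ and vanishing data would, after extracting strongly convergent subsequences via the compact embeddings of the relevant $\mathbf{H}(\mathbf{curl})\cap\mathbf{H}(\mathbf{div})$-type spaces, converge to a nontrivial homogeneous solution of \eqref{eq:2.7}, contradicting its uniqueness. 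A lighter alternative is a continuation/bootstrap in $\lambda$, feeding the conclusion of Step~2 back into Step~1 once $\lambda$ is large and using the continuous dependence already available for \eqref{eq:2.7}.
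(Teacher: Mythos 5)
Your proposal is correct and follows the same architecture as the paper's proof: (i) a decay lemma for $\mathbf{J}_{\eta,\lambda}$ in $\Omega_{s}/\Omega_{s,\eta}$ obtained by testing with $\mathbf{J}_{\eta,\lambda}$ and separating real and imaginary parts, (ii) an $H^{-1/2}$ bound on the normal trace and a lifting $\mathbf{w}$ (your $\mathbf{r}_{\lambda}$; the paper builds it by solving a Neumann problem and setting $\mathbf{w}=\mathbf{grad}\,\phi$) so that $\mathbf{j}=\mathbf{J}_{\eta,\lambda}-\mathbf{J}_{\eta}-\mathbf{w}$ has vanishing normal trace, and (iii) stability of the resulting error system with $O(\lambda^{-1/2})$ sources, carried out in the paper by repeating the imaginary-part energy argument and then invoking the Maxwell stability estimate (Theorem~4.17 of Monk) for the $\mathbf{curl}$ part and the analogue of Theorem~3.1 of the cited well-posedness paper for the $\mathbf{div}$ part. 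The one place where you diverge is the step you yourself flag as the main obstacle: the uniform-in-$\lambda$ bound on $\mathbf{E}_{\eta,\lambda}$. You keep $\Vert\mathbf{E}_{\eta,\lambda}\Vert_{\mathbf{L}^{2}(\Omega_{s})}$ on the right-hand side of the decay estimate and propose to close the loop via a uniform G\aa rding inequality plus a compactness/contradiction argument or a bootstrap. The paper avoids all of this by a cancellation: adding $\omega_{p}^{2}\varepsilon_{0}/\omega$ times the imaginary part of the Maxwell identity to the imaginary part of the current identity makes the cross terms $\mathrm{Re}(\mathbf{J}_{\eta,\lambda},\mathbf{E}_{\eta,\lambda})_{s}$ cancel exactly, yielding
\begin{equation*}
\big(\gamma_{\eta,\lambda}\mathbf{J}_{\eta,\lambda},\mathbf{J}_{\eta,\lambda}\big)_{s}+\omega_{p}^{2}\varepsilon_{0}\Vert\mathbf{E}_{\eta,\lambda,T}\Vert^{2}_{\mathbf{L}^{2}(\partial\Omega)}\leq C\Vert\mathbf{g}\Vert^{2}_{\mathbf{L}^{2}(\partial\Omega)}
\end{equation*}
with $C$ independent of $\lambda$; this gives at once the $\lambda$-weighted decay in the dielectric \emph{and} a uniform $\mathbf{L}^{2}(\Omega_{s})$ bound on the current, after which the Maxwell stability estimate (whose constant does not involve $\lambda$) bounds $\mathbf{E}_{\eta,\lambda}$ uniformly. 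So the Fredholm and compactness machinery you anticipate is unnecessary, and your plan would go through once you replace that step by this direct coupled energy identity.
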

\begin{rem}
Theorem~\ref{thm:2.1} indicates that the error between the original system and the extended system decreases with increasing extension parameter $\lambda$.
\end{rem}

The proof of Theorem~\ref{thm:2.1} is given in the Appendix.

\section{Homogenization and Multiscale asymptotic method}\label{sec-3}
In this section, we derive the homogenized system and define the multiscale approximate solutions for the extended system (\ref{eq:2.9}) by using the technique of multiscale asymptotic expansion. For brevity, we omit the dependence of the electric field and the polarization current on the parameter $\lambda$ and write $({\bf E}_{\eta,\lambda},{\bf J}_{\eta,\lambda})$ as $({\bf E}_{\eta},{\bf J}_{\eta})$ in what follows. We consider the homogenization of the following equations
 \begin{equation}\label{eq:2-6}
\left\{
\begin{array}{@{}l@{}}
    {\displaystyle  \mathbf{curl}\, (\mu_{\eta}^{-1}\mathbf{curl}\, {\bf E}_{\eta}) - \varepsilon_{\eta} \omega^{2} {\bf E}_{\eta} = {\mathrm {{i}}}\omega {\bf J}_{\eta},}\\[2mm]
    {\displaystyle \gamma^{\ast}_{\eta}{\bf J}_{\eta} + \mathbf{grad}\,(\beta^{\ast}_{\eta}\mathbf{div}\, {\bf J}_{\eta}) = {\rm i}\omega {\bf E}_{\eta},}
\end{array}
\right.
\end{equation}
where
\begin{equation}\label{eq:2-6-0}
\mu_{\eta} = \mu({\bf x} / \eta),\quad \varepsilon_{\eta} = \varepsilon({\bf x} / \eta), \quad \gamma^{\ast}_{\eta} = \frac{\omega(\omega+{\rm i}\gamma({\bf x} / \eta))}{\omega_{p}^{2}\varepsilon_{0}},\quad \beta^{\ast}_{\eta} = \frac{\beta^{2}({\bf x} / \eta)}{\omega_{p}^{2}\varepsilon_{0}},
\end{equation}
and $\mu({\bf y})$, $\varepsilon({\bf y})$, $\gamma({\bf y})$, $\beta({\bf y})$ are $Y$-periodic functions. We want to study the behaviour of $({\bf{E}}_{\eta},\bf J_{\eta})$ as $\eta\to 0$.

For the sake of simplicity, we introduce the operators
\begin{equation*}
\begin{array}{@{}c@{}}
    \mathcal{A}^{\eta}= {\mathbf{curl}}\, \mu_{\eta}^{-1}\,{\mathbf{curl}} - \varepsilon_{\eta} \omega^{2},\quad
    \mathcal{B}^{\eta}= {\mathbf{grad}}\, \beta^{\ast}_{\eta}\,{\mathbf{div}} +\gamma^{\ast}_{\eta}.
\end{array}
\end{equation*}
We look for a formal asymptotic expansion of the form
 \begin{equation}\label{eq:2-8-2}
\begin{array}{@{}l@{}}
{\bf{E}}_{\eta}={\bf{E}}_0({\bf x},{\bf x}/\eta)+
    \eta{\bf{E}}_1({\bf x},{\bf x}/\eta)+\eta^2{\bf{E}}_2({\bf x},{\bf x}/\eta)+...,\\[2mm]
{\bf{J}}_{\eta}={\bf{J}}_0({\bf x},{\bf x}/\eta)+
\eta{\bf{J}}_1({\bf x},{\bf x}/\eta)+\eta^2{\bf{J}}_2({\bf x},{\bf x}/\eta)+...,
\end{array}
\end{equation}
where for $k=0,1,\cdots$, ${\bf{E}}_k({\bf x},{\bf y})$ and ${\bf{J}}_k({\bf x},{\bf y})$ are functions of both variables ${\bf x}$ and ${\bf y}$ and $Y$-periodic with respect to ${\bf y}$.

Let ${\bm \Psi} = {\bm \Psi}({\bf x}, {\bf y})\in \mathbb{R}^{3}$ be a function depending on two variables of $\mathbb{R}^{3}$ and denote by ${\bm \Psi}_{\eta} = {\bm \Psi}({\bf x}, {\bf x}/\eta)$. By using the chain rule, we have
\begin{equation*}
\begin{array}{lll}
{\displaystyle \mathbf{curl}\,{\bm \Psi}_{\eta}({\bf x}) = \big[\mathbf{curl}_{\bf x}\,{\bm \Psi}+\frac{1}{\eta}\mathbf{curl}_{\bf y}\,{\bm \Psi}\big]({\bf x}, \frac{\bf x}{\eta}) }\\[2mm]
{\displaystyle \mathbf{div}\,{\bm \Psi}_{\eta}({\bf x}) = \big[\mathbf{div}_{\bf x}\,{\bm \Psi}+\frac{1}{\eta}\mathbf{div}_{\bf y}\,{\bm \Psi}\big]({\bf x}, \frac{\bf x}{\eta})}.
\end{array}
\end{equation*}
With this in mind, we can write $\mathcal{A}^{\eta}{\bm \Psi}_{\eta}$ and $\mathcal{B}^{\eta}{\bm \Psi}_{\eta}$ as follows:
\begin{equation}\label{eq:2-8-0-0}
\begin{array}{lll}
{\displaystyle \mathcal{A}^{\eta}{\bm \Psi}_{\eta}({\bf x})= \big[\big(\eta^{-2}\mathcal{A}_1+\eta^{-1}\mathcal{A}_2+\mathcal{A}_3\big){\bm \Psi}\big]({\bf x}, \frac{\bf x}{\eta}),}\\[3mm]
{\displaystyle \mathcal{B}^{\eta}{\bm \Psi}_{\eta}({\bf x})= \big[\big(\eta^{-2}\mathcal{B}_1+\eta^{-1}\mathcal{B}_2+\mathcal{B}_3\big){\bm \Psi}\big]({\bf x}, \frac{\bf x}{\eta}),}
\end{array}
\end{equation}
where
 \begin{equation}\label{eq:2-8-0}
 \left\{
\begin{array}{@{}l@{}}
    {\displaystyle \mathcal{A}_1=\mathbf{curl}_{\bf y}\,\mu^{-1}({\bf y})\,\mathbf{curl}_{\bf y}}\\[2mm]
    {\displaystyle \mathcal{A}_2=\mathbf{curl}_{\bf x}\,\mu^{-1}({\bf y})\,\mathbf{curl}_{\bf y}+\mathbf{curl}_{\bf y}\,\mu^{-1}({\bf y})\,\mathbf{curl}_{\bf x}}\\[2mm]
    {\displaystyle \mathcal{A}_3=\mathbf{curl}_{\bf x}\,\mu^{-1}({\bf y})\,\mathbf{curl}_{\bf x} - \varepsilon({\bf y})\omega^{2},}
\end{array}
\right.
\end{equation}
and
 \begin{equation}\label{eq:2-8-1}
\left\{
\begin{array}{@{}l@{}}
    {\displaystyle \mathcal{B}_1=\mathbf{grad}_{\bf y}\,\beta^{\ast}({\bf y})\,\mathbf{div}_{\bf y}}\\[2mm]
    {\displaystyle \mathcal{B}_2=\mathbf{grad}_{\bf x}\,\beta^{\ast}({\bf y})\,\mathbf{div}_{\bf y}+\mathbf{grad}_{\bf y}\,\beta^{\ast}({\bf y})\,\mathbf{div}_{\bf x}}\\[2mm]
    {\displaystyle \mathcal{B}_3=\mathbf{grad}_{\bf x}\,\beta^{\ast}({\bf y})\,\mathbf{div}_{\bf x} +\gamma^{\ast}({\bf y}).}
\end{array}
\right.
\end{equation}
Next substituting the expansions (\ref{eq:2-8-2}) into the equations (\ref{eq:2-6}), using (\ref{eq:2-8-0-0})-(\ref{eq:2-8-1}) and equating the power-like terms of $\eta$, we get the following equations
 \begin{equation}\label{eq:2-9-0}
\begin{array}{@{}l@{}}
    {\displaystyle \eta^{-2}:\qquad \mathcal{A}_1{\bf{E}}_0=\bf{0}}\\[2mm]
    {\displaystyle \eta^{-1}:\qquad\mathcal{A}_1{\bf{E}}_1+\mathcal{A}_2{\bf{E}}_0=
    \bf{0}}\\[2mm]
    {\displaystyle \;\,\eta^{0}:\qquad\mathcal{A}_1{\bf{E}}_2+
    \mathcal{A}_2{\bf{E}}_1+\mathcal{A}_3{\bf{E}}_0={\rm i} \omega{\bf{J}}_0,
    }
\end{array}
\end{equation}
and
 \begin{equation}\label{eq:2-9-1}
\begin{array}{@{}l@{}}
    {\displaystyle \eta^{-2}:\qquad\mathcal{B}_1{\bf{J}}_0=\bf{0}}\\[2mm]
    {\displaystyle \eta^{-1}:\qquad\mathcal{B}_1{\bf{J}}_1+\mathcal{B}_2{\bf{J}}_0=
    \bf{0}}\\[2mm]
    {\displaystyle \;\,\eta^{0}:\qquad\mathcal{B}_1{\bf{J}}_2+
    \mathcal{B}_2{\bf{J}}_1+\mathcal{B}_3{\bf{J}}_0={\rm i}\omega {\bf{E}}_0.
    }
\end{array}
\end{equation}
Since ${\bf x}$ and ${\bf y}$ are considered as independent variables, equations (\ref{eq:2-9-0})-(\ref{eq:2-9-1}) are PDEs in ${\bf y}$ while ${\bf x}$ plays the role of a parameter.

Note that
\begin{equation*}
\begin{array}{lll}
{\displaystyle (\mathcal{A}_1{\bf E}_0,{\bf{E}}_0)_{Y}=(\mu^{-1}({\bf y})\mathbf{curl}_{\bf y}\,{\bf{E}}_0,\mathbf{curl}_{\bf y}\,{\bf{E}}_0)_{Y},}\\[2mm]
{\displaystyle (\mathcal{B}_1{\bf J}_0,{\bf{J}}_0)_{Y}=(\beta^{\ast}({\bf y})\mathbf{div}_{\bf y}\,{\bf{J}}_0,\mathbf{div}_{\bf y}\,{\bf{J}}_0)_{Y}. }
\end{array}
\end{equation*}
Consequently, the first equations of (\ref{eq:2-9-0}) and (\ref{eq:2-9-1}) are equivalent to
\begin{equation}\label{eq:2-10-1}
    \begin{array}{@{}l@{}}
        {\displaystyle \mathbf{curl}_{\bf y}\,{\bf{E}}_0={\bf 0},\quad \mathbf{div}_{\bf y}\,{\bf J}_0}=\bf{0},
    \end{array}
\end{equation}
respectively. Using (\ref{eq:2-10-1}), we can reduce the second equations of (\ref{eq:2-9-0}) and (\ref{eq:2-9-1}) to
\begin{equation}\label{eq:2-10-2}
    \begin{array}{@{}l@{}}
        {\displaystyle \mathbf{curl}_{\bf y}\,(\mu^{-1}({\bf y})\,\mathbf{curl}_{\bf y}\,{\bf{E}}_1+\mu^{-1}({\bf y}) \mathbf{curl}_{\bf x}\,{\bf{E}}_0)={\bf 0},}\\[2mm]
        {\displaystyle \mathbf{grad}_{\bf y}\,(\beta^{\ast}({\bf y})\,\mathbf{div}_{\bf y}\,{\bf{J}}_1+\beta^{\ast}({\bf y})\mathbf{div}_{\bf{x}}\,{\bf{J}}_0)=\bf{0}.}
    \end{array}
\end{equation}
Applying $\mathbf{div}_{\bf y}$ and $\mathbf{curl}_{\bf y}$ to the third equations of (\ref{eq:2-9-0}) and (\ref{eq:2-9-1}), respectively, we have
\begin{equation}\label{eq:2-10-3}
    \begin{array}{@{}l@{}}
        {\displaystyle \mathbf{div}_{\bf y}\,\mathbf{curl}_{\bf x}(\mu^{-1}({\bf y})\,\mathbf{curl}_{\bf y}{\bf{E}}_1+\mu^{-1}({\bf y})\,\mathbf{curl}_{\bf x}{\bf{E}}_0) -\omega^2\mathbf{div}_{\bf y}(\varepsilon({\bf y}){\bf{E}}_0)}\\[2mm]
        {\displaystyle  \qquad ={\rm i}\omega\,\mathbf{div}_{\bf y}{\bf{J}}_0={\bf 0},}\\[2mm]

        {\displaystyle \mathbf{curl}_{\bf y}\,\mathbf{grad}_{\bf x}(\beta^{\ast}({\bf y})\,\mathbf{div}_{\bf y}{\bf{J}}_1+\beta^{\ast}({\bf y})\,\mathbf{div}_{\bf x}{\bf{J}}_0)+\mathbf{curl}_{\bf y}\big(\gamma^{\ast}({\bf y})\,{\bf{J}}_0 \big)}\\[2mm]
        {\displaystyle \qquad ={\rm i}\omega\,\mathbf{curl}_{\bf y}{\bf{E}}_0={\bf 0},}
    \end{array}
\end{equation}
where we have used (\ref{eq:2-10-1}). Since $\mathbf{div}_{\bf y}\,\mathbf{curl}_{\bf x}=-\mathbf{div}_{\bf{x}}\,\mathbf{curl}_{\bf y}$ and
$\mathbf{curl}_{\bf y}\,\mathbf{grad}_{\bf x}=-\mathbf{curl}_{\bf x}\,\mathbf{grad}_{\bf y}$, combining (\ref{eq:2-10-2}) and (\ref{eq:2-10-3}), we obtain
\begin{equation}\label{eq:2-10-4}
    \begin{array}{@{}l@{}}
       \mathbf{div}_{\bf y}(\varepsilon({\bf y}){\bf{E}}_0)=\mathbf{curl}_{\bf y}\big(\gamma^{\ast}({\bf y})\,{\bf{J}}_0\big)={\bf 0}.
    \end{array}
\end{equation}
Denote by
\begin{equation}\label{eq:2-10-5}
    \begin{array}{@{}l@{}}
  {\displaystyle       {\bf {w}}_1=\mu^{-1}({\bf y})\,\mathbf{curl}_{\bf y}\,{\bf{E}}_1+\mu^{-1}({\bf y}) \mathbf{curl}_{\bf x}\,{\bf{E}}_0,}\\[2mm]
  {\displaystyle {w}_2=\beta^{\ast}({\bf y})\,\mathbf{div}_{\bf y}\,{\bf{J}}_1+\beta^{\ast}({\bf y})\mathbf{div}_{\bf{x}}\,{\bf{J}}_0 },
    \end{array}
\end{equation}
and
\begin{equation}\label{eq:2-10-6}
    \begin{array}{@{}l@{}}
        \mathcal{M}_{Y}({\bf {w}}_1)=\tilde{{\bf {w}}}_1,\,\quad
        \mathcal{M}_{Y}({{w}}_2)=\tilde{{{w}}}_2,\\[2mm]
        \mathcal{M}_{Y}({\bf {E}}_0)=\tilde{{\bf {E}}}_0,\,\quad
        \mathcal{M}_{Y}({\bf {J}}_0)=\tilde{{\bf {J}}}_0,
    \end{array}
\end{equation}
where $\mathcal{M}_{Y}(f) = \frac{1}{|Y|} \int_{Y} f({\bf y})\,d{\bf y}$ denotes the mean value of $f$ over the reference cell $Y$.

It follows from (\ref{eq:2-10-1})-(\ref{eq:2-10-2}) and the definition of ${\bf {w}}_1,{{w}}_2$ that
\begin{equation}\label{eq:2-10-7}
    \begin{array}{@{}l@{}}
        \mathbf{curl}_{\bf y} \,{\bf {w}}_1={\bf 0},\, \quad \mathbf{grad}_{\bf y}\, {w}_2 ={\bf 0},\\[2mm]
        \mathbf{div}_{\bf y} \,(\mu({\bf y}) {\bf {w}}_1)= \mathbf{div}_{\bf y}\,\mathbf{curl}_{\bf x}\,{\bf{E}}_0= -\mathbf{div}_{\bf{x}}\,\mathbf{curl}_{\bf y}\,{\bf{E}}_0={\bf 0}.
    \end{array}
\end{equation}
Therefore, we see that $\mathbf{curl}_{\bf y}\,( {\bf {w}}_1-\tilde{{\bf w}}_1)={\bf 0}$ and $\mathcal{M}_{Y}({\bf {w}}_1-\tilde{\bf w}_1)={\bm 0}$, which imply that there exists a $Y$-periodic function ${\psi}({\bf x},{\bf y})$ such that
\begin{equation}\label{eq:2-10-8}
    \begin{array}{@{}l@{}}
        {\bf {w}}_1-\tilde{{\bf w}}_1=\mathbf{grad}_{\bf y}\, {\psi}.
    \end{array}
\end{equation}
Similarly, in view of (\ref{eq:2-10-1}), there exists $Y$-periodic functions ${\phi}({\bf x},{\bf y})$ and ${\bm \varphi}({\bf x},{\bf y})$ such that
\begin{equation}\label{eq:2-10-8-1}
    \begin{array}{@{}l@{}}
        {\bf {E}}_0-\tilde{{\bf E}}_0=\mathbf{grad}_{\bf y}\, {\phi},\quad
        {\bf {J}}_0-\tilde{{\bf J}}_0=\mathbf{curl}_{\bf y}\, {\bm \varphi}.
    \end{array}
\end{equation}
In addition, since ${\bf grad}_{\bf y}\,w_{2} = {\bf 0}$, we see that $w_{2} = \tilde{w}_{2}$. Substituting (\ref{eq:2-10-8}) into the last equation of (\ref{eq:2-10-7}) gives
\begin{equation}\label{eq:2-10-9}
    \begin{array}{@{}l@{}}
       {\displaystyle  \mathbf{div}_{\bf y}\,\big(\mu({\bf y})\,\mathbf{grad}_{\bf y}\,{\psi}\big)=- \mathbf{div}_{\bf y}\,(\mu({\bf y}) \tilde{{\bf w}}_1) = -\sum_{i=1}^{3}(\tilde{{\bf w}}_1)_{i}\, \mathbf{div}_{\bf y}\,(\mu({\bf y}) {\bf e}_{i})},
    \end{array}
\end{equation}
where $(\tilde{{\bf w}}_1)_{i}$ denotes the $i$-th component of $\tilde{{\bf w}}_1$ and $\{ {\bf e}_1, {\bf e}_2, {\bf e}_3\} = \{(1,0,0)^T,(0,1,0)^T, (0,0,1)^T\}$ is the canonical basis of $\mathbb{R}^{3}$.
Similarly, we deduce from (\ref{eq:2-10-8-1}) and (\ref{eq:2-10-4}) that
\begin{equation}\label{eq:2-10-9-1}
    \begin{array}{@{}l@{}}
   {\displaystyle      \mathbf{div}_{\bf y}\,\big(\varepsilon({\bf y})\,\mathbf{grad}_{\bf y}\,{ \phi}\big)=- \mathbf{div}_{\bf y}\,(\varepsilon({\bf y})\tilde{{\bf E}}_0) = -\sum_{i=1}^{3}(\tilde{{\bf E}}_0)_{i} \, \mathbf{div}_{\bf y}\,(\varepsilon({\bf y}) {\bf e}_{i}),}\\[4mm]
      {\displaystyle  \mathbf{curl}_{\bf y} \big(\gamma^{\ast}({\bf y})\mathbf{curl}_{\bf y}\,{\bm \varphi}\big) =- \mathbf{curl}_{\bf y}\,\big(\gamma^{\ast}({\bf y}) \tilde{{\bf J}}_0\big) = - \sum_{i=1}^{3} (\tilde{{\bf J}}_0)_{i} \, \mathbf{curl}_{\bf y}\,\big(\gamma^{\ast}({\bf y})\mathbf{e}_{i}\big).}
    \end{array}
\end{equation}
To proceed further, we introduce the scalar-valued cell functions $\theta_i^{\mu}({\bf y})$, $\theta_i^{\varepsilon}({\bf y})$, and the vector-valued cell functions $\Theta_i^{\gamma}({\bf y})$, $i=1,2,3$, which are the solutions of the following problems:
\begin{equation}\label{eq:2-10-10}
\left\{
    \begin{array}{@{}l@{}}
      {\displaystyle   \mathbf{div}_{\bf y}\,(\mu({\bf y})\,\mathbf{grad}_{\bf y}\,\theta_i^{\mu})=- \mathbf{div}_{\bf y}\,(\mu({\bf y}){\bf e}_i),\quad {\rm in} \; \;Y}\\[2mm]
      {\displaystyle \theta_i^{\mu}\quad Y\,{\rm -}\,{\rm periodic}, }
    \end{array}
    \right.
\end{equation}

\begin{equation}\label{eq:2-10-10-1}
\left\{
    \begin{array}{@{}l@{}}
      {\displaystyle   \mathbf{div}_{\bf y}\,(\varepsilon({\bf y})\,\mathbf{grad}_{\bf y}\,\theta_i^{\varepsilon})=- \mathbf{div}_{\bf y}\,(\varepsilon({\bf y}){\bf e}_i),\quad {\rm in} \; \;Y}\\[2mm]
      {\displaystyle \theta_i^{\varepsilon}\quad Y\,{\rm -}\,{\rm periodic}, }
    \end{array}
    \right.
\end{equation}
and
\begin{equation}\label{eq:2-10-10-2}
\left\{
    \begin{array}{@{}l@{}}
      {\displaystyle   \mathbf{curl}_{\bf y}\,(\gamma^{\ast}({\bf y})\,\mathbf{curl}_{\bf y}\,\Theta_i^{\gamma})=- \mathbf{curl}_{\bf y}\,(\gamma^{\ast}({\bf y}){\bf e}_i),\quad {\rm in} \; \;Y}\\[2mm]
      {\displaystyle \mathbf{div}_{\bf y}\,\Theta_i^{\gamma} = 0 ,\quad {\rm in} \; \;Y}\\[2mm]
      {\displaystyle \Theta_i^{\gamma}\quad Y\,{\rm -}\,{\rm periodic}, }
    \end{array}
    \right.
\end{equation}
respectively.

Next we define the vectors $\bm{\theta}^{\mu}=(\theta_1^{\mu},\theta_2^{\mu},\theta_3^{\mu})$, $\bm{\theta}^{\eta}=(\theta_1^{\eta},\theta_2^{\eta},\theta_3^{\eta})$, and the matrix $\bm{\Theta}^{\gamma}=\{\Theta_1^{\gamma},\Theta_2^{\gamma},\Theta_3^{\gamma}\}$. Applying the superposition principle to the equations (\ref{eq:2-10-9}) and (\ref{eq:2-10-9-1}), we can write $\psi$, $\phi$, and ${\bm \varphi}$ as
\begin{equation}\label{eq:2-10-11-0}
    \begin{array}{@{}l@{}}
     {\displaystyle    \psi = \sum_{i=1}^{3}(\tilde{{\bf w}}_1)_{i} \,\theta_i^{\mu} =\bm{\theta}^{\mu}\cdot \tilde{{\bf w}}_1,\quad \phi = \sum_{i=1}^{3}(\tilde{{\bf E}}_0)_{i} \,\theta_i^{\mu} =\bm{\theta}^{\varepsilon}\cdot \tilde{{\bf E}}_0},\\[2mm]
       {\displaystyle\qquad \qquad \quad  {\bm \varphi} = \sum_{i=1}^{3}(\tilde{{\bf J}}_0)_{i} \,\Theta_i^{\gamma} =\bm{\Theta}^{\gamma}\cdot \tilde{{\bf J}}_0,}
    \end{array}
\end{equation}
respectively. Combining (\ref{eq:2-10-8}), (\ref{eq:2-10-8-1}), and (\ref{eq:2-10-11-0}), we obtain
\begin{equation}\label{eq:2-10-11}
    \begin{array}{@{}l@{}}
        {\bf {w}}_1=(I+\mathbf{grad}_{\bf y}\,{\bm \theta^{\mu}})\tilde{{\bf w}}_1,\quad
        {\bf {E}}_0=(I+\mathbf{grad}_{\bf y}\,{\bm \theta}^{\varepsilon})\tilde{{\bf E}}_0,\\[3mm]
        {\bf \qquad \qquad \qquad {J}}_0=(I+\mathbf{curl}_{\bf y}\,{\bm \Theta^{\gamma}})\tilde{{\bf J}}_0.
    \end{array}
\end{equation}
Integrating the two equations in (\ref{eq:2-10-5}) over the reference cell $Y$ after multiplying them by $\mu({\bf y})$ and $1/\beta^{\ast}({\bf y})$, respectively, it follows that
\begin{equation}\label{eq:2-10-12}
    \begin{array}{@{}l@{}}
        \mathbf{curl}_{\bf x}\,\tilde{{\bf E}}_0 = \mathcal{M}_{Y}(\mu({\bf y}){\bf {w}}_1) = \mathcal{M}_{Y}\big(\mu({\bf y})(I+\mathbf{grad}_{\bf y}\,{\bm \theta}^{\mu}) \big)\tilde{{\bf w}}_1, \\[2mm]
        \mathbf{div}_{\bf{x}}\,\tilde{{\bf J}}_0 =\mathcal{M}_{Y}(\frac{1}{\beta^{\ast}({\bf y})} w_{2}) = \mathcal{M}_{Y}(\frac{1}{\beta^{\ast}({\bf y})})\tilde{w}_{2},
    \end{array}
\end{equation}
where we have used (\ref{eq:2-10-11}) and the fact that $w_{2} = \tilde{w}_{2}$. Similarly, applying $\mathcal{M}_{Y}$ to the third equations of (\ref{eq:2-9-0}) and (\ref{eq:2-9-1}), respectively, recalling the definition of ${\bf w}_{1}$ and $w_{2}$, and using (\ref{eq:2-10-11}), we arrive at
\begin{equation}\label{eq:2-10-13}
    \begin{array}{@{}l@{}}
        \mathbf{curl}_{\bf x} \,\tilde{\bf {w}}_1 -\omega^2 \mathcal{M}_{Y}\big(\varepsilon({\bf y})(I+\mathbf{grad}\,{\bm \theta^{\eta}})\big)\tilde{{\bf E}}_0={\rm i}\omega \tilde{{\bf J}}_0,\\[2mm]
        \mathbf{grad}_{\bf{x}} \,\tilde{{w}}_2+\mathcal{M}_{Y}\big(\gamma^{\ast}({\bf y})(I+\mathbf{curl}_{\bf y}\,{\bm \Theta^{\gamma}})\big)\tilde{{\bf J}}_0={\rm i}\omega\tilde{{\bf E}}_0.
    \end{array}
\end{equation}
Substituting (\ref{eq:2-10-12}) into (\ref{eq:2-10-13}), we obtain the homogenized system associated to (\ref{eq:2-6})
\begin{equation}\label{eq:2-10-14}
\left\{
    \begin{array}{@{}l@{}}
        \mathbf{curl}_{\bf x}\,\big(\widehat{\mu}^{-1}\,\mathbf{curl}_{\bf x}\,\tilde{{\bf E}}_0\big)  -\omega^2 \widehat{\varepsilon}\,\tilde{{\bf E}}_0={\rm i}\omega \tilde{{\bf J}}_0,\\[2mm]
        \mathbf{grad}_{\bf{x}}\, \big(\widehat{\beta^{\ast}}\, \mathbf{div}_{\bf{x}}\,\tilde{{\bf J}}_0\big)+\widehat{\gamma^{\ast}}\,\tilde{{\bf J}}_0={\rm i}\omega\tilde{{\bf E}}_0,
    \end{array}
    \right.
\end{equation}
where the homogenized coefficients are given by
\begin{equation}\label{eq:2-10-14-0}
    \begin{array}{@{}l@{}}
        \widehat{\mu}=\mathcal{M}_{Y}(\mu({\bf y})(I+\mathbf{grad}_{\bf y}\,{\bm \theta}^{\mu})),\quad \widehat{\varepsilon}=\mathcal{M}_{Y}(\varepsilon({\bf y})(I+\mathbf{grad}_{\bf y}\,{\bm \theta}^{\varepsilon})),\\[3mm]
         \widehat{\gamma^{\ast}}=\mathcal{M}_{Y}(\gamma^{\ast}({\bf y})(I+\mathbf{curl}_{\bf y}\,{\bm \Theta^{\gamma}})),\quad \widehat{\beta^{\ast}}= \mathcal{M}_{Y}(\frac{1}{\beta^{\ast}({\bf y})})^{-1},
    \end{array}
\end{equation}
respectively, and the cell functions $\bm{\theta}^{\mu}=(\theta_1^{\mu},\theta_2^{\mu},\theta_3^{\mu})$, $\bm{\theta}^{\eta}=(\theta_1^{\eta},\theta_2^{\eta},\theta_3^{\eta})$, and $\bm{\Theta}^{\gamma}=\{\Theta_1^{\gamma},\Theta_2^{\gamma},\Theta_3^{\gamma}\}$ are defined in (\ref{eq:2-10-10})-(\ref{eq:2-10-10-2}), respectively. Note that although $\mu({\bf y})$, $\varepsilon({\bf y})$, and $\gamma^{\ast}({\bf y})$ are scalar-valued functions, in general, the homogenized coefficients $\widehat{\mu}$, $\widehat{\varepsilon}$, and $\widehat{\gamma^{\ast}}$ are non-diagonal matrices.

In addition, we define the multiscale approximate solution for $({\bf{E}}_{\eta},\bf J_{\eta})$ as follows:
\begin{equation}\label{eq:2-10-15}
\left\{
    \begin{array}{@{}l@{}}
        \tilde{\bf {E}}_{0,\eta}=(I+\mathbf{grad}_{\bf y}\,{\bm \theta^{\varepsilon}})\,\tilde{{\bf E}}_0,\\[2mm]
        \tilde{\bf {J}}_{0,\eta}=(I+\mathbf{curl}_{\bf y}\,{\bm \Theta^{\gamma}})\,\tilde{{\bf J}}_0.
    \end{array}
   \right.
\end{equation}
It is worth pointing out that the homogenized system and the multiscale approximate solution are only defined in the domain of the scatter. The homogenized system (\ref{eq:2-10-14}) is coupled to Maxwell's equations defined in the exterior domain with constant coefficients and forms the homogenized coupled system
 \begin{equation}\label{eq:2-10-14-1}
\left\{
\begin{array}{@{}l@{}}
    {\displaystyle \mathbf{curl}\,\big(\widehat{\mu}^{-1}\,\mathbf{curl}\,\tilde{{\bf E}}_0\big)  -\omega^2 \widehat{\varepsilon}\,\tilde{{\bf E}}_0-{\rm i}\omega \tilde{{\bf J}}_0 = {\bf 0}, \;\quad \;{\rm in} \;\; \Omega_{s},}\\[2mm]
  {\displaystyle \mathbf{grad} \,\big( \widehat{\beta^{\ast}}\, \mathbf{div}\,\tilde{{\bf J}}_0\big)+\widehat{\gamma^{\ast}}\,\tilde{{\bf J}}_0-{\rm i}\omega\tilde{{\bf E}}_0 = {\bf 0},\qquad \quad {\rm in} \;\;\Omega_{s},}\\[2mm]
 {\displaystyle \mathbf{curl}\,(\mu_0^{-1}\,\mathbf{curl}\,\mathbf{E})-\varepsilon_0\omega^2\,\mathbf{E}=\mathbf{0},  \qquad \qquad \; \;\quad {\rm in}\;\; \Omega/\Omega_{s}, } \\[2mm]
 {\displaystyle \mathbf{n}\times (\widehat{\mu}^{-1}\mathbf{curl}\,\tilde{\mathbf{E}}_{0})=\mathbf{n}\times (\mu_0^{-1}\mathbf{curl}\,\mathbf{E}),\,\qquad {\rm on}\;\; \partial \Omega_{s}, }\\[2mm]
 {\displaystyle \mathbf{n}\times \tilde{\mathbf{E}}_{0}=\mathbf{n}\times \mathbf{E},\qquad \qquad \quad \qquad \qquad \;\qquad {\rm on}\; \;\partial \Omega_{s}, }\\[2mm]
 {\displaystyle  {\bf n}\cdot \tilde{\bf J}_{0}= 0,\qquad \qquad \;\;\, \qquad \qquad \qquad \;\qquad\quad  {\rm on}\;\; \partial \Omega_{s}, }\\[2mm]
 {\displaystyle   (\mu_0^{-1}\,\mathbf{curl}\, {\bf E}) \times {\bf n} - {\mathrm {{i}}}\omega({\bf n}\times{\bf E})\times{\bf n} ={\bf g},\;\;\quad {\rm on}\;\; \partial \Omega.}
\end{array}
\right.
\end{equation}
\begin{rem}
The coefficients of the homogenized system are constant in the domain of the scatter, making the numerical solution of the homogenized problem much easier than the original problem. Physically, the homogenization procedure is equivalent to replacing the heterogeneous scatter comprising of metallic nanostructures and the dielectric medium by an averaging homogeneous scatter as illustrated in Fig~3.1.
\end{rem}

For simplicity, we set
\begin{gather}\label{eq:2.16}
\mu_{\rm eff}
=\left\{
\begin{array}{l}
 \widehat{\mu}, \,  \;\;\quad \rm{in}\;\; \Omega_{s}\\[3mm]
 \mu_{0}, \,  \,\quad \rm{in}\;\; \Omega/\Omega_{s},
\end{array}
\right.
\quad \varepsilon_{\rm eff}
=\left\{
\begin{array}{l}
\widehat{\varepsilon}, \,  \;\,\;\quad \rm{in}\;\; \Omega_{s}\\[3mm]
 \varepsilon_{0}, \,  \;\,\quad \rm{in}\;\; \Omega/\Omega_{s},
\end{array}
\right.
\end{gather}
and denote by $\tilde{{\mathbf{E}}}_{0}$ the electric field on the whole domain $\Omega$. Then, the homogenized coupled system (\ref{eq:2-10-14-1}) can be rewritten as
 \begin{equation}\label{eq:2.18}
\left\{
\begin{array}{@{}l@{}}
    {\displaystyle \mathbf{curl}\,\big({\mu}_{\rm eff}^{-1}\,\mathbf{curl}\,\tilde{{\bf E}}_{0}\big)  -\omega^2{\varepsilon}_{\rm eff} \,\tilde{{\bf E}}_{0}-{\rm i}\omega \tilde{{\bf J}}_0 = {\bf 0}, \;\;\quad \;{\rm in} \;\; \Omega,}\\[3mm]
  {\displaystyle \mathbf{grad} \,\big( \widehat{\beta^{\ast}}\, \mathbf{div}\,\tilde{{\bf J}}_0\big)+\widehat{\gamma^{\ast}}\,\tilde{{\bf J}}_0-{\rm i}\omega \tilde{{\bf E}}_{0} = {\bf 0},\,\quad\qquad \quad {\rm in} \;\;\Omega_{s},}\\[2mm]
 {\displaystyle  {\bf n}\cdot \tilde{\bf J}_{0}= 0,\qquad \qquad \quad\;\, \qquad \qquad \qquad \;\qquad\quad  {\rm on}\;\; \partial \Omega_{s}, }\\[2mm]
 {\displaystyle   (\mu_{\rm eff}^{-1}\,\mathbf{curl}\, \tilde{\bf E}_{0}) \times {\bf n} - {\mathrm {{i}}}\omega({\bf n}\times \tilde{\bf E}_{0})\times{\bf n} ={\bf g},\;\;\quad {\rm on}\;\; \partial \Omega.}
\end{array}
\right.
\end{equation}

\begin{figure}\label{fig:3-1}
\centering
\begin{tikzpicture}
\filldraw[fill=black!15!white,draw=black] (-1.0,-1.0)  rectangle (1.0,1.0);
\foreach \x in {-0.75,-0.25,0.25,0.75}
\foreach \y in {-0.75,-0.25,0.25,0.75}
{
  \filldraw[fill=red!40, draw=red!20!black] (\x,\y) circle (0.125cm);
}
\node at (0,3.0) {Plane wave};
\node at (0,2.5) {excitation};
\draw[thick][->](2,3)--(0.9, 1.35);
\draw[thick](1.0,14.2/6)--(1.8, 11/6);
\draw[thick](1.2,16/6)--(2.0, 12.8/6);
\draw[thick](1.1,15.1/6)--(1.9, 11.9/6);

\draw[dashed] (0,0) circle (2.1cm);

\draw[thick][->](2.2,-1.0)--(4.3, -1.0);
\node[scale=0.7] at (3.3,-0.7) {Homogenization};

\filldraw[fill=red!10!white,draw=black] (5.5,-1.0)  rectangle (7.5,1.0);
\foreach \x in {-0.75,-0.25,0.25,0.75}
\foreach \y in {-0.75,-0.25,0.25,0.75}
{
  \filldraw[fill=red!40, draw=red!20!black] (\x,\y) circle (0.125cm);
}
\node at (6.5,3.0) {Plane wave};
\node at (6.5,2.5) {excitation};
\draw[thick][->](8.5,3)--(7.4, 1.35);
\draw[thick](7.5,14.2/6)--(8.3, 11/6);
\draw[thick](7.7,16/6)--(8.5, 12.8/6);
\draw[thick](7.6,15.1/6)--(8.4, 11.9/6);

\draw[dashed] (6.5,0) circle (2.1cm);

\end{tikzpicture}
\caption{Left: the original problem; right: the homogenized problem.}
\end{figure}
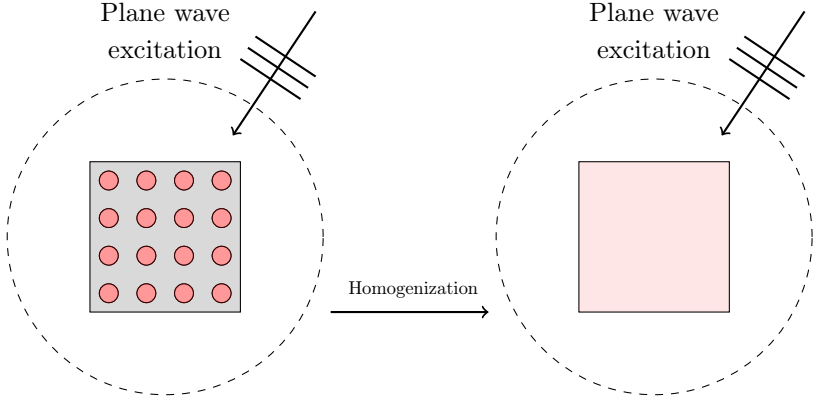

\section{Multiscale approaches and the associated numerical algorithms}\label{sec-4}
Based on the discussion in the previous sections, we have the multiscale approach for solving the problem (\ref{eq:2.7}) consisting of the following steps:

\textbf{Step 1}. Choose a sufficiently large parameter $\lambda>0$, set the coefficients $\gamma_{\eta,\lambda}$ and $\beta^{2}_{\eta,\lambda}$ according to (\ref{eq:2.8}), and extend the equation satisfied by the polarization current into the domain occupied by the dielectric medium.

\vspace{1mm}
\textbf{Step 2}. Solve the equations (\ref{eq:2-10-10})-(\ref{eq:2-10-10-2}) in the reference cell $Y$ to get the cell functions $\theta_i^{\mu}({\bf y})$, $\theta_i^{\varepsilon}({\bf y})$, $\Theta_i^{\gamma}({\bf y})$, $i=1,2,3$, and compute the homogenized coefficients $\widehat{\mu}$, $\widehat{\varepsilon}$, $\widehat{\gamma^{\ast}}$, and $\widehat{\beta^{\ast}}$ given by (\ref{eq:2-10-14-0}).

\vspace{1mm}
\textbf{Step 3}. Solve the homogenized coupled system (\ref{eq:2.18}) in the domain $\Omega$ to get the homogenized solution ($\tilde{\bf E}_{0}$, $\tilde{\bf J}_{0}$).

\vspace{1mm}
\textbf{Step 4}. Compute the multiscale approximate solution (\ref{eq:2-10-15}) by adding some correctors to the homogenized solution.

The proposed multiscale approach reduces the direct solution of the challenging problem (\ref{eq:2.7}) to the solution of several cell problems and a homogenized problem with constant coefficients, which is capable of saving much computational cost especially when there exists a huge number of metallic nanostructures (the parameter $\eta$ is very small).

Numerical results in section~\ref{sec-5} show that the electric field calculated by the multiscale approach is in good agreement with the reference solution (numerical solution of the original system on a very fine mesh) outside the metallic nanostructures. However, it is much less accurate inside the metallic nanostructures.
The main reason for the failure of the multiscale approach inside the metallic nanostructures is that due to the large extension parameter $\lambda$, the imaginary part of the homogenized coefficient $\widehat{\gamma^{\ast}}$ is also very large, making the polarization current rather small and thus the electric field loses the nonlocal information. In fact, we have
\begin{lemma}\label{lem4.1}
Assume that there exists a positive constant $\alpha$ such that the homogenized coefficients $\widehat{\gamma^{\ast}}$ satisfies
\begin{equation}\label{eq:4.0}
{\rm Im}(\widehat{\gamma^{\ast}}) {\bm \xi}\cdot {\bm \xi}\geq \alpha |{\bm \xi}|^{2},\quad \forall {\bm \xi}\in \mathbb{R}^{3}.
\end{equation}
Then the solution of the homogenized coupled system (\ref{eq:2.18}) satisfies
\begin{equation}\label{eq:4.1}
\Vert \tilde{\bf J}_{0} \Vert_{\mathbf{L}^{2}(\Omega_{s})} \leq \frac{C}{\alpha},
\end{equation}
where $C$ is a positive constant independent of $\alpha$.
\end{lemma}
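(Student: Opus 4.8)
The plan is to squeeze an $\mathbf{L}^{2}$-bound on $\tilde{\bf J}_{0}$ directly out of the second equation of the homogenized coupled system (\ref{eq:2.18}), exploiting the dissipative term supplied by $\mathrm{Im}(\widehat{\gamma^{\ast}})$, and then to close the estimate by controlling $\Vert\tilde{\bf E}_{0}\Vert_{\mathbf{L}^{2}(\Omega_{s})}$ by a constant that does not depend on $\alpha$, using the well-posedness of the underlying effective Maxwell problem.

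First I would test the second equation of (\ref{eq:2.18}) against $\overline{\tilde{\bf J}_{0}}$ and integrate over $\Omega_{s}$. Integrating the term $\mathbf{grad}(\widehat{\beta^{\ast}}\,\mathbf{div}\,\tilde{\bf J}_{0})$ by parts, the boundary integral on $\partial\Omega_{s}$ drops out because of the hard-wall condition ${\bf n}\cdot\tilde{\bf J}_{0}=0$, leaving
\[
-\int_{\Omega_{s}}\widehat{\beta^{\ast}}\,|\mathbf{div}\,\tilde{\bf J}_{0}|^{2}\,d{\bf x}+\int_{\Omega_{s}}\widehat{\gamma^{\ast}}\,\tilde{\bf J}_{0}\cdot\overline{\tilde{\bf J}_{0}}\,d{\bf x}={\rm i}\omega\int_{\Omega_{s}}\tilde{\bf E}_{0}\cdot\overline{\tilde{\bf J}_{0}}\,d{\bf x}.
\]
Since $\beta^{\ast}({\bf y})>0$ is real, $\widehat{\beta^{\ast}}=\mathcal{M}_{Y}(1/\beta^{\ast})^{-1}>0$ is a real scalar, so the first term on the left is real; taking imaginary parts, using that $\widehat{\gamma^{\ast}}$ is (complex) symmetric — as is standard for homogenized coefficients, by testing the cell problem (\ref{eq:2-10-10-2}) against itself — so that $\mathrm{Im}(\widehat{\gamma^{\ast}}{\bm v}\cdot\overline{\bm v})=\mathrm{Im}(\widehat{\gamma^{\ast}})\,\mathrm{Re}({\bm v})\cdot\mathrm{Re}({\bm v})+\mathrm{Im}(\widehat{\gamma^{\ast}})\,\mathrm{Im}({\bm v})\cdot\mathrm{Im}({\bm v})$ for every ${\bm v}\in\mathbb{C}^{3}$, and invoking (\ref{eq:4.0}) together with the Cauchy--Schwarz inequality, I obtain
\[
\alpha\,\Vert\tilde{\bf J}_{0}\Vert_{\mathbf{L}^{2}(\Omega_{s})}^{2}\le\mathrm{Im}\Big(\int_{\Omega_{s}}\widehat{\gamma^{\ast}}\,\tilde{\bf J}_{0}\cdot\overline{\tilde{\bf J}_{0}}\,d{\bf x}\Big)=\omega\,\mathrm{Re}\Big(\int_{\Omega_{s}}\tilde{\bf E}_{0}\cdot\overline{\tilde{\bf J}_{0}}\,d{\bf x}\Big)\le\omega\,\Vert\tilde{\bf E}_{0}\Vert_{\mathbf{L}^{2}(\Omega_{s})}\Vert\tilde{\bf J}_{0}\Vert_{\mathbf{L}^{2}(\Omega_{s})},
\]
hence $\Vert\tilde{\bf J}_{0}\Vert_{\mathbf{L}^{2}(\Omega_{s})}\le(\omega/\alpha)\,\Vert\tilde{\bf E}_{0}\Vert_{\mathbf{L}^{2}(\Omega_{s})}$.

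It then remains to bound $\Vert\tilde{\bf E}_{0}\Vert_{\mathbf{L}^{2}(\Omega_{s})}$ by a constant independent of $\alpha$. The first equation of (\ref{eq:2.18}) together with the absorbing boundary condition on $\partial\Omega$ is a time-harmonic Maxwell problem with impedance boundary condition, volume source ${\rm i}\omega\tilde{\bf J}_{0}$ (supported in $\Omega_{s}$) and boundary data ${\bf g}$, whose coefficients $\mu_{\rm eff},\varepsilon_{\rm eff}$ are real, symmetric, positive-definite and — this is the key point — independent of the extension parameter $\lambda$, hence of $\alpha$. By the classical well-posedness theory for this problem (uniqueness follows by taking imaginary parts in the energy identity, which forces the tangential trace of $\tilde{\bf E}_{0}$ and, through the boundary condition, ${\bf n}\times\mathbf{curl}\,\tilde{\bf E}_{0}$ to vanish on $\partial\Omega$, whence $\tilde{\bf E}_{0}\equiv{\bf 0}$ by unique continuation, and the Fredholm alternative then gives existence and an a priori bound) the solution operator is bounded with norm $M$ independent of $\alpha$, so $\Vert\tilde{\bf E}_{0}\Vert_{\mathbf{H}(\mathbf{curl};\Omega)}\le M\big(\Vert\tilde{\bf J}_{0}\Vert_{\mathbf{L}^{2}(\Omega_{s})}+\Vert{\bf g}\Vert_{\mathbf{L}^{2}(\partial\Omega)}\big)$. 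Feeding in the first step yields $\Vert\tilde{\bf E}_{0}\Vert_{\mathbf{L}^{2}(\Omega_{s})}\le M\big((\omega/\alpha)\Vert\tilde{\bf E}_{0}\Vert_{\mathbf{L}^{2}(\Omega_{s})}+\Vert{\bf g}\Vert_{\mathbf{L}^{2}(\partial\Omega)}\big)$; since $\alpha$ is governed by the large extension parameter $\lambda$ we may take $\alpha\ge 2M\omega$, absorb the first term on the right, and conclude $\Vert\tilde{\bf E}_{0}\Vert_{\mathbf{L}^{2}(\Omega_{s})}\le 2M\Vert{\bf g}\Vert_{\mathbf{L}^{2}(\partial\Omega)}=:C_{0}$, a constant independent of $\alpha$. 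Combining the two estimates gives $\Vert\tilde{\bf J}_{0}\Vert_{\mathbf{L}^{2}(\Omega_{s})}\le(\omega/\alpha)C_{0}=:C/\alpha$, which is the assertion.

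The routine parts are the integration by parts and the imaginary-part manipulation of the first step. The real obstacle is the second step: securing an a priori bound on $\tilde{\bf E}_{0}$ whose constant does not deteriorate as $\lambda\to\infty$. What makes this work, and what I would emphasize, is precisely that the effective Maxwell operator carries no $\lambda$-dependence and that the polarization-current coupling in the first equation becomes a genuine contraction once $\alpha$ is large, so that the extension cannot push the effective problem into resonance; one must also be a little careful to justify the complex-vector form of the coercivity (\ref{eq:4.0}) used above, which rests on the symmetry of $\widehat{\gamma^{\ast}}$ and on the reality of $\widehat{\beta^{\ast}}$.
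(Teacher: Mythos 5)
Your first step is sound and is, in fact, precisely the ingredient that the paper's terse ``very similar to Lemma~\ref{lemma1}'' reference leaves implicit: testing the second equation of (\ref{eq:2.18}) with $\overline{\tilde{\bf J}_0}$, discarding the real term $-\widehat{\beta^\ast}\Vert\mathbf{div}\,\tilde{\bf J}_0\Vert^2$, and using the complex-symmetric (not Hermitian) structure of $\widehat{\gamma^\ast}$ to pass from the real coercivity (\ref{eq:4.0}) to ${\rm Im}\big(\widehat{\gamma^\ast}{\bm v}\cdot\overline{\bm v}\big)\ge\alpha|{\bm v}|^{2}$ yields $\Vert\tilde{\bf J}_0\Vert_{\mathbf{L}^2(\Omega_s)}\le(\omega/\alpha)\Vert\tilde{\bf E}_0\Vert_{\mathbf{L}^2(\Omega_s)}$; without such a step, the Lemma~\ref{lemma1}-style energy identity alone only delivers the weaker rate $C/\alpha^{1/2}$. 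Your symmetry justification is also correct: testing the cell problem (\ref{eq:2-10-10-2}) for $\Theta_i^{\gamma}$ against $\Theta_j^{\gamma}$ shows $\widehat{\gamma^\ast}_{ij}=\mathcal{M}_Y\big(\gamma^\ast({\bf e}_i+\mathbf{curl}_{\bf y}\Theta_i^{\gamma})\cdot({\bf e}_j+\mathbf{curl}_{\bf y}\Theta_j^{\gamma})\big)$, which is symmetric in $i,j$.

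Where you genuinely diverge from the paper is in bounding $\Vert\tilde{\bf E}_0\Vert$: you feed your estimate back into the Maxwell stability bound and absorb, which forces the extra hypothesis $\alpha\ge 2M\omega$; for smaller $\alpha$ your inequality $\Vert\tilde{\bf E}_0\Vert\le M\big((\omega/\alpha)\Vert\tilde{\bf E}_0\Vert+\Vert{\bf g}\Vert\big)$ is vacuous and your argument produces no bound at all, whereas the lemma asserts (\ref{eq:4.1}) for an arbitrary positive $\alpha$ with $C$ independent of $\alpha$. The paper's route, transposed from the proof of Lemma~\ref{lemma1}, avoids this restriction: take imaginary parts of both tested equations of (\ref{eq:2.18}); the cross terms $\omega\,{\rm Re}(\tilde{\bf J}_0,\tilde{\bf E}_0)_{s}$ cancel exactly, leaving
\begin{equation*}
\omega\Vert\tilde{\bf E}_{0,T}\Vert^{2}_{\mathbf{L}^2(\partial\Omega)}+{\rm Im}\big(\widehat{\gamma^\ast}\tilde{\bf J}_0,\tilde{\bf J}_0\big)_{s}=-{\rm Im}\langle{\bf g},\tilde{\bf E}_{0,T}\rangle\le\frac{\omega}{2}\Vert\tilde{\bf E}_{0,T}\Vert^{2}_{\mathbf{L}^2(\partial\Omega)}+\frac{1}{2\omega}\Vert{\bf g}\Vert^{2}_{\mathbf{L}^2(\partial\Omega)},
\end{equation*}
hence $\alpha\Vert\tilde{\bf J}_0\Vert^{2}\le C\Vert{\bf g}\Vert^{2}$ unconditionally. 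For $\alpha\le 1$ this already implies (\ref{eq:4.1}) since $\alpha^{-1/2}\le\alpha^{-1}$; for $\alpha\ge 1$ it gives $\Vert\tilde{\bf J}_0\Vert\le C$, Theorem~4.17 of \cite{Monk} then bounds $\Vert\tilde{\bf E}_0\Vert$ by a constant independent of $\alpha$, and your first step upgrades this to $C/\alpha$. So your proof is correct in the regime in which the lemma is actually used ($\alpha\to\infty$ as $\lambda\to\infty$), but to cover the statement as written you should replace, or at least supplement, the absorption step with this unconditional energy identity.
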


The proof of this lemma is very similar to that of Lemma~\ref{lemma1} and thus we omit it here.

\begin{lemma}\label{lem4.2}
Let $(\tilde{\bf E}_{0}, \tilde{\bf J}_{0})$ be the solution of (\ref{eq:2.18}) and $\mathbf{E}$ be the solution of the Maxwell's equations
 \begin{equation}\label{eq:4.2}
\left\{
\begin{array}{@{}l@{}}
    {\displaystyle \mathbf{curl}\,\big({\mu}_{\rm eff}^{-1}\,\mathbf{curl}\,{{\bf E}}\big)  -\omega^2{\varepsilon}_{\rm eff} \, {{\bf E}}= {\bf 0}, \;\quad \;{\rm in} \;\; \Omega,}\\[3mm]
 {\displaystyle   (\mu_{\rm eff}^{-1}\,\mathbf{curl}\, {\bf E}) \times {\bf n} - {\mathrm {{i}}}\omega({\bf n}\times {\bf E})\times{\bf n} ={\bf g},\;\;\quad {\rm on}\;\; \partial \Omega.}
\end{array}
\right.
\end{equation}
Under the assumption of Lemma~\ref{lem4.1}, we have the following estimate
\begin{equation}\label{eq:4.3}
\Vert \tilde{\mathbf{E}}_{0} - \mathbf{E} \Vert_{\mathbf{H}(\mathbf{curl};\Omega)} \leq \frac{C}{\alpha}.
\end{equation}
\end{lemma}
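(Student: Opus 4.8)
The plan is to estimate the difference $\mathbf{w}:=\tilde{\mathbf{E}}_0-\mathbf{E}$ directly, by subtracting the two boundary value problems and treating $\tilde{\mathbf{J}}_0$ as a source term that is made small by Lemma~\ref{lem4.1}. Subtracting the first equation and the impedance boundary condition in (\ref{eq:4.2}) from the corresponding relations in (\ref{eq:2.18}), and recalling that $\tilde{\mathbf{J}}_0$ is defined only on $\Omega_s$ (extend it by zero to all of $\Omega$ and denote by $\chi_{\Omega_s}$ the characteristic function of $\Omega_s$), one finds that $\mathbf{w}\in\mathbf{H}(\mathbf{curl};\Omega)$ solves the Maxwell problem
\begin{equation*}
\left\{
\begin{array}{@{}l@{}}
{\displaystyle \mathbf{curl}\,\big(\mu_{\rm eff}^{-1}\,\mathbf{curl}\,\mathbf{w}\big)-\omega^2\varepsilon_{\rm eff}\,\mathbf{w}={\rm i}\omega\,\tilde{\mathbf{J}}_0\,\chi_{\Omega_s},\quad {\rm in}\;\;\Omega,}\\[2mm]
{\displaystyle (\mu_{\rm eff}^{-1}\,\mathbf{curl}\,\mathbf{w})\times\mathbf{n}-{\rm i}\omega(\mathbf{n}\times\mathbf{w})\times\mathbf{n}=\mathbf{0},\quad {\rm on}\;\;\partial\Omega.}
\end{array}
\right.
\end{equation*}
Equivalently, in weak form, $a(\mathbf{w},\mathbf{v})={\rm i}\omega(\tilde{\mathbf{J}}_0,\mathbf{v})_{\mathbf{L}^2(\Omega_s)}$ for all $\mathbf{v}\in\mathbf{H}(\mathbf{curl};\Omega)$, where $a(\cdot,\cdot)$ is the sesquilinear form associated with the homogenized Maxwell operator with $\mu_{\rm eff}$, $\varepsilon_{\rm eff}$ and the impedance boundary condition on $\partial\Omega$.

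The next step is to invoke the well-posedness of this Maxwell problem. The form $a$ satisfies a G\aa{}rding-type inequality (its principal $\mathbf{curl}$--$\mathbf{curl}$ part plus a compact perturbation from the zero-order and boundary terms), and by the arguments that establish well-posedness of the original and extended systems (cf.\ \cite{ma2019mathematical} and the analysis in the Appendix), problem (\ref{eq:4.2}) is uniquely solvable at the frequency $\omega$ under consideration. Hence its solution operator is bounded, and there is a constant $C$, independent of the right-hand side, such that
\begin{equation*}
\|\mathbf{w}\|_{\mathbf{H}(\mathbf{curl};\Omega)}\leq C\sup_{\mathbf{0}\neq\mathbf{v}\in\mathbf{H}(\mathbf{curl};\Omega)}\frac{\big|{\rm i}\omega(\tilde{\mathbf{J}}_0,\mathbf{v})_{\mathbf{L}^2(\Omega_s)}\big|}{\|\mathbf{v}\|_{\mathbf{H}(\mathbf{curl};\Omega)}}\leq C\,\|\tilde{\mathbf{J}}_0\|_{\mathbf{L}^2(\Omega_s)},
\end{equation*}
where the Cauchy--Schwarz inequality was used and $\omega$ absorbed into $C$. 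Finally, combining this with the bound $\|\tilde{\mathbf{J}}_0\|_{\mathbf{L}^2(\Omega_s)}\leq C/\alpha$ from Lemma~\ref{lem4.1} (valid under the coercivity hypothesis (\ref{eq:4.0})) yields $\|\tilde{\mathbf{E}}_0-\mathbf{E}\|_{\mathbf{H}(\mathbf{curl};\Omega)}\leq C/\alpha$, which is (\ref{eq:4.3}).

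The step I expect to be the main obstacle is the a priori estimate $\|\mathbf{w}\|_{\mathbf{H}(\mathbf{curl};\Omega)}\leq C\|\tilde{\mathbf{J}}_0\|_{\mathbf{L}^2(\Omega_s)}$ with a constant $C$ that does not deteriorate as $\alpha\to\infty$ (equivalently, as the extension parameter $\lambda\to\infty$, since $\alpha$ grows with $\lambda$). Here the key observation is that problem (\ref{eq:4.2}) involves only $\mu_{\rm eff}$, $\varepsilon_{\rm eff}$, $\omega$ and $\mathbf{g}$ — none of which depends on $\lambda$, because $\widehat{\mu}$ and $\widehat{\varepsilon}$ are built solely from the $\lambda$-independent functions $\mu(\mathbf{y})$, $\varepsilon(\mathbf{y})$. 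Consequently its solution operator, and hence $C$, may be chosen uniformly in $\lambda$; the only coefficients that blow up with $\lambda$ are $\widehat{\gamma^{\ast}}$ (and the part of $\widehat{\beta^{\ast}}$ it interacts with), which enter the polarization-current equation but not (\ref{eq:4.2}), and their effect is entirely captured by the bound on $\tilde{\mathbf{J}}_0$ provided by Lemma~\ref{lem4.1}. One should also verify that the zero-extension $\tilde{\mathbf{J}}_0\chi_{\Omega_s}$ used as the source lies in $\mathbf{L}^2(\Omega)$ with norm equal to $\|\tilde{\mathbf{J}}_0\|_{\mathbf{L}^2(\Omega_s)}$, which is immediate.
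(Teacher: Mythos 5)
Your proposal is correct and follows essentially the same route as the paper: subtract the two problems so that the error solves a Maxwell impedance problem with source ${\rm i}\omega\tilde{\mathbf{J}}_0$ (extended by zero), apply the a priori stability estimate for that problem (the paper simply cites Theorem~4.17 of \cite{Monk} where you rederive it via G\aa{}rding/Fredholm), and conclude with Lemma~\ref{lem4.1}. Your added observation that the stability constant is uniform in $\lambda$ because $\mu_{\rm eff}$ and $\varepsilon_{\rm eff}$ do not depend on $\lambda$ is a worthwhile clarification the paper leaves implicit.
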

\begin{proof}
Let $\mathbf{e} = \tilde{\mathbf{E}}_{0} - \mathbf{E}$. By subtracting the first equation of (\ref{eq:2.18}) from (\ref{eq:4.2}), we see that $\mathbf{e}$ satisfies
 \begin{equation}\label{eq:4.4}
\left\{
\begin{array}{@{}l@{}}
    {\displaystyle \mathbf{curl}\,\big({\mu}_{\rm eff}^{-1}\,\mathbf{curl}\,{{\bf e}}\big)  -\omega^2{\varepsilon}_{\rm eff} \, {{\bf e}}= {\rm i}\omega\tilde{\bf J}_{0}, \;\quad \;{\rm in} \;\; \Omega,}\\[3mm]
 {\displaystyle   (\mu_{\rm eff}^{-1}\,\mathbf{curl}\, {\bf e}) \times {\bf n} - {\mathrm {{i}}}\omega({\bf n}\times {\bf e})\times{\bf n} ={\bf 0},\;\;\quad {\rm on}\;\; \partial \Omega.}
\end{array}
\right.
\end{equation}
From Theorem~4.17 of \cite{Monk}, we get the estimate
\begin{equation}\label{eq:4.5}
\Vert \mathbf{e} \Vert_{\mathbf{H}(\mathbf{curl};\Omega)} \leq C\Vert \tilde{\bf J}_{0} \Vert_{\mathbf{L}^{2}(\Omega_{s})},
\end{equation}
which yields the desired estimate (\ref{eq:4.3}) by using (\ref{eq:4.1}).
\qquad \end{proof}

\begin{rem}
The above two lemmas show that as the imaginary part of the homogenized coefficient $\widehat{\gamma^{\ast}}$ becomes larger, the influence of the polarization current on the electric field becomes smaller. If $\alpha$ tends to infinity, the coupled system (\ref{eq:2.18}) will reduce to the pure Maxwell's equations and the electric field will lose the nonlocal information inside the metallic nanostructures. Although we are not able to give an estimate of the lower bound $\alpha$ in terms of the extension parameter $\lambda$, we have tested some practical examples in section~\ref{sec-5} and verified numerically that in these examples, $\alpha$ tends to infinity as $\lambda\rightarrow \infty$.
\end{rem}

Motivated by the numerical results of the previous multiscale approach and Lemma~\ref{lem4.1} and \ref{lem4.2}, we propose the following modified multiscale approach.

\vspace{2mm}
\noindent \textbf{Modified Multiscale Approach}:

\vspace{2mm}
\textbf{Step 1}. Solve the equations (\ref{eq:2-10-10})-(\ref{eq:2-10-10-1}) in the reference cell $Y$ to get the cell functions $\theta_i^{\mu}({\bf y})$, $\theta_i^{\varepsilon}({\bf y})$, and compute the homogenized coefficients $\widehat{\mu}$ and $\widehat{\varepsilon}$ given by (\ref{eq:2-10-14-0}).

\vspace{1mm}
\textbf{Step 2}. Solve the following homogenized Maxwell's equation on the whole domain
 \begin{equation}\label{eq:4.6}
\left\{
\begin{array}{@{}l@{}}
    {\displaystyle \mathbf{curl}\,\big({\mu}_{\rm eff}^{-1}\,\mathbf{curl}\,{{\bf E}}^{0}\big)  -\omega^2{\varepsilon}_{\rm eff} \, {{\bf E}}^{0}= {\bf 0}, \;\quad \;{\rm in} \;\; \Omega,}\\[3mm]
 {\displaystyle   (\mu_{\rm eff}^{-1}\,\mathbf{curl}\, {\bf E}^{0}) \times {\bf n} - {\mathrm {{i}}}\omega({\bf n}\times {\bf E}^{0})\times{\bf n} ={\bf g},\;\;\quad {\rm on}\;\; \partial \Omega.}
\end{array}
\right.
\end{equation}

\vspace{1mm}
\textbf{Step 3}. Compute the multiscale approximate solution ${\bf {E}}^{0}_{\eta}=(I+\mathbf{grad}_{\bf y}\,{\bm \theta^{\varepsilon}})\,{{\bf E}}^{0}$ and then find the modified multiscale approximate solutions ${\bf {E}}^{M}_{\eta}$ for the electric field such that ${\bf {E}}^{M}_{\eta} = {\bf {E}}^{0}_{\eta}$ outside the metallic nanostructures and ${\bf {E}}^{M}_{\eta}$ satisfies
 \begin{equation}\label{eq:4.7}
\left\{
\begin{array}{@{}l@{}}
    {\displaystyle  \mathbf{curl}\, (\mu_{k}^{-1}\mathbf{curl}\, {\bf E}^{M}_{\eta}) - \varepsilon_k \,\omega^{2}\, {\bf E}^{M}_{\eta}-{\mathrm {{i}}}\omega {\bf J} ^{M}_{\eta}={\bf 0}, \;\qquad \quad{\rm in} \;\; \Omega^{k}_{\eta},}\\[2mm]
  {\displaystyle \frac{\omega(\omega+{\mathrm {{i}}}\gamma)}{\omega^{2}_{p}\varepsilon_{0}}{\bf J} ^{M}_{\eta}+ \frac{\beta^{2}}{\omega^{2}_{p}\varepsilon_{0}} \mathbf{grad}\,(\mathbf{div}\, {\bf J}^{M}_{\eta}) -{\mathrm {{i}}}\omega {\bf E}^{M}_{\eta}={\bf 0}, \quad {\rm in}\;\; \Omega^{k}_{\eta},}\\[4mm]
    {\displaystyle{\bf n}\cdot {\bf J} ^{M}_{\eta}= 0,  \;\;\;\;\qquad \qquad {\rm on}\;\; \partial \Omega^{k}_{\eta},}\\[2mm]
    {\displaystyle {\bf n}\times{\bf E} ^{M}_{\eta}= {\bf n}\times{\bf E}_{\eta}^{0},\quad \quad {\rm on}\;\; \partial \Omega^{k}_{\eta},}
\end{array}
\right.
\end{equation}
for $k = 1,\cdots, N$, where $\Omega_{\eta}^{k}$ is the domain occupied by the $k$-th metallic nanostructure, $N$ is the number of the metallic nanostructures, $\mu_{k} = \mu_{\eta}|_{\Omega_{\eta}^{k}}$, and $\varepsilon_{k} = \varepsilon_{\eta}|_{\Omega_{\eta}^{k}}$.

There are two main differences between the modified multiscale approach and the previous one. First, in our modified approach, we only solve the homogenized Maxwell's equation (\ref{eq:4.6}) instead of the homogenized coupled system (\ref{eq:2.18}) on the whole domain. This is based on the observation that the homogenized coupled system (\ref{eq:2.18}) is close to the pure Maxwell's equations if we choose a sufficiently large extension parameter $\lambda$. By doing so, we can reduce much computational cost since we don't need to solve the cell equation (\ref{eq:2-10-10-2}), either. Second, in our modified approach, we modify the multiscale approximate solution ${\bf {E}}^{0}_{\eta}$ inside the metallic nanostructures by solving the original coupled system with boundary conditions given by ${\bf {E}}^{0}_{\eta}$ in each metallic nanostructure. This is based on the observation in numerical experiments that ${\bf {E}}^{0}_{\eta}$ computed by the previous multiscale approach agrees well with the reference solution outside the metallic nanostructures while substantially deviates from the reference solution inside the metallic nanostructures. Note that (\ref{eq:4.7}) is solved in each metallic nanostructure separately, where the coefficients $\mu_{k}$, $\varepsilon_{k}$, $\gamma$ and $\beta$ are constant. Therefore, the computational burden of solving (\ref{eq:4.7}) is much lower than solving the coupled system on the whole domain. In addition, in view of the periodic arrangement of metallic nanostructures within the dielectric medium, we have a fast algorithm for solving (\ref{eq:4.7}) which is described in detail below.

Denoting by $\mathbf{E}^{M,k}_{\eta}= \mathbf{E}^{M}_{\eta}|_{\Omega_{\eta}^{k}}$, $\mathbf{J}^{M,k}_{\eta}= \mathbf{J}^{M}_{\eta}|_{\Omega_{\eta}^{k}}$, and assuming that $\mathbf{E}^{M,k}_{\eta} = \widetilde{\mathbf{E}}^{M,k}_{\eta}+ {\bf {E}}^{0}_{\eta}|_{\Omega_{\eta}^{k}}$, then the problem (\ref{eq:4.7}) is equivalent to finding $\widetilde{\mathbf{E}}^{M,k}_{\eta}$ such that
 \begin{equation}\label{eq:4.8}
\left\{
\begin{array}{@{}l@{}}
    {\displaystyle  \mathbf{curl}\, (\mu_{k}^{-1}\mathbf{curl}\, \widetilde{\bf E}^{M,k}_{\eta}) - \varepsilon_k \,\omega^{2}\, \widetilde{\bf E}^{M,k}_{\eta}-{\mathrm {{i}}}\omega {\bf J} ^{M}_{\eta}=  - \mathbf{curl}\, (\mu_{k}^{-1}\mathbf{curl}\, {\bf E}^{0}_{\eta}) + \varepsilon_k \,\omega^{2}\, {\bf E}^{0}_{\eta}, \quad{\rm in} \;\; \Omega^{k}_{\eta},}\\[2mm]
  {\displaystyle \frac{\omega(\omega+{\mathrm {{i}}}\gamma)}{\omega^{2}_{p}\varepsilon_{0}}{\bf J} ^{M,k}_{\eta}+ \frac{\beta^{2}}{\omega^{2}_{p}\varepsilon_{0}} \mathbf{grad}\,(\mathbf{div}\, {\bf J}^{M,k}_{\eta}) -{\mathrm {{i}}}\omega \widetilde{\bf E}^{M,k}_{\eta}={\mathrm {{i}}}\omega {\bf E}^{0}_{\eta}, \quad {\rm in}\;\; \Omega^{k}_{\eta},}\\[4mm]
    {\displaystyle{\bf n}\cdot {\bf J} ^{M,k}_{\eta}= 0,  \;\;\;\qquad  {\rm on}\;\; \partial \Omega^{k}_{\eta},}\\[2mm]
    {\displaystyle {\bf n}\times \widetilde{\bf E} ^{M,k}_{\eta}= {\bf 0},  \quad \quad {\rm on}\;\; \partial \Omega^{k}_{\eta}.}
\end{array}
\right.
\end{equation}

Assume that ${\bf {E}}^{0}_{\eta}|_{\Omega_{\eta}^{k}} \in \mathbf{H}(\mathbf{curl};\Omega_{\eta}^{k})$ for $k=1,\cdots,N$. The variational formulation of (\ref{eq:4.8}) is: Find $\widetilde{\mathbf{E}}^{M,k}_{\eta}\in \mathbf{H}_{0}(\mathbf{curl};\Omega_{\eta}^{k})$, ${\mathbf{J}}^{M,k}_{\eta}\in \mathbf{H}_{0}(\mathbf{div};\Omega_{\eta}^{k})$ such that
 \begin{equation}\label{eq:4.9}
\left\{
\begin{array}{@{}l@{}}
    {\displaystyle  \mu_{k}^{-1}(\mathbf{curl}\, \widetilde{\bf E}^{M,k}_{\eta},\, \mathbf{curl}\,\mathbf{u}) - \varepsilon_k \,\omega^{2}(\widetilde{\bf E}^{M,k}_{\eta},\,\mathbf{u})-{\mathrm {{i}}}\omega ({\bf J} ^{M,k}_{\eta},\,\mathbf{u})}\\[2mm]
  {\displaystyle \quad = -\mu_{k}^{-1}(\mathbf{curl}\, {\bf E}^{0}_{\eta},\, \mathbf{curl}\,\mathbf{u}) + \varepsilon_k \,\omega^{2}({\bf E}^{0}_{\eta},\,\mathbf{u}), \qquad \forall \mathbf{u} \in \mathbf{H}_{0}(\mathbf{curl};\Omega_{\eta}^{k})}\\[2mm]
  {\displaystyle \frac{\omega(\omega+{\mathrm {{i}}}\gamma)}{\omega^{2}_{p}\varepsilon_{0}}({\bf J} ^{M,k}_{\eta},\, \mathbf{w})- \frac{\beta^{2}}{\omega^{2}_{p}\varepsilon_{0}} (\mathbf{div}\, {\bf J}^{M,k}_{\eta},\, \mathbf{div}\, \mathbf{w}) -{\mathrm {{i}}}\omega (\widetilde{\bf E}^{M,k}_{\eta},\,\mathbf{w})}\\[4mm]
    {\displaystyle \quad ={\mathrm {{i}}}\omega ({\bf E}^{0}_{\eta},\,\mathbf{w}), \qquad \forall \mathbf{w}\in \mathbf{H}_{0}(\mathbf{div};\Omega_{\eta}^{k}).}
\end{array}
\right.
\end{equation}

Let $\mathcal{T}^{k}_{h}$ be a uniform triangulation of $\Omega_{\eta}^{k}$ into tetrahedrons of maximal diameter $h$. We define the lowest order N\'{e}d\'{e}lec $\mathbf{H}_{0}(\mathbf{curl};\Omega_{\eta}^{k})$-conforming and Raviart--Thomas $ \mathbf{H}_{0}(\mathbf{div};\Omega_{\eta}^{k})$-conforming finite element spaces as follows \cite{Monk}:
\begin{equation*}
\begin{array}{lll}
    {\displaystyle  {X}^{k}_{h}=\{{\bf u}_{h} \in \mathbf{H}_{0}(\mathbf{curl};\Omega_{\eta}^{k}):\; {\bf u}_{h}|_{T} =\mathbf{a}_{T}+\mathbf{b}_{T}\times\mathbf{x}\;\;{\rm with}\; \mathbf{a}_{T},\mathbf{b}_{T}\in \mathbb{R}^3,\;\; \forall T\in \mathcal{T}^{k}_{h}\},}\\[2mm]
    {\displaystyle  {Y}^{k}_{h}=\{{\bf u}_{h} \in \mathbf{H}_0(\mathbf{div};\Omega_{\eta}^{k}):\; {\bf u}_{h}|_{T} ={a}_{T}\mathbf{x}+\mathbf{b}_{T}\;\;{\rm with}\; {a}_{T}\in \mathbb{R},\mathbf{b}_{T}\in \mathbb{R}^3,\;\; \forall T\in \mathcal{T}^{k}_{h}\}}.
\end{array}
\end{equation*}

The finite element approximation of (\ref{eq:4.9}) is formulated as follows: find $\widetilde{\mathbf{E}}^{M,k}_{\eta,h}\in {X}^{k}_{h}$, ${\mathbf{J}}^{M,k}_{\eta,h}\in {Y}^{k}_{h}$ such that
 \begin{equation}\label{eq:5.0}
\left\{
\begin{array}{@{}l@{}}
    {\displaystyle  \mu_{k}^{-1}(\mathbf{curl}\, \widetilde{\bf E}^{M,k}_{\eta,h},\, \mathbf{curl}\,\mathbf{u}_{h}) - \varepsilon_k \,\omega^{2}(\widetilde{\bf E}^{M,k}_{\eta,h},\,\mathbf{u}_{h})-{\mathrm {{i}}}\omega ({\bf J} ^{M,k}_{\eta,h},\,\mathbf{u}_{h})}\\[2mm]
  {\displaystyle \quad = -\mu_{k}^{-1}(\mathbf{curl}\, {\bf E}^{0}_{\eta},\, \mathbf{curl}\,\mathbf{u}_{h}) + \varepsilon_k \,\omega^{2}({\bf E}^{0}_{\eta},\,\mathbf{u}_{h}), \qquad \forall \mathbf{u}_{h} \in {X}^{k}_{h},}\\[2mm]
  {\displaystyle \frac{\omega(\omega+{\mathrm {{i}}}\gamma)}{\omega^{2}_{p}\varepsilon_{0}}({\bf J} ^{M,k}_{\eta,h},\, \mathbf{w}_{h})- \frac{\beta^{2}}{\omega^{2}_{p}\varepsilon_{0}} (\mathbf{div}\, {\bf J}^{M,k}_{\eta,h},\, \mathbf{div}\, \mathbf{w}_{h}) -{\mathrm {{i}}}\omega (\widetilde{\bf E}^{M,k}_{\eta,h},\,\mathbf{w}_{h})}\\[4mm]
    {\displaystyle \quad ={\mathrm {{i}}}\omega ({\bf E}^{0}_{\eta},\,\mathbf{w}_{h}), \qquad \forall \mathbf{w}_{h}\in {Y}^{k}_{h}.}
\end{array}
\right.
\end{equation}
The system (\ref{eq:5.0}) can be rewritten as the following algebraic equation
\begin{equation}\label{eq:5.1}
\mathbb{A}_{k}\mathbf{x}_{k} = \mathbf{b}_{k},
\end{equation}
where $\mathbb{A}_{k}$ is the matrix, $\mathbf{x}_{k}$ and $\mathbf{b}_{k}$ are the vectors representing the unknowns and the right-hand term, respectively.

If there is only one metallic nanostructure in the reference cell as shown in Fig~2.1, then the domains $\Omega_{\eta}^{k}$ $(k=1,\cdots,N)$ are essentially the same (translation-invariant). Therefore, we can generate the same mesh $\mathcal{T}_{h}$ for all the domains $\Omega_{\eta}^{k}$, which results in the same matrix $\mathbb{A}$ for the algebraic equation (\ref{eq:5.1}) due to the translation-invariant property of the $\mathbf{curl}$ and $\mathbf{div}$ operators and the fact that the coefficients $\mu_{k}$ and $\varepsilon_{k}$ take the same value in all the domains, respectively. In this case, we need to solve a set of linear equations
\begin{equation}\label{eq:5.2}
\mathbb{A}\mathbf{x}_{k} = \mathbf{b}_{k},\qquad k=1,\cdots,N
\end{equation}
with the same matrix and many different right-hand vectors. To solve this problem, we can perform an $LU$ decomposition of the matrix $\mathbb{A}$ once and then solve the triangular matrices for the different $\mathbf{b}_{k}$. This algorithm can be easily extended to the case that there exist several metallic nanostructures in the reference cell. In this way, we can make the computational burden of solving (\ref{eq:4.7}) controllable and insensitive to the number of the metallic nanostructures.

Finally, we briefly discuss the numerical methods for solving the scalar cell equations (\ref{eq:2-10-10})-(\ref{eq:2-10-10-1}) and the Maxwell's equations (\ref{eq:4.6}). Note that the cell equations are the same as for the scalar elliptic operators $-\mathbf{div}\,\mu_{\eta}\,\mathbf{grad}$ and $-\mathbf{div}\,\varepsilon_{\eta}\,\mathbf{grad}$ and they can be solved numerically by the adaptive finite element method as discussed in \cite{zhang2014multiscale}. As for the Maxwell's equations (\ref{eq:4.6}), a Galerkin finite element method based on the N\'{e}d\'{e}lec elements can be used and the resulting linear system can be solved by a multigrid preconditioned Minres algorithm. For more details, we refer the reader to \cite{chen2007adaptive,Monk}.

\begin{rem}
Although we don't use the technique of extension and the homogenized equation for the polarization current explicitly in the modified multiscale approach, they motivate  and justify the modified approach. In fact, as indicated in Lemma~\ref{lem4.1} and \ref{lem4.2}, it is because the imaginary part of the homogenized coefficient $\widehat{\gamma^{\ast}}$ is sufficiently large due to the large extension parameter $\lambda$ that we can replace the homogenized coupled system (\ref{eq:2.18}) by the homogenized Maxwell's equations (\ref{eq:4.6}) in our modified approach. In addition, the multiscale asymptotic method discussed in section~\ref{sec-3} can be used for the homogenization of composite materials in which the inclusions and the matrix are both metallic nanostructures with different physical parameters.
\end{rem}

\section{Numerical examples}\label{sec-5}
\begin{figure}[t!]
\begin{center}
\tiny{(a)}\includegraphics[width=4.2cm,height=4.2cm]{./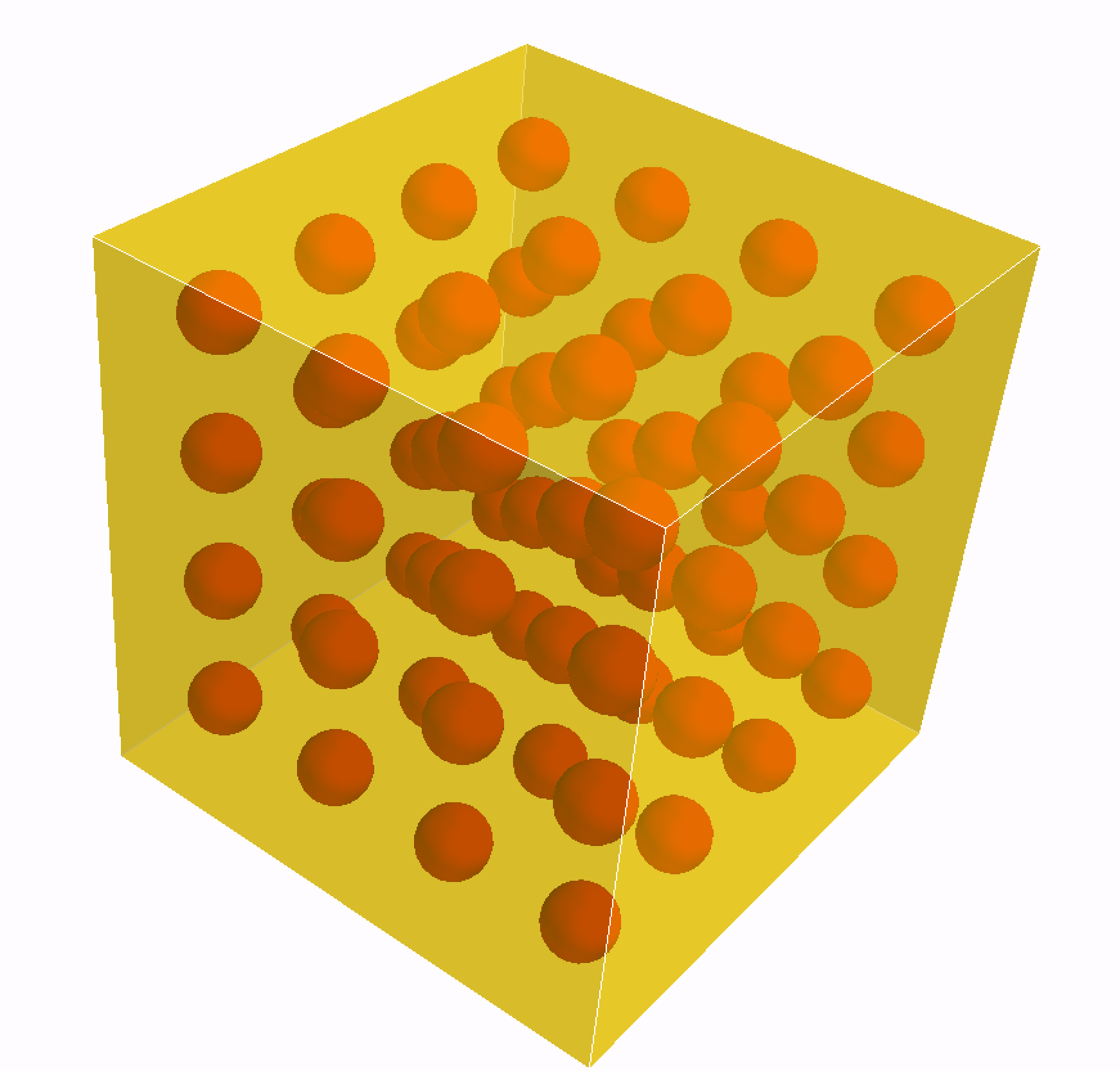}~
\tiny{(b)}\includegraphics[width=4.08cm,height=3.6cm]{./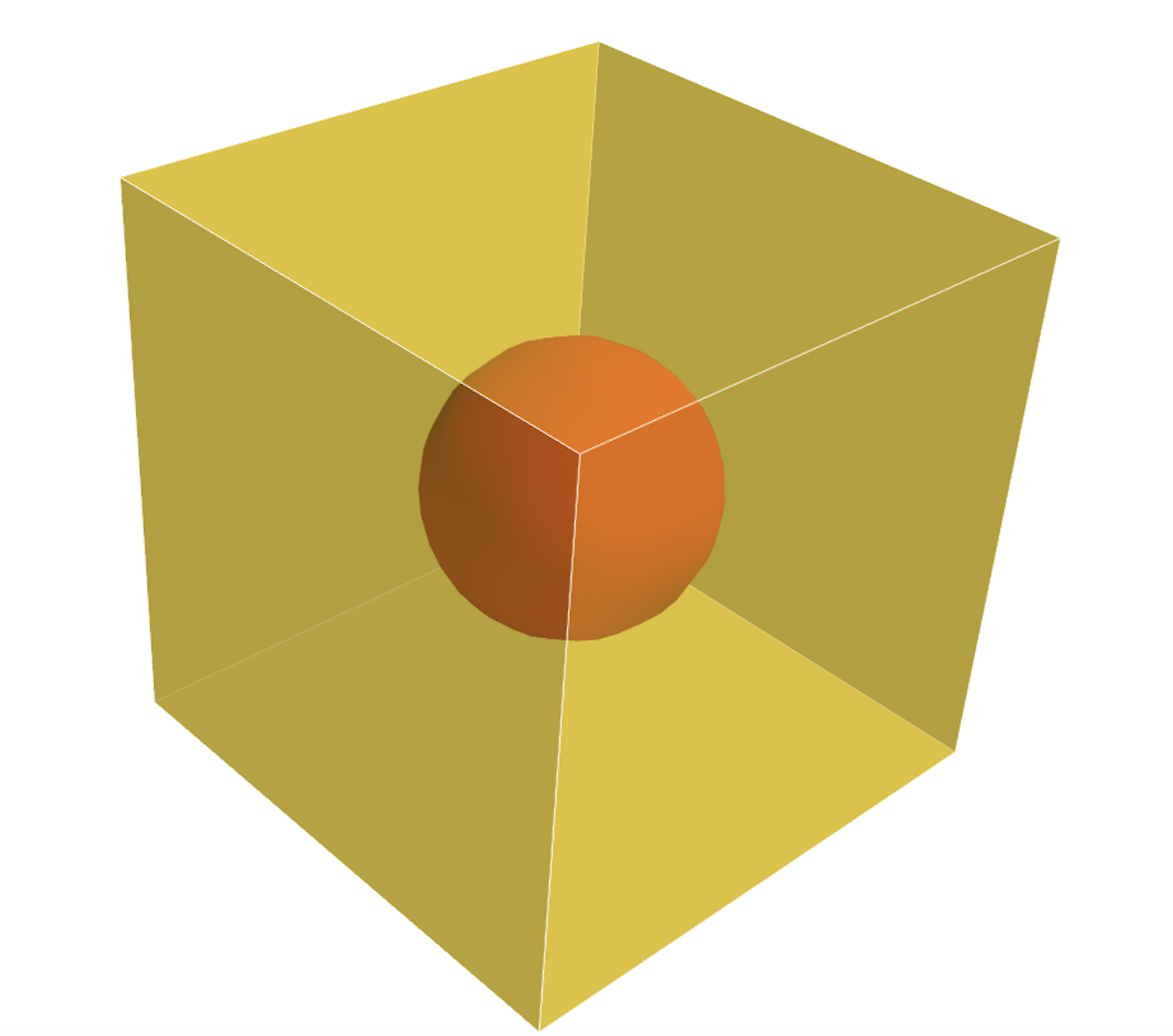}
\caption{{\rm(a)}~periodic gold nanoparticle arrays embedded in a dielectric medium.\/ {\rm(b)}~The rescaled reference cell $Q$.}\label{f4}
\end{center}
\end{figure}

In this section, we present several numerical experiments to validate the effectiveness of our method.
\begin{exam}\label{exam1}
In this example, we consider the scattering of a plane wave by periodic gold nanoparticle arrays embedded in a dielectric medium as shown in Fig.~5.1. There are 4 nanospheres with a radius of 2\,nm in each direction and the inter-particle distance is 1\,nm. The nanoparticle arrays are irradiated by a plane wave $\mathbf{E}^{inc} = {\rm exp}({\mathrm {{i}}}\omega y) \mathbf{e}_{x}$ propagating in the $y-$direction. The Silver--M\"{u}ller absorbing boundary condition is set on the boundary of a sphere of radius 40\,nm to truncate the computational domain. We use the same material parameters for the gold nanoparticle as in \cite{raza2015nonlocal}, which are summarized in Table~\ref{table0}. For the dielectric medium, we consider the following two cases:
\begin{case}\label{case5}
   Silicon dioxide ($SiO_{2}$): $\quad \mu=\mu_{0},\quad \varepsilon=3.9\,\varepsilon_{0}$;
   \vspace{1mm}
\begin{case}\label{case6}
 water ($H_{2}O$) \qquad \quad\;\;: $\quad \mu=\mu_{0},\quad \varepsilon=80\,\varepsilon_{0}$.
\end{case}
\end{case}
\end{exam}

\begin{table}[htbp]
\centering
\caption{Physical parameters of the NHD model.
$\epsilon_{0}$ and $\mu_{0}$ are the electric permittivity and magnetic permeability of free space, respectively.}\label{table0}
\begin{tabular}{ccccc}
  \hline$\omega_{p}$ & $\gamma$& $\beta$ & $\epsilon$ &  $\mu$ \\
 \hline
  $1.37\times 10^{16}$ rad/s & $1.08 \times 10^{14}$ rad/s & $1.08\times 10^{6}$ m/s &$9.5\,\epsilon_{0}$&$\mu_{0}$   \\
  \hline
\end{tabular}
\end{table}

First, we solve the original system (\ref{eq:2.7}) and the extended system (\ref{eq:2.9}) directly on a very fine mesh and test the error between their solutions. Set
  \begin{equation*}
        err=\|{\bf E}_{\eta}-{\bf E}_{\eta,\lambda}\|_{\mathbf{H}_{T}(\mathbf{curl},\Omega)}+ \|{\bf J}_{\eta,\lambda}-{\bf J}_{\eta}\|_{\mathbf{H}(\mathbf{div},\Omega_{s})}
    \end{equation*}
where $({\bf E}_{\eta},{\bf J}_{\eta})$ and $({\bf E}_{\eta,\lambda},{\bf J}_{\eta,\lambda})$ are the numerical solutions of the original system and the extended system, respectively. In Fig.~5.2, we display the errors in logarithmic scale as a function of $\lambda/\gamma$, which allows us to visualize the convergence rates as the slopes of the curves. From Fig.~5.2, we observe that the extension method has the first-order convergence rate, which is faster than the result given in Theorem~\ref{thm:2.1}. A sharper error estimate for the extension method will be investigated in our future work.
\begin{figure}\label{fig:5-1}
\begin{center}
\begin{minipage}{0.4\linewidth}
    {\tiny(a)}\includegraphics[width=5.5cm,height=4.5cm]{./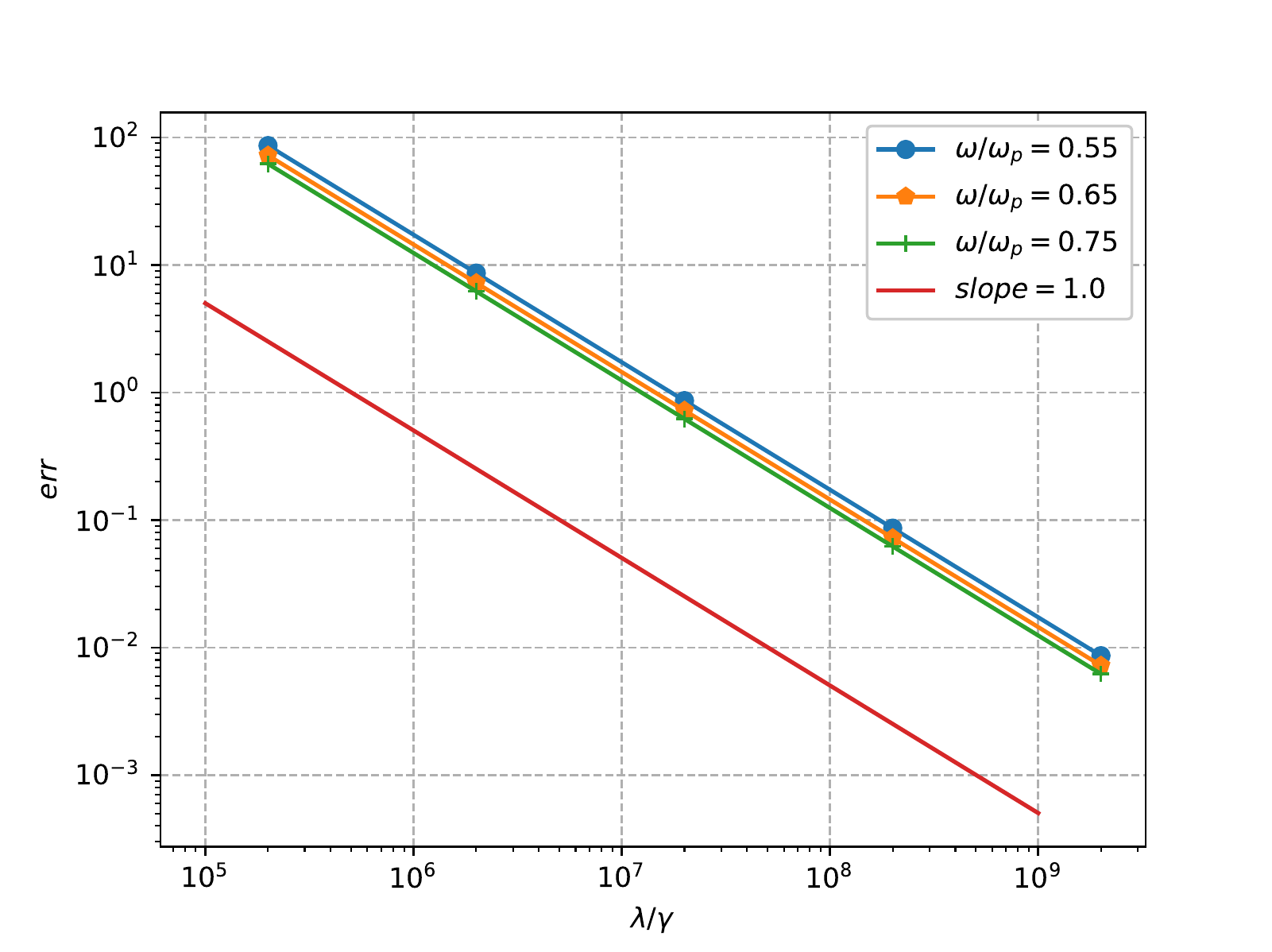}
\end{minipage}
\begin{minipage}{0.4\linewidth}
    {\tiny(b)}\includegraphics[width=5.5cm,height=4.5cm]{./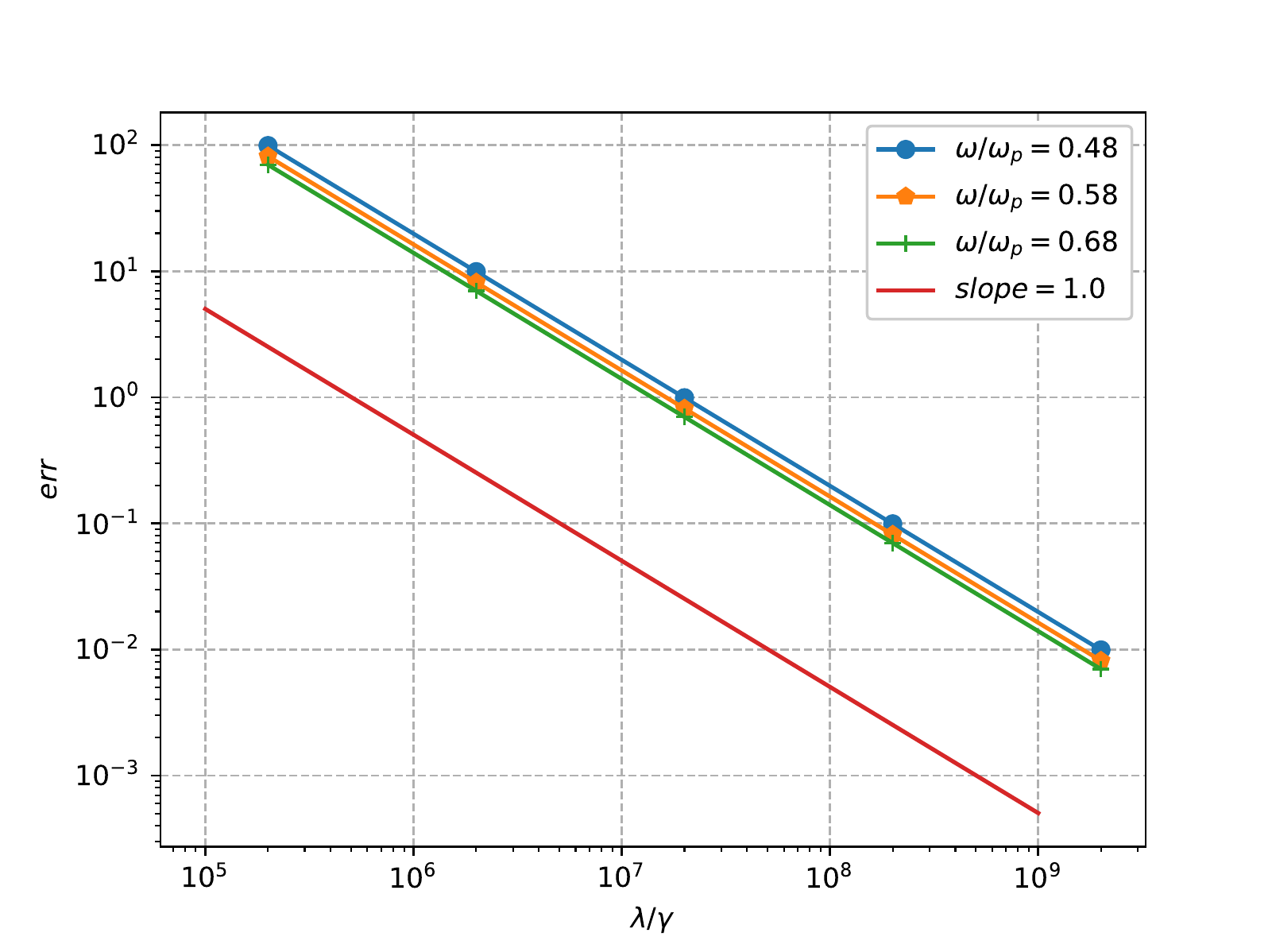}
\end{minipage}
\caption{Convergence rates of the extension (logarithmic scale). {\rm (a)}: Case 5.1; {\rm (b)}: Case 5.2.}
\end{center}
\end{figure}

Next, we give some numerical results to demonstrate the efficiency and accuracy of the proposed multiscale approach. Since in general it is impossible to get the analytic solution $({\bf E}_{\eta},{\bf J}_{\eta})$ of the original problem (\ref{eq:2.7}), in order to show the accuracy of our method, we regard the numerical solution of the original system on a very fine mesh as the reference solution.

\begin{figure}[!htbp]
\centering
{\tiny(a)}\includegraphics[width=12cm,height=5.7cm]{./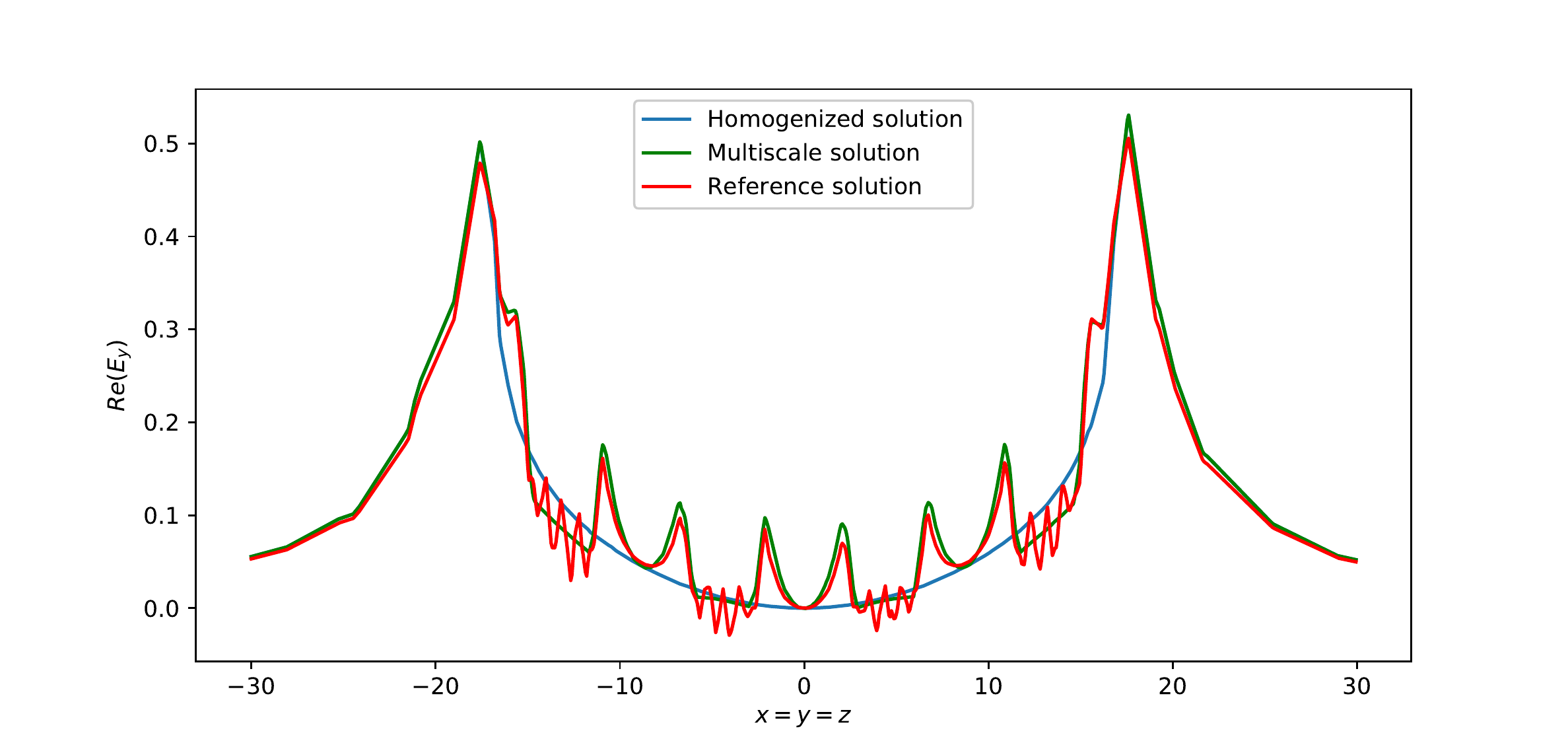}

{\tiny(b)}\includegraphics[width=12cm,height=5.7cm]{./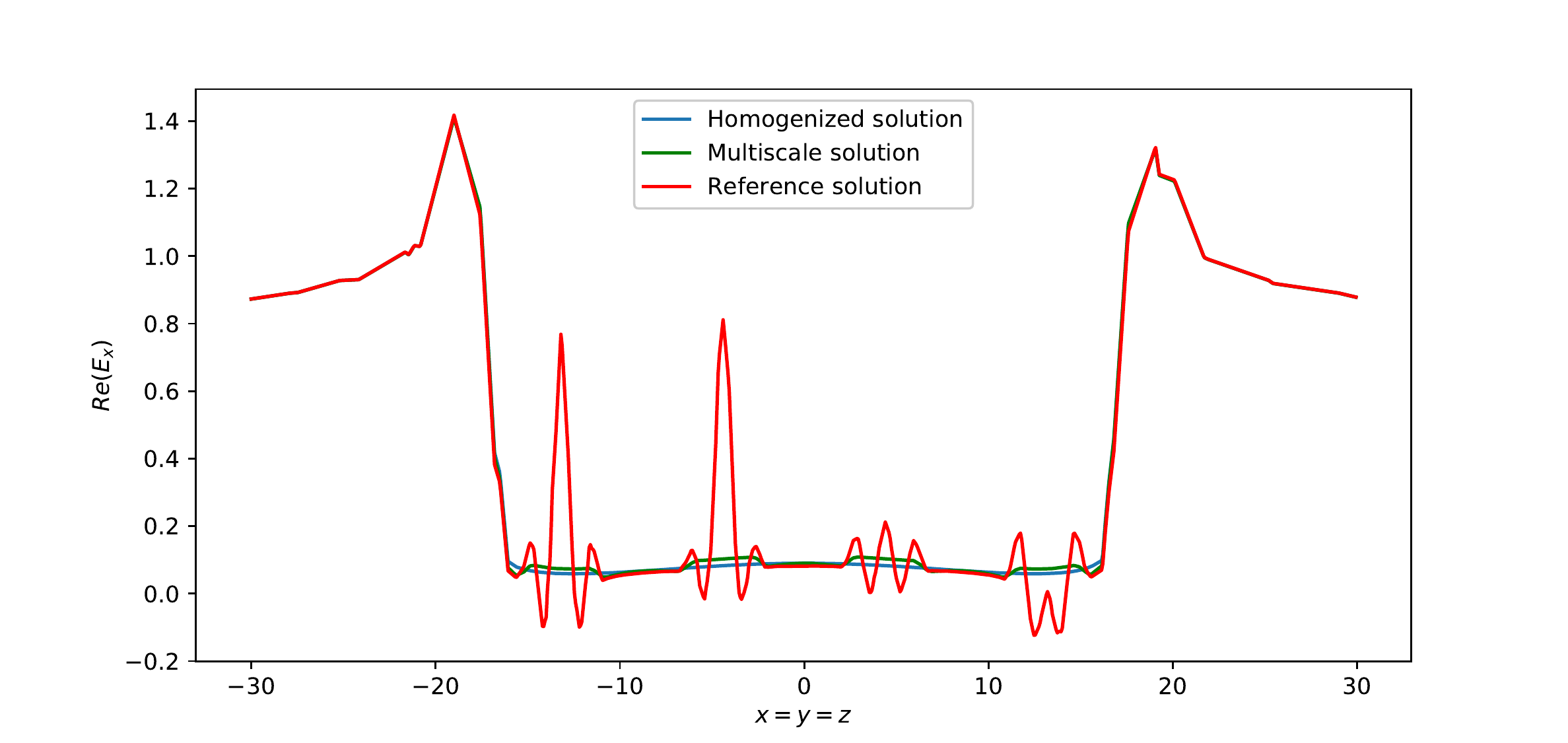}
\caption{Display of the electric field on the line $x=y=z$ calculated by the original multiscale method. (a): The real part of the $y$ component of the electric field in Case 5.1 at $\omega=0.75\omega_{p}$; (b): The real part of the $x$ component of the electric field in Case 5.2 at $\omega=0.48\omega_{p}$.
}\label{fig5-1-2}
\end{figure}

We first solve the problem by the original multiscale approach presented at the beginning of section~\ref{sec-4}. The numerical results for the electric field on the line $x=y=z$ in Case 5.1 and 5.2 are plotted in Fig.~\ref{fig5-1-2} (a)-(b). We clearly see that the multiscale approximate solution agrees well with the reference solution outside the metallic nanostructures while it fails to capture the oscillations of the electric field inside the metallic nanostructures.

To illustrate why the original multiscale approach fails inside the metallic nanostructures, we compute the minimal eigenvalue (denoted by $\alpha$) of the imaginary part of the homogenized coefficient $\widehat{\gamma^{\ast}}$ by solving the cell problem (\ref{eq:2-10-10-2}) numerically. The values of $\alpha/\gamma$ are displayed in Fig.~5.4 as a function of $\lambda/\gamma$, from which we see that the minimal eigenvalue $\alpha$ tends to infinity as the extension parameter $\lambda\rightarrow \infty$. This observation combined with Lemma~\ref{lem4.2} show that the electric field computed by the original multiscale approach with a large extension parameter loses the nonlocal information, leading to the failure of the original multiscale approach.
\begin{figure}\label{fig:5-1-1}
\begin{center}
\begin{minipage}{0.4\linewidth}
    \includegraphics[width=5.5cm,height=4.5cm]{./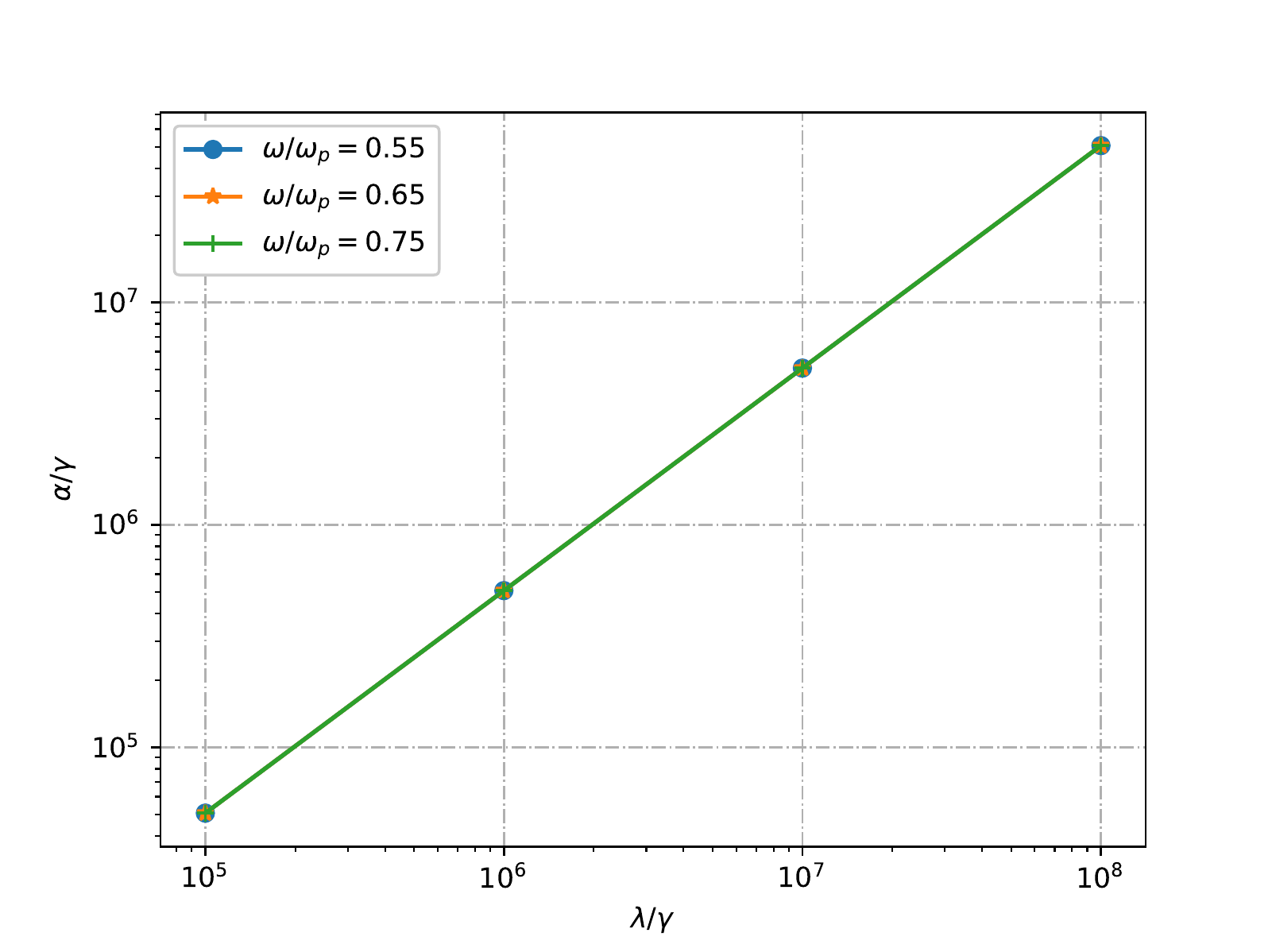}
\end{minipage}
    \caption{The minimal eigenvalue $\alpha$ of $\widehat{\gamma^{\ast}}$ (logarithmic scale).}
\end{center}
\end{figure}

Next we use the modified multiscale approach to solve the problem. The computational costs of solving the problem directly and solving by the modified multiscale approach for Case 5.1 at $\omega=0.75\omega_{p}$ are given in Table~\ref{table4}, which clearly show that our method is capable to reduce the computational cost dramatically.

\begin{table}[t]
\footnotesize
\caption{Comparison of computational costs for Case 5.1 at $\omega=0.75\omega_{p}$.}
\begin{center}
\begin{tabular}{ccccc}\hline
 & original problem & cell problem & homogenized problem & modified problem\\
\hline Elements & 2166403  & 68053 & 514247 & 12999 \\
 Dof & 31032404  & 13749 & 1231616 & 197766 \\
CPU Time (s) &3881.9739 &1.2144 &58.3542 &23.4486\\
\hline
\end{tabular}
\end{center}
\label{table4}
\end{table}

Without confusion, we denote by $({\bf E}_{\eta},{\bf J}_{\eta})$, ${\bf E}^{0}$, ${\bf E}^{0}_{\eta}$, $({\bf E}^{M}_{\eta}, {\bf J}^{M}_{\eta})$ the reference solution of the original problem, the homogenized solution for the electric field by solving the homogenized Maxwell equations (\ref{eq:4.6}) numerically, the multiscale approximate solution for the electric field, and the modified multiscale approximate solution by solving (\ref{eq:4.7}) inside each metallic nanostructure numerically, respectively. The numerical errors of the modified multiscale approach for Case 5.1 and 5.2 are displayed in Table~\ref{table2} and~\ref{table3}, respectively. Here we abbreviate $\|\cdot\|_{\mathbf{H}_{T}({\bf curl};\,\Omega)}$ and $\|\cdot\|_{\mathbf{H}({\bf div};\,\Omega_{s})}$ as $\|\cdot\|_{\Omega}$ and $\|\cdot\|_{\Omega_{s}}$ for brevity.

\begin{table}[t]
\centering
    \caption{Comparison of the numerical errors in Case~\ref{case5}.}\label{table2}
\begin{tabular}{ccccc}
\hline $\omega/ \omega_p$
     &
    $\frac{\|{\bf E}^{0}-\mathbf{E}_{\eta}\|_{\Omega}}{\|\mathbf{E}_{\eta}\|_{\Omega}}$ &
    $\frac{\|{\bf E}^{0}_{\eta}-\mathbf{E}_{\eta}\|_{\Omega}}{\|\mathbf{E}_{\eta}\|_{\Omega}}$&
    $\frac{\|{\bf E}^{M}_{\eta}-\mathbf{E}_{\eta}\|_{\Omega}}{\|\mathbf{E}_{\eta}\|_{\Omega}}$&
$\frac{\|\mathbf{J}^{M}_{\eta}-\mathbf{J}_{\eta}\|_{\Omega_s}}{\|\mathbf{J}_{\eta}\|_{\Omega_s}}$ \\
\noalign{\smallskip}\hline\noalign{\smallskip}
    0.55 & 0.021147 &0.021449&0.017761 &0.184561 \\
    0.65 & 0.019662 &0.015671&0.014059 &0.178703 \\
    0.75 & 0.020467 &0.013447&0.012417 &0.146900 \\
\noalign{\smallskip}\hline
\end{tabular}
\end{table}

\begin{table}[t]
\centering
    \caption{Comparison of the numerical errors in Case~\ref{case6}.}\label{table3}
\begin{tabular}{ccccc}
\hline $\omega/ \omega_p$
     &
     $\frac{\|{\bf E}^{0}-\mathbf{E}_{\eta}\|_{\Omega}}{\|\mathbf{E}_{\eta}\|_{\Omega}}$ &
    $\frac{\|{\bf E}^{0}_{\eta}-\mathbf{E}_{\eta}\|_{\Omega}}{\|\mathbf{E}_{\eta}\|_{\Omega}}$&
    $\frac{\|{\bf E}^{M}_{\eta}-\mathbf{E}_{\eta}\|_{\Omega}}{\|\mathbf{E}_{\eta}\|_{\Omega}}$&
$\frac{\|\mathbf{J}^{M}_{\eta}-\mathbf{J}_{\eta}\|_{\Omega_s}}{\|\mathbf{J}_{\eta}\|_{\Omega_s}}$ \\
\noalign{\smallskip}\hline\noalign{\smallskip}
    0.48 & 0.028744 &0.028645&0.008425 &0.091273 \\
    0.58 & 0.035826 &0.034270&0.009712 &0.046189 \\
    0.68 & 0.034353 &0.024464&0.022433 &0.140880 \\
\noalign{\smallskip}\hline
\end{tabular}
\end{table}

To show the accuracy of the modified multiscale method, in Fig.~\ref{fig5-1-3} we compare the modified multiscale approximate solution with the reference solution for the electric field in the same setting as Fig.~\ref{fig5-1-2}. We find that the modified multiscale approximate solution for the electric field is in good agreement with the reference solution in the whole computational domain.

\begin{figure}[!htbp]
\begin{center}
\tiny{(a)}\includegraphics[width=12cm,height=5.7cm]{./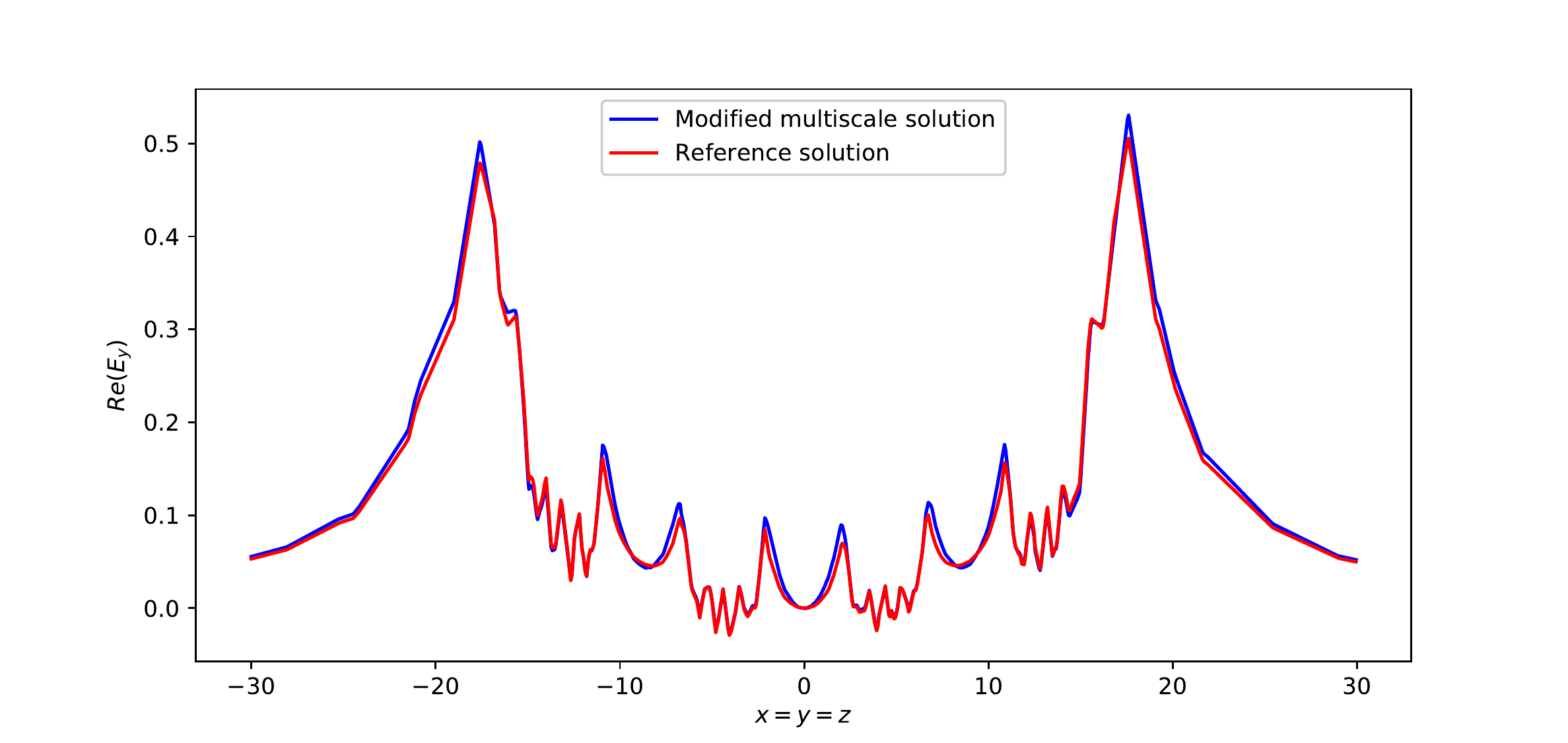}

\tiny{(b)}\includegraphics[width=12cm,height=5.7cm]{./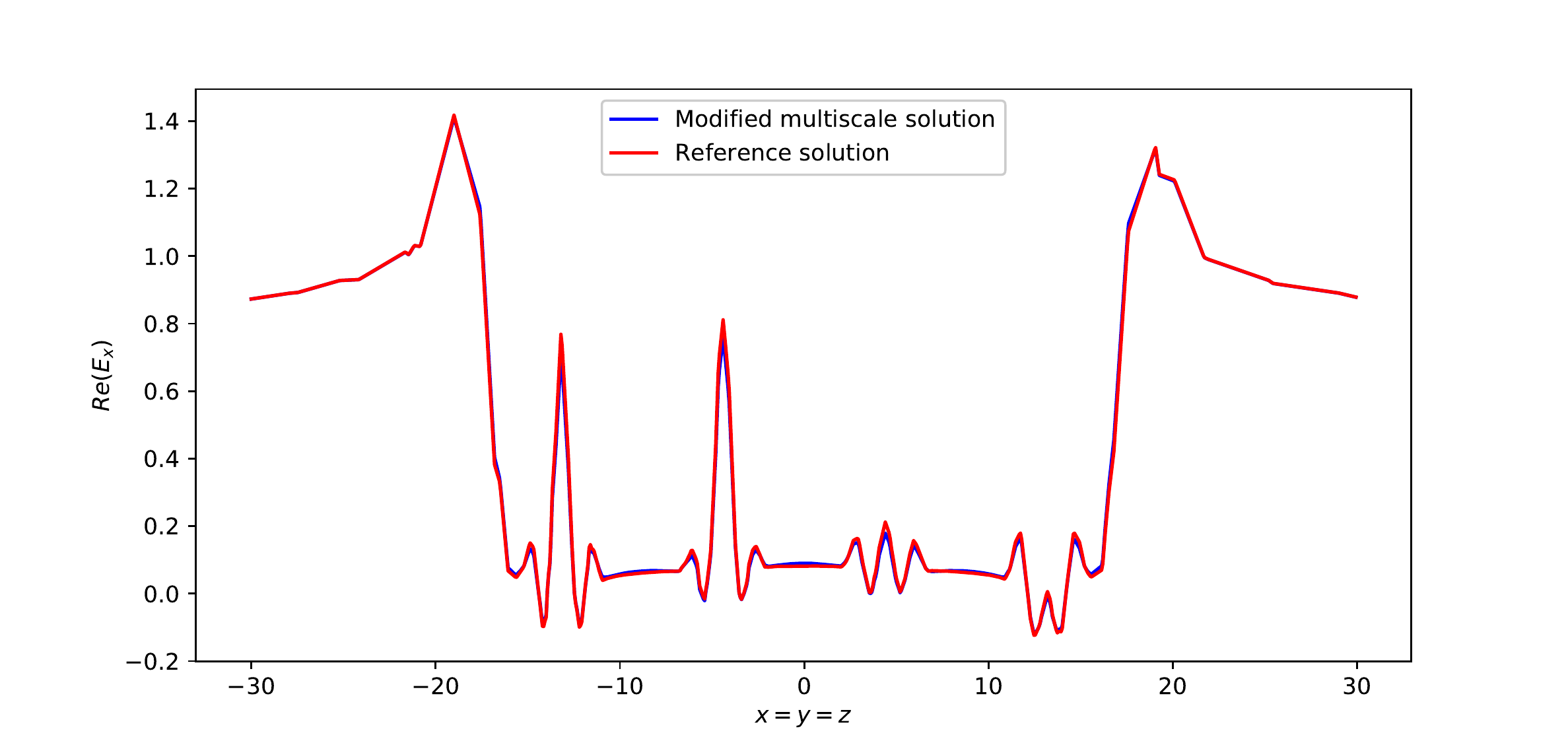}
\caption{Display of the electric field on the line $x=y=z$ calculated by the modified multiscale method. (a): The real part of the $y$ component of the electric field in Case 5.1 at $\omega=0.75\omega_{p}$; (b): The real part of the $x$ component of the electric field in Case 5.2 at $\omega=0.48\omega_{p}$.
}\label{fig5-1-3}
\end{center}
\end{figure}

Fig.~\ref{fig5-1-4} (a)-(d) display the numerical results for the electric field based
on the modified multiscale approach on the intersection $y=1.5$\,nm at $\omega=0.75\omega_p$ in Case 5.1.

In Fig.~\ref{fig5-1-5} (a)-(d), we compare the multiscale approximate solution with the reference solution for the polarization current on the intersection $y=1.5$\,nm in Case 5.1 and 5.2.

\begin{figure}[!htbp]
\centering
{\tiny(a)}\includegraphics[width=6.5cm,height=5cm]{./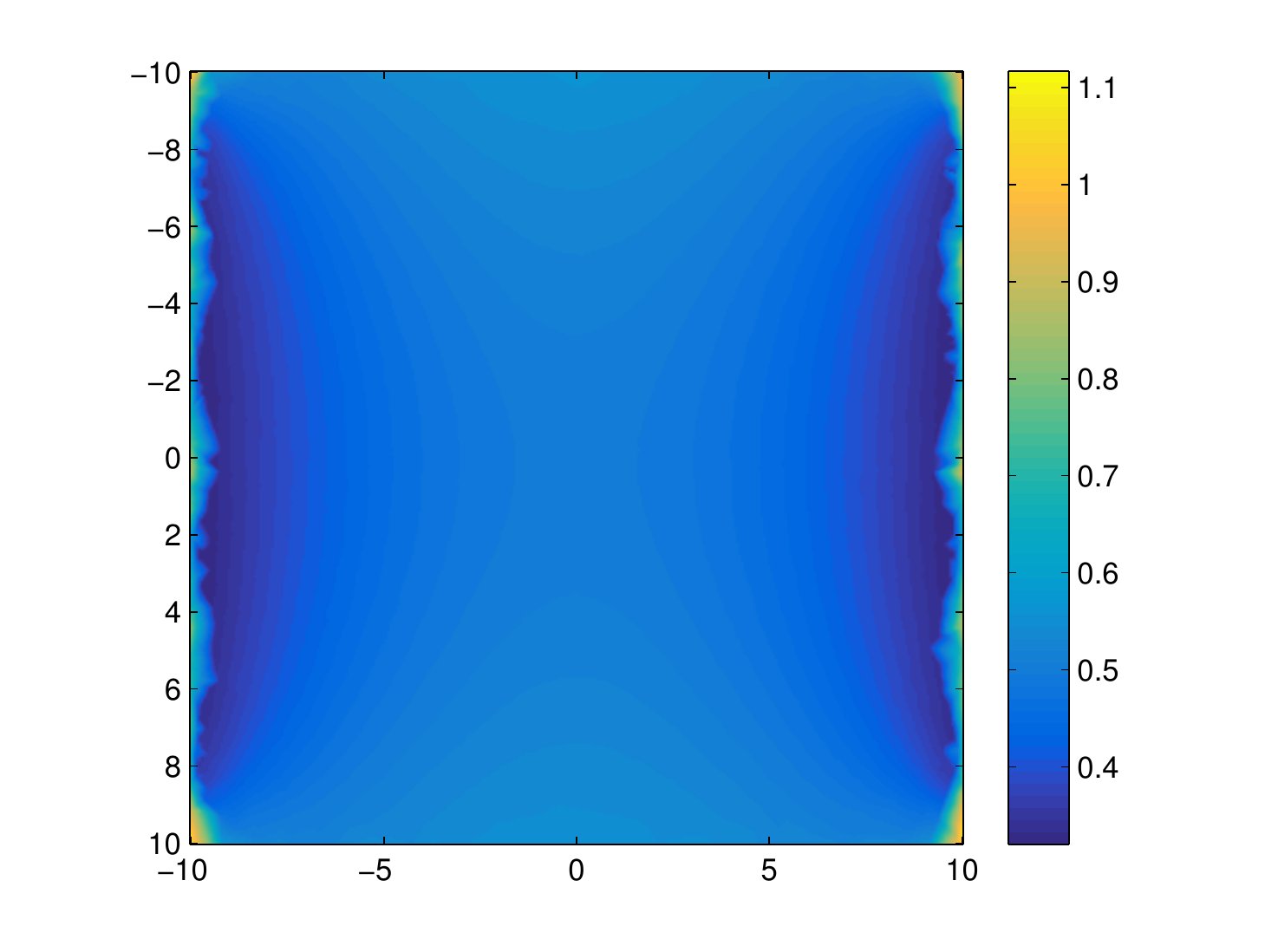}~
{\tiny(b)}\includegraphics[width=6.5cm,height=5cm]{./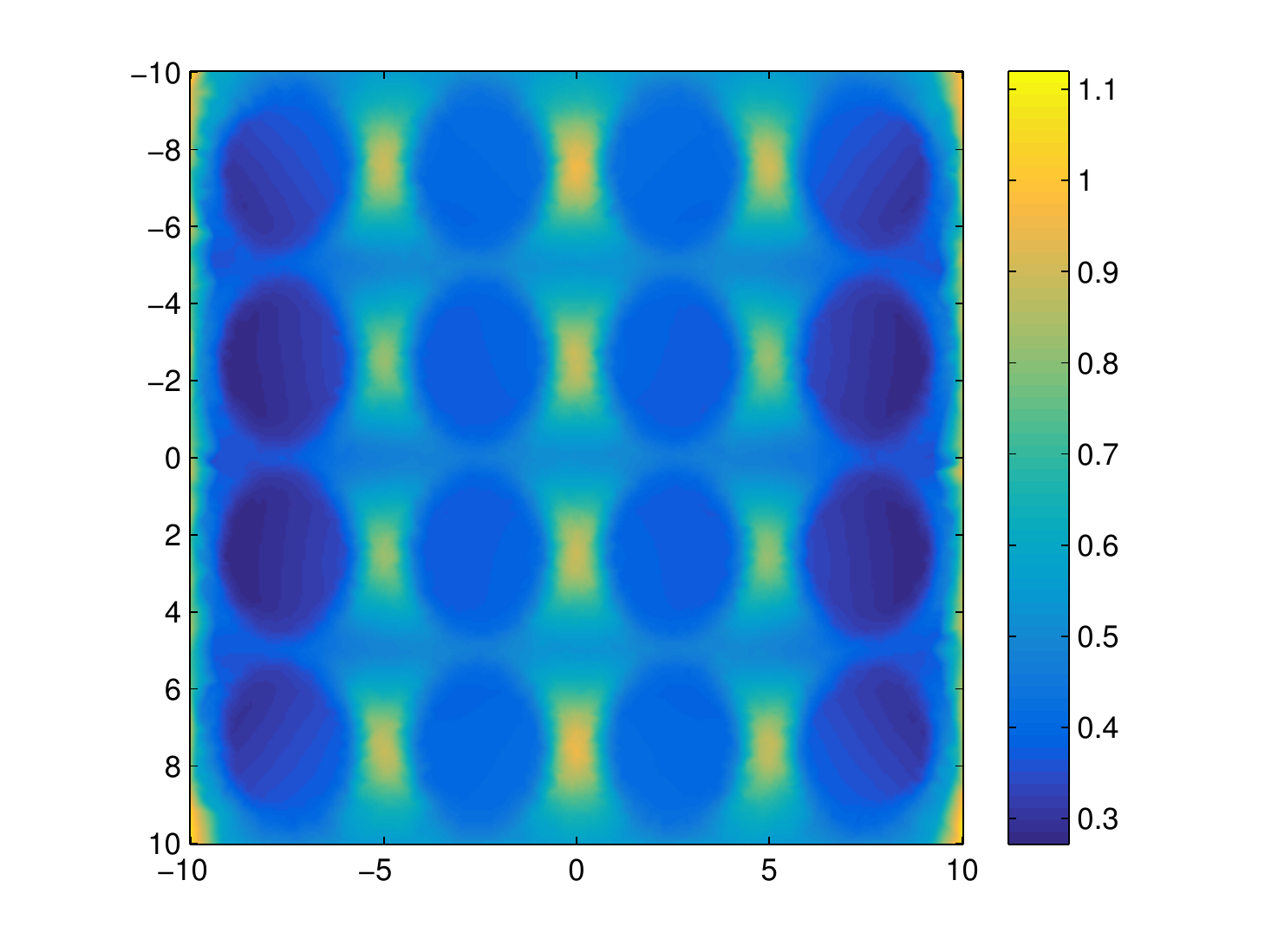}
{\tiny(c)}\includegraphics[width=6.5cm,height=5cm]{./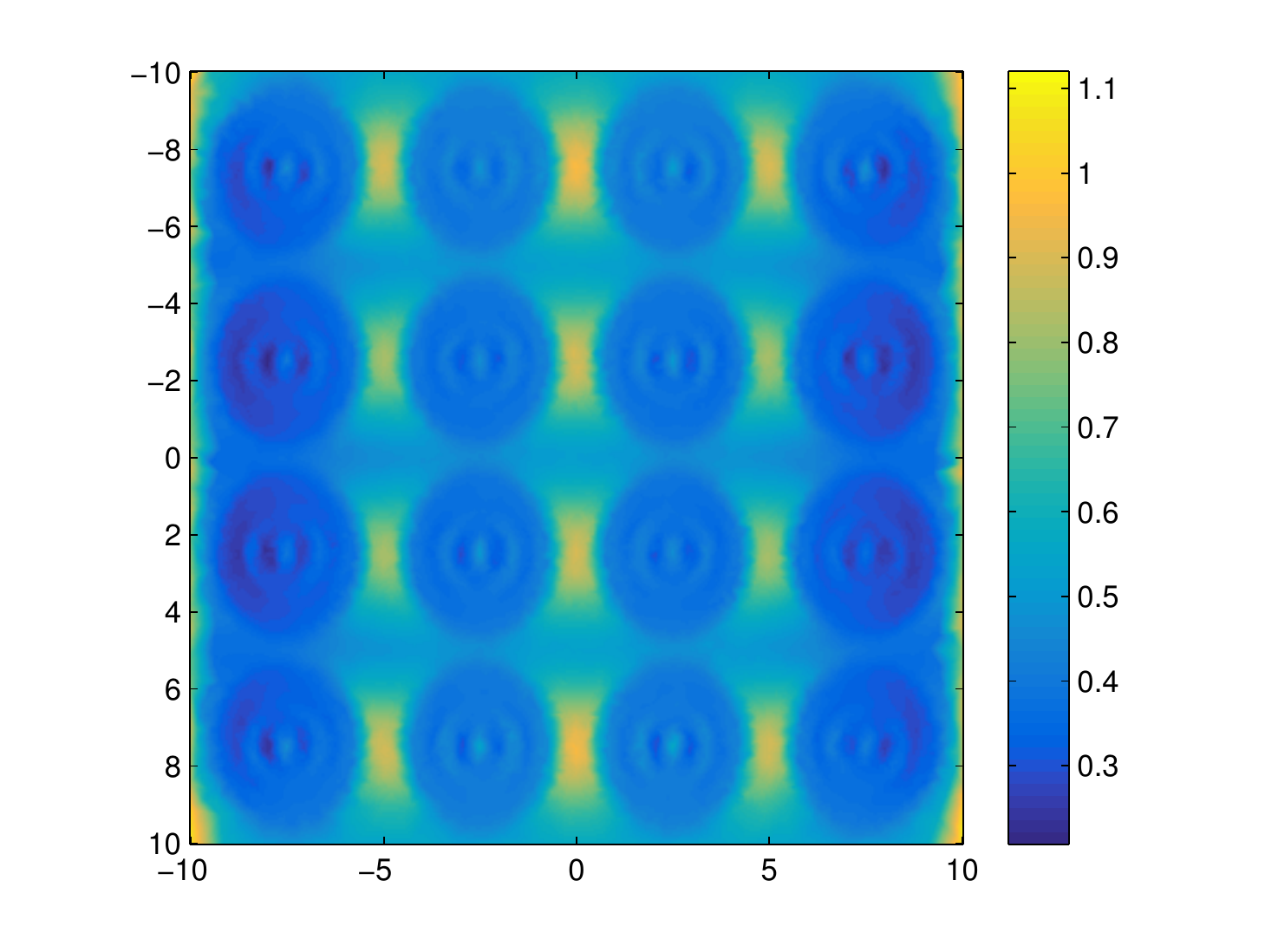}~
{\tiny(d)}\includegraphics[width=6.5cm,height=5cm]{./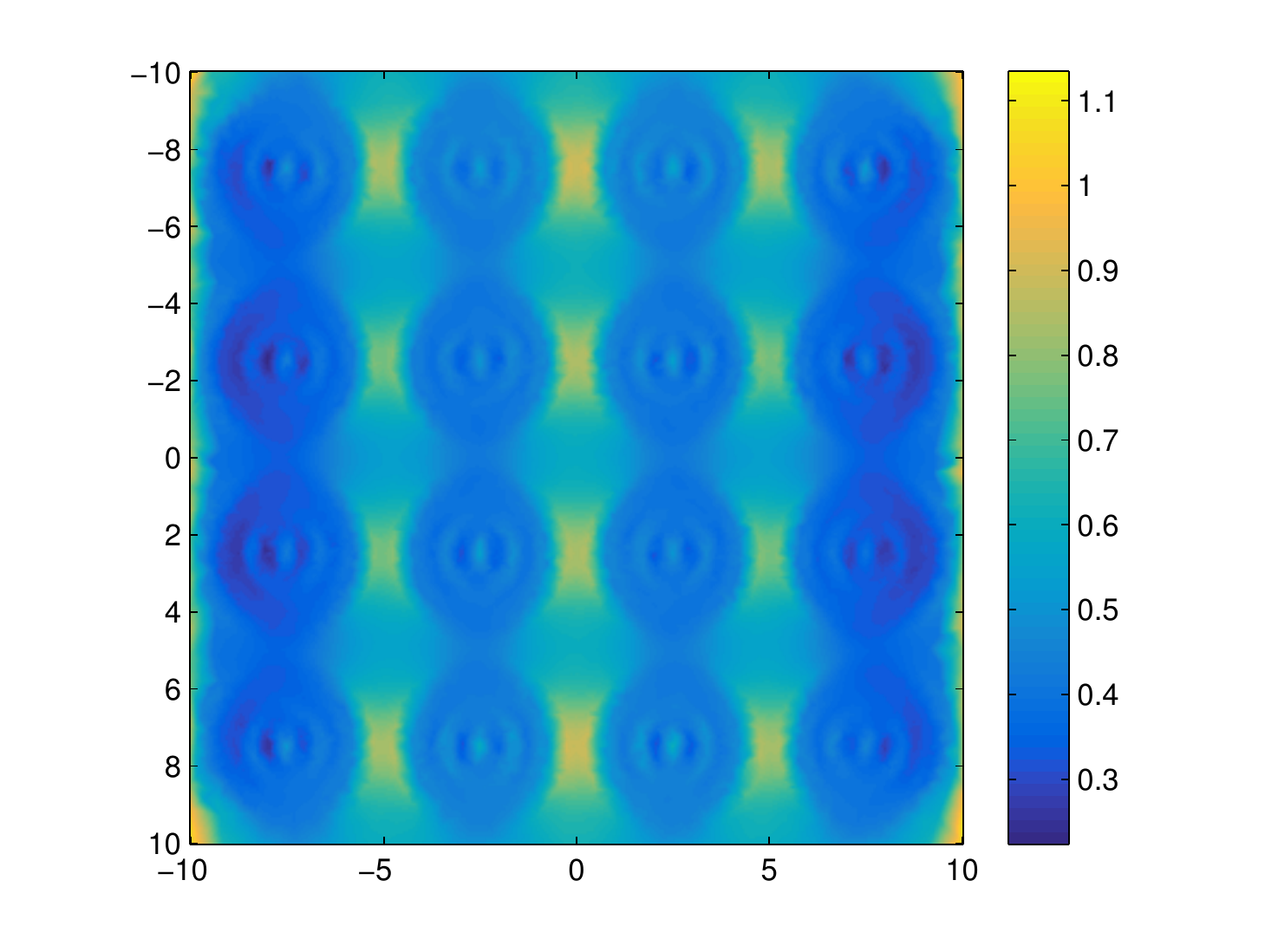}
    \caption{The $x$-component of the electric field $\mathbf{E}$ on the intersection $y=1.5$\,nm at $\omega=0.75\omega_p$ in Case~\ref{case5}:
    {\rm (a)} the homogenized solution $|\mathbf{E}^{0}_{x}|$; {\rm (b)} the multiscale approximate solution $|\mathbf{E}^{0}_{\eta,x}|$;
    {\rm (c)} the modified multiscale approximate solution $|\mathbf{E}^{M}_{\eta,x}|$; {\rm (d)} the reference solution $|{\mathbf{E}}_{\eta,x}|$.
}\label{fig5-1-4}
\end{figure}

\begin{figure}[!htbp]
\centering
{\tiny(a)}\includegraphics[width=6.5cm,height=5cm]{./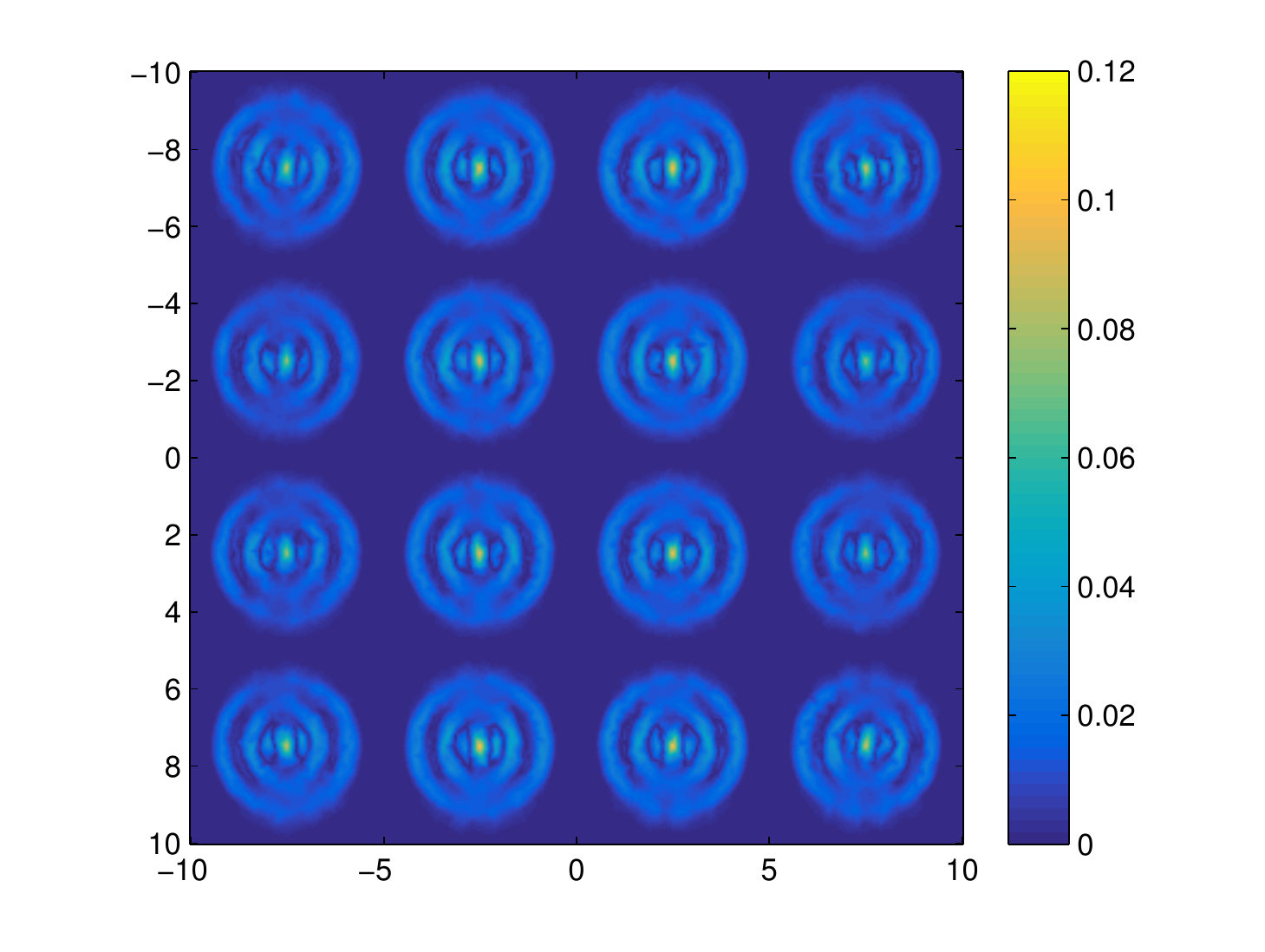}~
{\tiny(b)}\includegraphics[width=6.5cm,height=5cm]{./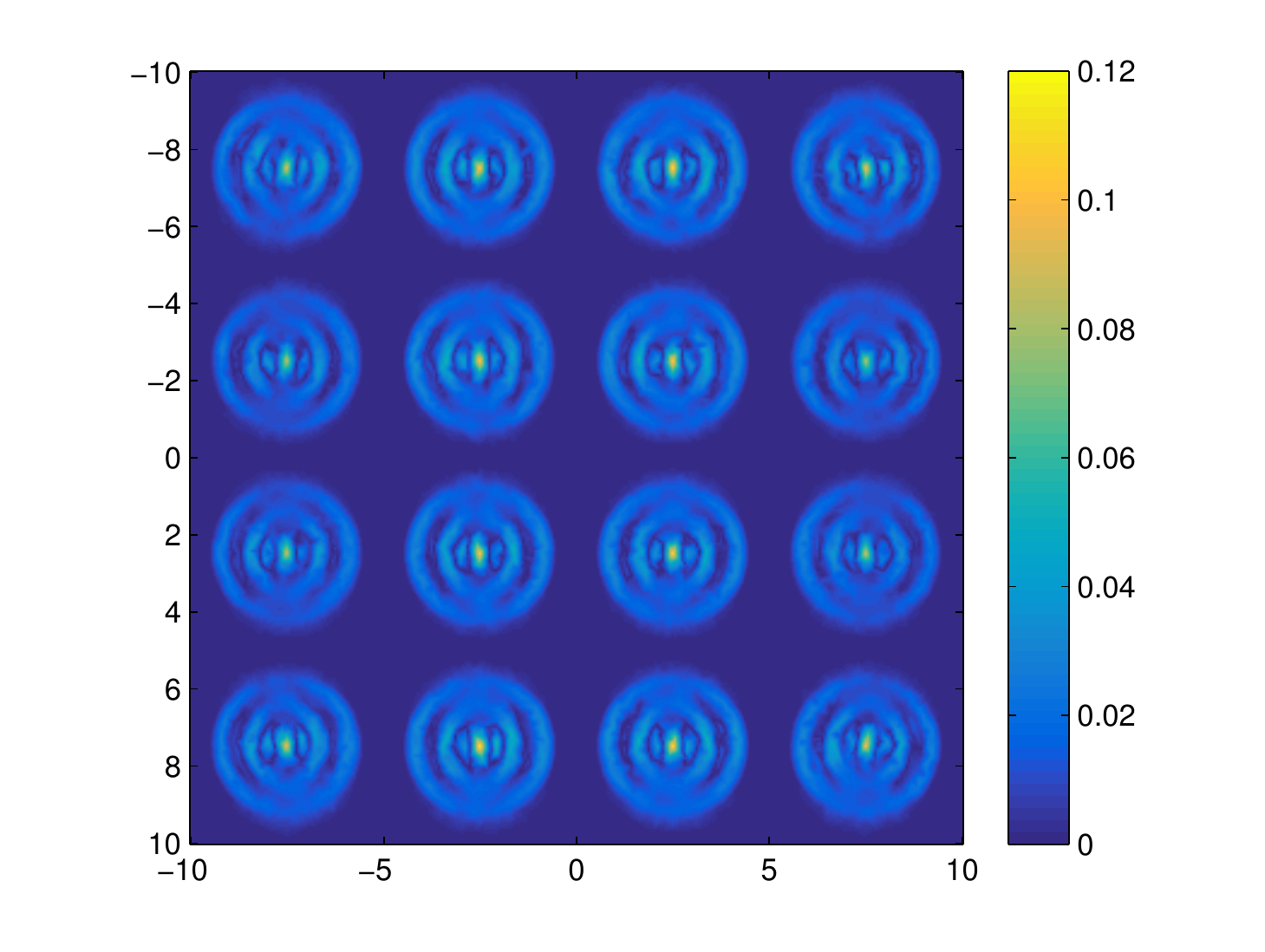}
{\tiny(a)}\includegraphics[width=6.5cm,height=5cm]{./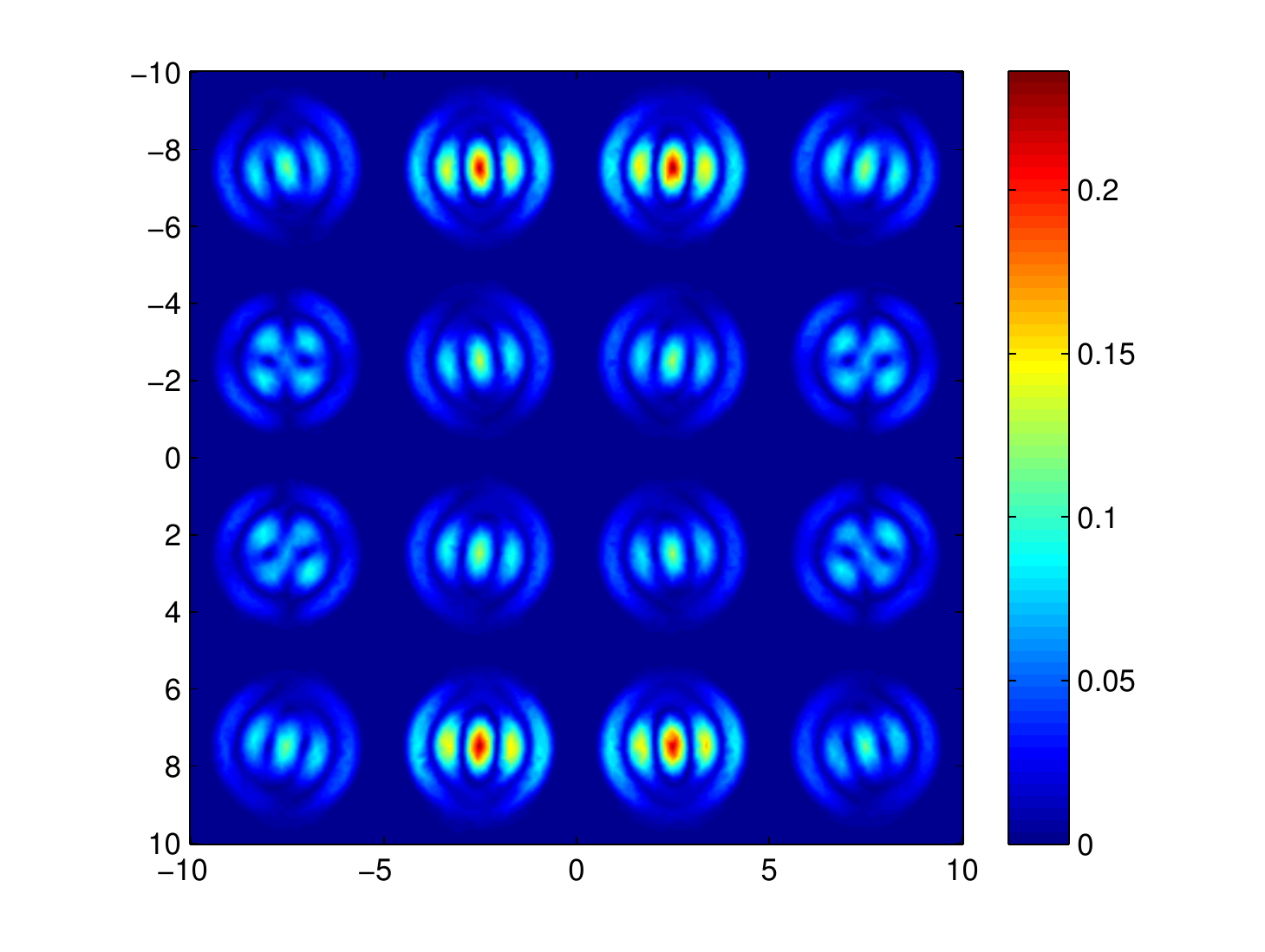}~
{\tiny(b)}\includegraphics[width=6.5cm,height=5cm]{./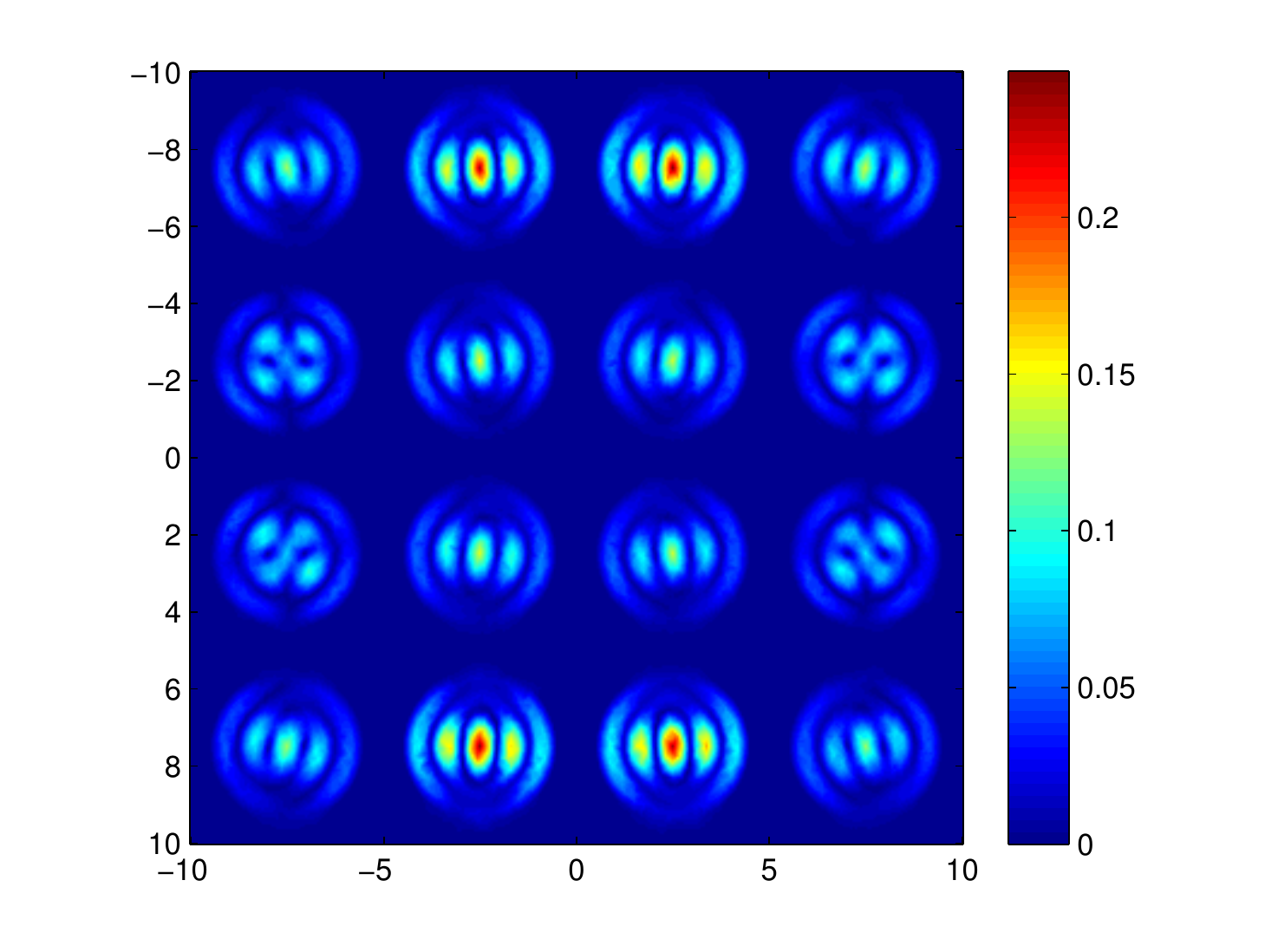}
    \caption{The $x$-component of the polarization current $\mathbf{J}$ on the intersection $y=1.5$\,nm:
    {\rm (a)} the modified multiscale approximate solution $|\mathbf{J}^{M}_{\eta,x}|$ in Case 5.1 at $\omega=0.75\omega_{p}$; {\rm (b)} the reference solution $|\mathbf{J}_{\eta,x}|$ in Case 5.1 at $\omega=0.75\omega_{p}$;   {\rm (c)} the modified multiscale approximate solution $|\mathbf{J}^{M}_{\eta,x}|$ in Case 5.2 at $\omega=0.48\omega_{p}$; {\rm (d)} the reference solution $|\mathbf{J}_{\eta,x}|$ in Case 5.2 at $\omega=0.48\omega_{p}$.}\label{fig5-1-5}
\end{figure}

\section{Conclusions}
We have presented a novel multiscale approach to simulate the nonlocal optical response of metallic nanostructure arrays embedded in a dielectric medium by solving a nonlocal hydrodynamic Drude (NHD) model with rapidly oscillating coefficients. Our approach contains two key steps. First, we solve a homogenized problem on the whole domain, which is much easier to solve than the original system  since the coefficients are constant. Next, we solve the original system in each metallic nanostructure separately to capture the nonlocal information of the electric field. A fast algorithm based on the $LU$ decomposition is proposed to solve the resulting linear system. In this way, we make the numerical results of our approach agree well with the reference solution on the whole domain. More importantly, the computational burden of our approach is much lower than that of solving the original system directly. Numerical experiments are presented to demonstrate the accuracy and efficiency of the proposed multiscale approach.

Based on the work in this paper, in the near future we will study applications of the multiscale approach to the time-dependent NHD model to simulate the transient nonlocal optical response of metallic nanostructure arrays. Furthermore, from the view of theoretical analysis, another focus of future works is to prove the convergence of the homogenization method developed in this paper.

\appendix
\section{Proof of the error bound for the extension} We provide a proof of Theorem~\ref{thm:2.1}. To begin with, we introduce some notations used in the proof. Let ${L}^{p}(\Omega)$ and $\mathbf{L}^{p}(\Omega) = [L^{p}(\Omega)]^{3} $ be the Lebesgue spaces of complex-valued functions and vector-valued functions, respectively. $ L^{2}$ inner-products in $L^{2}(\Omega) $ and $\mathbf{L}^{2}(\Omega)$ are denoted by $(\cdot,\cdot )$ without ambiguity. To avoid confusion, we use $(\cdot,\cdot )_{s}$ and $(\cdot,\cdot )_{s,\eta}$ to denote the $ L^{2}$ inner-products in $L^{2}(\Omega_{s}) $ ($\mathbf{L}^{2}(\Omega_{s})$) and $L^{2}(\Omega_{s,\eta}) $ ($\mathbf{L}^{2}(\Omega_{s,\eta})$), respectively.

We define
\begin{equation}\label{eq2-1}
\begin{array}{lll}
{\displaystyle \mathbf{H}(\mathbf{curl};\Omega)=\{\mathbf{u}\in
\mathbf{L}^{2}(\Omega)\,|\, \,\mathbf{curl}\, \mathbf{u}\in \mathbf{L}^{2}(\Omega)
\}, }\\[2mm]
{\displaystyle \mathbf{H}(\mathbf{div};\Omega)=\{\mathbf{u}\in
\mathbf{L}^{2}(\Omega)\,|\,\, \mathbf{div}\, \mathbf{u}\in L^{2}(\Omega)
\}, }\\[2mm]
\end{array}
\end{equation}
which are equipped with the norms
\begin{equation*}
\begin{array}{lll}
{\displaystyle \|\mathbf{u}\|_{\mathbf{H}(\mathbf{curl};\Omega)}=\|\mathbf{u}\|_{\mathbf{L}^2(\Omega)}+\|\mathbf{curl}\, \mathbf{u}\|_{\mathbf{L}^2(\Omega)}, }\\[2mm]
{\displaystyle \|\mathbf{u}\|_{\mathbf{H}(\mathbf{div};\Omega)}=\|\mathbf{u}\|_{\mathbf{L}^2(\Omega)}+\|\mathbf{div}\, \mathbf{u}\|_{L^2(\Omega)}. }
\end{array}
\end{equation*}
In addition,
\begin{equation}\label{eq2-2}
\begin{array}{lll}
{\displaystyle \mathbf{H}_{T}(\mathbf{curl};\Omega)=\{\mathbf{u}\in
\mathbf{H}(\mathbf{curl};\Omega)\, |  \,\, \mathbf{u}_{T} = ({\bf n}\times {\bf u})\times{\bf n} \in {\bf L}^{2}(\partial \Omega)  \,\, \,{\rm on}\, \,\partial \Omega\}, }\\[2mm]
{\displaystyle \mathbf{H}_{0}(\mathbf{curl};\Omega)=\{\mathbf{u}\in
\mathbf{H}(\mathbf{curl};\Omega)\, |  \,\, \mathbf{u}\times \mathbf{n}={\bf 0} \,\, \,{\rm on}\, \,\partial \Omega\}, }\\[2mm]
{\displaystyle \mathbf{H}_{0}(\mathbf{div};\Omega)=\{\mathbf{u}\in
\mathbf{H}(\mathbf{div};\Omega)\, |  \,\, \mathbf{u}\cdot\mathbf{n}=0 \,\, \,{\rm on}\, \,\partial \Omega\}. }\\[2mm]
\end{array}
\end{equation}
Functions in $\mathbf{H}_{T}(\mathbf{curl};\Omega)$ are equipped with the norm
\begin{equation}
\|\mathbf{u}\|_{\mathbf{H}_{T}(\mathbf{curl};\Omega)}=\|\mathbf{u}\|_{\mathbf{L}^2(\Omega)}+\|\mathbf{curl}\, \mathbf{u}\|_{\mathbf{L}^2(\Omega)} + \|\mathbf{u}_{T}\|_{\mathbf{L}^2(\partial \Omega)}.
\end{equation}
For the sake of convenience, we denote by
\begin{equation}\label{eq2-3}
\begin{array}{lll}
{\displaystyle X(\Omega) =\mathbf{H}_{T}(\mathbf{curl};\Omega),\;\qquad \|\mathbf{u}\|_{X(\Omega)}= \|\mathbf{u}\|_{\mathbf{H}_T(\mathbf{curl};\Omega)} , }\\[2mm]
{\displaystyle Y(\Omega_s) = \mathbf{H}_{0}(\mathbf{div};\Omega_s), \;\qquad \|\mathbf{v}\|_{Y(\Omega_s)} = \|\mathbf{v}\|_{\mathbf{H}(\mathbf{div};\Omega_s)}, }\\[2mm]
{\displaystyle Y(\Omega_{s,\eta}) = \mathbf{H}_{0}(\mathbf{div};\Omega_{s,\eta}), \quad \|\mathbf{v}\|_{Y(\Omega_{s,\eta})} = \|\mathbf{v}\|_{\mathbf{H}(\mathbf{div};\Omega_{s,\eta})}.}
\end{array}
\end{equation}

The weak formulation of the original system (\ref{eq:2.7}) reads as follows: Find $(\mathbf{E}_{\eta},{\bf J}_{\eta}) \in X(\Omega)\times Y(\Omega_{s,\eta})$, such that the equations
 \begin{equation}\label{eq2-4}
\left\{
\begin{array}{@{}l@{}}
    {\displaystyle  \big(\tilde{\mu}_{\eta}^{-1}\mathbf{curl}\,{\bf E}_{\eta}, \,\mathbf{curl}\,{\bf u}) - \omega^{2}(\tilde{\varepsilon}_\eta{\bf E}_{\eta}, \,{\bf u}) - {\mathrm {{i}}}\omega \langle {\bf E}_{\eta,T}, {\bf u}_{T}\rangle = \langle {\bf g}, {\bf u}_{T}\rangle + {\mathrm {{i}}}\omega\big({\bf J}_{\eta}, \,{\bf u}\big)_{s,\eta}}\\[2mm]
    {\displaystyle \big(\beta^{2}\mathbf{div}\, {\bf J}_{\eta},\,\mathbf{div}\, {\bf v}\big)_{s,\eta}-(\omega(\omega+{\mathrm {{i}}}\gamma){\bf J}_{\eta},\,{\bf v})_{s,\eta} = -{\mathrm {{i}}}\omega\omega^{2}_{p}\varepsilon_{0} \big({\bf E}_{\eta},\,{\bf v}\big)_{s,\eta}}
\end{array}
\right.
\end{equation}
hold for each $({\bf u}, {\bf v}) \in X(\Omega)\times Y(\Omega_{s,\eta})$, where $\tilde{\mu}_{\eta}$ and $\tilde{\varepsilon}_\eta$ are given by
\begin{equation}
\tilde{{\mu}}_{\eta} =
\left\{
\begin{array}{ll}
  {{\mu}_{\eta}},\ &{\rm in}\,\,\; \Omega_{s} \\[2mm]
{\mu_0} ,\ &{\rm in}\,\, \;\Omega/{\Omega}_{s},
\end{array}
\right.
\qquad
\tilde{{\varepsilon}}_{\eta} =
\left\{
\begin{array}{ll}
   {{\varepsilon}_{\eta}},\ &{\rm in}\,\,\; \Omega_{s} \\[2mm]
  {\varepsilon_0} ,\ &{\rm in}\,\,\; \Omega/{\Omega}_{s},
\end{array}
\right.
\end{equation}
${\bf u}_{T} = ({\bf n}\times {\bf u})\times {\bf n}$ with ${\bf n}$ being the unit outward normal to $\partial \Omega$, and $\langle\cdot,\cdot\rangle$ denotes the $L^{2}$ inner product in $\mathbf{L}^{2}(\partial \Omega)$. Note that here we use $\mathbf{E}_{\lambda}$ to denote the electric field on the whole domain for convenience.

Similarly, we can give the weak formulation for the extended system (\ref{eq:2.9}). Find $(\mathbf{E}_{\eta,\lambda},{\bf J}_{\eta,\lambda}) \in X(\Omega)\times Y(\Omega_{s})$, such that the equations
 \begin{equation}\label{eq2-4-0}
\left\{
\begin{array}{@{}l@{}}
    {\displaystyle  \big(\tilde{\mu}_{\eta}^{-1}\mathbf{curl}\,{\bf E}_{\eta,\lambda}, \,\mathbf{curl}\,{\bf u}) - \omega^{2}(\tilde{\varepsilon}_\eta{\bf E}_{\eta,\lambda}, \,{\bf u}) - {\mathrm {{i}}}\omega \langle {\bf E}_{\eta,\lambda,T}, {\bf u}_{T}\rangle = \langle {\bf g}, {\bf u}_{T}\rangle + {\mathrm {{i}}}\omega\big({\bf J}_{\eta,\lambda}, \,{\bf u}\big)_{s}}\\[2mm]
    {\displaystyle \big(\beta^{2}_{\eta,\lambda}\mathbf{div}\, {\bf J}_{\eta,\lambda},\,\mathbf{div}\, {\bf v}\big)_{s}-(\omega(\omega+{\mathrm {{i}}}\gamma_{\eta,\lambda}){\bf J}_{\eta,\lambda},\,{\bf v})_{s} = -{\mathrm {{i}}}\omega\omega^{2}_{p}\varepsilon_{0} \big({\bf E}_{\eta,\lambda},\,{\bf v}\big)_{s}}
\end{array}
\right.
\end{equation}
hold for each $({\bf u}, {\bf v}) \in X(\Omega)\times Y(\Omega_{s})$. In the same spirit, here $\mathbf{E}_{\eta,\lambda}$ denotes the electric field on the whole domain.

The following lemma gives the well-posedness of the equations (\ref{eq2-4}) and (\ref{eq2-4-0}).
\begin{lemma}{\rm \cite[{\rm Theorem 3.1}]{ma2019mathematical}}
Let $\Omega$, $\Omega_s$, and $\Omega_{s,\eta}$ be the bounded, simply-connected, Lipschitz domains in $\mathbb{R}^{3}$ with $\bar{\Omega}_{s}\subset \Omega$ and $\bar{\Omega}_{s,\eta}\subset \Omega_{s}$. Assumes that $\mathbf{g}\in \mathbf{L}^{2}(\partial \Omega)$. There exist a unique solution $({\bf E}_{\eta},\,{\bf J}_{\eta}) \in X(\Omega)\times Y(\Omega_{s,\eta})$ to the equations (\ref{eq2-4}) satisfying
\begin{equation}\label{eq2-7}
\Vert {\bf E}_{\eta} \Vert_{X(\Omega)} + \Vert {\bf J}_{\eta} \Vert_{Y(\Omega_{s,\eta})} \leq C \Vert {\bf g}\Vert_{{\bf L}^{2}(\partial \Omega)},
\end{equation}
where $C$ is a positive constant. Similarly, there exist a unique solution $({\bf E}_{\eta,\lambda},\,{\bf J}_{\eta,\lambda}) \in X(\Omega)\times Y(\Omega_{s})$ to the equations (\ref{eq2-4-0}) satisfying
\begin{equation}\label{eq2-7-0}
\Vert {\bf E}_{\eta,\lambda} \Vert_{X(\Omega)} + \Vert {\bf J}_{\eta,\lambda} \Vert_{Y(\Omega_{s})} \leq C \Vert {\bf g}\Vert_{{\bf L}^{2}(\partial \Omega)}.
\end{equation}
\end{lemma}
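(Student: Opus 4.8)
The plan is to decouple the system by eliminating the polarization current and thereby reduce both (\ref{eq2-4}) and (\ref{eq2-4-0}) to a single time-harmonic Maxwell problem with an impedance boundary condition plus a compact lower-order perturbation, to which the standard G\aa rding-inequality-plus-Fredholm machinery applies. Since the two weak formulations have identical structure --- they differ only in that the constants $\gamma,\beta^{2}$ and the domain $\Omega_{s,\eta}$ in (\ref{eq2-4}) are replaced by the coefficients $\gamma_{\eta,\lambda},\beta^{2}_{\eta,\lambda}$ and the domain $\Omega_{s}$ in (\ref{eq2-4-0}), with $\gamma_{\eta,\lambda}\ge\min(\gamma,\lambda)>0$ and $\beta^{2}_{\eta,\lambda}\ge\min(\beta^{2},\lambda)>0$ --- I would carry out the argument for (\ref{eq2-4}) and observe that it transfers verbatim to (\ref{eq2-4-0}).

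First I would solve the second equation for $\mathbf{J}_{\eta}$ in terms of $\mathbf{E}_{\eta}$. For fixed $\mathbf{E}_{\eta}$ the sesquilinear form $b(\mathbf{J},\mathbf{v})=\big(\beta^{2}\mathbf{div}\,\mathbf{J},\mathbf{div}\,\mathbf{v}\big)_{s,\eta}-\big(\omega(\omega+\mathrm{i}\gamma)\mathbf{J},\mathbf{v}\big)_{s,\eta}$ on $Y(\Omega_{s,\eta})$ is bounded and, crucially, coercive after rotation: writing $b(\mathbf{J},\mathbf{J})=\beta^{2}\|\mathbf{div}\,\mathbf{J}\|^{2}-\omega^{2}\|\mathbf{J}\|^{2}-\mathrm{i}\omega\gamma\|\mathbf{J}\|^{2}$, one checks that for an angle $\theta\in(-\pi/2,-\arctan(\omega/\gamma))$ the quantity $\mathrm{Re}\big(e^{-\mathrm{i}\theta}b(\mathbf{J},\mathbf{J})\big)$ controls both $\|\mathbf{div}\,\mathbf{J}\|^{2}$ and $\|\mathbf{J}\|^{2}$, so that $\mathrm{Re}\big(e^{-\mathrm{i}\theta}b(\mathbf{J},\mathbf{J})\big)\ge c\|\mathbf{J}\|^{2}_{Y(\Omega_{s,\eta})}$. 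Here the damping $\gamma>0$ is exactly what supplies the $\mathbf{L}^{2}$-control on the divergence-free part of $\mathbf{J}$ that the $\mathbf{div}$-term alone cannot see. By Lax--Milgram there is then a bounded solution operator $\mathcal{S}:\mathbf{L}^{2}(\Omega_{s,\eta})\to Y(\Omega_{s,\eta})$, $\mathbf{E}_{\eta}\mapsto\mathbf{J}_{\eta}$, with $\|\mathcal{S}\mathbf{E}_{\eta}\|_{Y(\Omega_{s,\eta})}\le C\|\mathbf{E}_{\eta}\|_{\mathbf{L}^{2}(\Omega_{s,\eta})}$.

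Substituting $\mathbf{J}_{\eta}=\mathcal{S}\mathbf{E}_{\eta}$ into the first equation yields a reduced problem for $\mathbf{E}_{\eta}\in X(\Omega)$ alone, namely the impedance Maxwell form $a(\mathbf{E},\mathbf{u})=\big(\tilde\mu_{\eta}^{-1}\mathbf{curl}\,\mathbf{E},\mathbf{curl}\,\mathbf{u}\big)-\omega^{2}\big(\tilde\varepsilon_{\eta}\mathbf{E},\mathbf{u}\big)-\mathrm{i}\omega\langle\mathbf{E}_{T},\mathbf{u}_{T}\rangle$ augmented by the bounded zeroth-order term $-\mathrm{i}\omega(\mathcal{S}\mathbf{E},\mathbf{u})_{s,\eta}$. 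Following the framework of \cite{Monk}, I would use a Helmholtz/Hodge decomposition of $X(\Omega)$ together with the Weber-type compact embedding of $\{\mathbf{u}\in\mathbf{H}(\mathbf{curl};\Omega):\mathbf{div}(\tilde\varepsilon_{\eta}\mathbf{u})\in L^{2}\}$ into $\mathbf{L}^{2}(\Omega)$ to establish a G\aa rding inequality, the terms $-\omega^{2}(\tilde\varepsilon_{\eta}\mathbf{E},\mathbf{u})$ and $-\mathrm{i}\omega(\mathcal{S}\mathbf{E},\mathbf{u})_{s,\eta}$ being compact perturbations. The Fredholm alternative then reduces existence to uniqueness.

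For uniqueness I would set $\mathbf{g}=\mathbf{0}$, test the two equations of (\ref{eq2-4}) against $\mathbf{E}_{\eta}$ and $\mathbf{J}_{\eta}$ respectively, and combine them so that the real $\mathbf{curl}$--$\mathbf{curl}$ and $\mathbf{div}$ contributions drop out of the imaginary part, leaving a sign-definite identity in which the impedance boundary term $\omega\|\mathbf{E}_{\eta,T}\|^{2}_{\mathbf{L}^{2}(\partial\Omega)}$ and the damping term proportional to $\gamma\|\mathbf{J}_{\eta}\|^{2}_{\mathbf{L}^{2}(\Omega_{s,\eta})}$ appear with the same sign. This forces $\mathbf{E}_{\eta,T}=\mathbf{0}$ on $\partial\Omega$ and $\mathbf{J}_{\eta}=\mathbf{0}$, after which the second equation gives $\mathbf{E}_{\eta}=\mathbf{0}$ on $\Omega_{s,\eta}$; the impedance condition with $\mathbf{g}=\mathbf{0}$ additionally forces $(\mu_{0}^{-1}\mathbf{curl}\,\mathbf{E}_{\eta})\times\mathbf{n}=\mathbf{0}$, so $\mathbf{E}_{\eta}$ has vanishing Cauchy data on $\partial\Omega$ for a constant-coefficient Maxwell operator in $\Omega\setminus\Omega_{s}$, and a unique continuation argument propagates $\mathbf{E}_{\eta}\equiv\mathbf{0}$. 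The a priori bounds (\ref{eq2-7})--(\ref{eq2-7-0}) then follow from the bounded invertibility supplied by Fredholm theory together with the estimate on $\mathcal{S}$. I expect the two genuinely hard points to be the compact embedding underlying the G\aa rding inequality --- which is where the Lipschitz regularity of $\Omega,\Omega_{s},\Omega_{s,\eta}$ hypothesized in the statement is really used --- and the unique continuation step; the coercivity of the $\mathbf{J}$-block and the final estimate are comparatively routine once the damping $\gamma>0$ is exploited.
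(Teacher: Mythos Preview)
The paper does not actually prove this lemma; it simply cites \cite[Theorem~3.1]{ma2019mathematical}. From the way the paper uses that reference elsewhere in the appendix (and from the sketch that appears commented out in the source), the argument in \cite{ma2019mathematical} proceeds by treating the pair $(\mathbf{E},\mathbf{J})$ jointly: one Helmholtz-decomposes \emph{both} fields, builds a single coercive sesquilinear form $a_{+}$ on $X_{0}(\Omega)\times Y_{0}(\Omega_{s,\eta})$, shows that the remaining lower-order terms define a compact operator $K$ on the product space, and applies the Fredholm alternative to $I+K$.

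Your route is genuinely different: you first eliminate $\mathbf{J}$ by inverting the $\mathbf{div}$-block via the rotated-coercivity argument (which is correct and cleanly exploits $\gamma>0$), and then run the G\aa rding/Fredholm machinery for the impedance Maxwell problem on $\mathbf{E}$ alone, with $-\mathrm{i}\omega(\mathcal{S}\mathbf{E},\mathbf{u})_{s,\eta}$ absorbed into the compact part. This Schur-complement reduction is arguably more transparent than the joint decomposition, and it lets you lean directly on the standard Maxwell theory of \cite{Monk} for the $\mathbf{E}$-problem; the price is that the constants in the final estimate depend on the norm of $\mathcal{S}$, which in turn depends on $\gamma,\beta,\omega$ in a way that the joint approach keeps slightly more explicit.

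One small point on your uniqueness step: once you have $\mathbf{J}_{\eta}=\mathbf{0}$, the first equation of (\ref{eq2-4}) is exactly the homogeneous impedance Maxwell problem, so you can invoke its uniqueness (Theorem~4.17 of \cite{Monk}, which the paper itself cites repeatedly) to conclude $\mathbf{E}_{\eta}\equiv\mathbf{0}$ in one stroke. Your detour---getting $\mathbf{E}_{\eta}=\mathbf{0}$ on $\Omega_{s,\eta}$ from the second equation and then propagating by unique continuation through the constant-coefficient region $\Omega\setminus\Omega_{s}$---leaves the intermediate shell $\Omega_{s}\setminus\Omega_{s,\eta}$ unaddressed and forces you to cross a coefficient-discontinuity interface, which is unnecessary extra work.
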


In order to estimate the error between $({\bf E}_{\eta},{\bf{J}}_{\eta})$ and $({\bf E}_{\eta,\lambda},{\bf{J}}_{\eta,\lambda})$, we need a decay estimate for the extended polarization current ${\bf J}_{\eta,\lambda}$ in $\Omega_{s}/\Omega_{s,\eta}$ with respect to $\lambda$.
\begin{lemma}\label{lemma1}
Let $({\bf E}_{\eta,\lambda}, {\bf J}_{\eta,\lambda})$ be the solution of the extended system (\ref{eq2-4-0}). There exists a constant $C>0$ independent of $\lambda$ such that
    \begin{equation}\label{eq2-7-1}
        \Vert {\bf J}_{\eta,\lambda}\Vert_{\mathbf{L}^2(\Omega_{s}/\Omega_{s,\eta})}\leq C/{\lambda}^{1/2},\,\quad \Vert \mathbf{div}\, {\bf J}_{\eta,\lambda}\Vert_{\mathbf{L}^2(\Omega_{s}/\Omega_{s,\eta})}\leq C/{\lambda}^{1/2}.
   \end{equation}
\end{lemma}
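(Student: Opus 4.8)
The plan is to use $\mathbf{J}_{\eta,\lambda}$ itself as the test function $\mathbf{v}$ in the second equation of the weak formulation (\ref{eq2-4-0}) --- which is admissible since $\mathbf{J}_{\eta,\lambda}\in Y(\Omega_s)$ --- and then to read off the two bounds from the imaginary and real parts of the resulting scalar identity. Writing $(\omega(\omega+\mathrm{i}\gamma_{\eta,\lambda})\mathbf{J}_{\eta,\lambda},\mathbf{J}_{\eta,\lambda})_s=\omega^2\|\mathbf{J}_{\eta,\lambda}\|_{\mathbf{L}^2(\Omega_s)}^2+\mathrm{i}\omega\int_{\Omega_s}\gamma_{\eta,\lambda}|\mathbf{J}_{\eta,\lambda}|^2$, that identity reads
\[ \int_{\Omega_s}\beta^{2}_{\eta,\lambda}\,|\mathbf{div}\,\mathbf{J}_{\eta,\lambda}|^2-\omega^2\|\mathbf{J}_{\eta,\lambda}\|_{\mathbf{L}^2(\Omega_s)}^2-\mathrm{i}\omega\int_{\Omega_s}\gamma_{\eta,\lambda}\,|\mathbf{J}_{\eta,\lambda}|^2=-\mathrm{i}\omega\,\omega^2_p\varepsilon_0\,(\mathbf{E}_{\eta,\lambda},\mathbf{J}_{\eta,\lambda})_s . \]
The crucial structural fact is that both $\gamma_{\eta,\lambda}$ and $\beta^{2}_{\eta,\lambda}$ equal $\lambda$ on $\Omega_s/\Omega_{s,\eta}$ and stay bounded (by $\gamma$, resp.\ $\beta^2$) on $\Omega_{s,\eta}$, so each nonnegative term on the left carries a factor $\lambda$ weighting precisely the norm we wish to control on $\Omega_s/\Omega_{s,\eta}$.

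Taking the imaginary part of this identity and discarding the nonnegative contribution of $\int_{\Omega_{s,\eta}}\gamma\,|\mathbf{J}_{\eta,\lambda}|^2$ gives $\lambda\,\|\mathbf{J}_{\eta,\lambda}\|_{\mathbf{L}^2(\Omega_s/\Omega_{s,\eta})}^2\le \omega^2_p\varepsilon_0\,\|\mathbf{E}_{\eta,\lambda}\|_{\mathbf{L}^2(\Omega_s)}\,\|\mathbf{J}_{\eta,\lambda}\|_{\mathbf{L}^2(\Omega_s)}$ after Cauchy--Schwarz; taking the real part and discarding $\int_{\Omega_{s,\eta}}\beta^2|\mathbf{div}\,\mathbf{J}_{\eta,\lambda}|^2$ gives $\lambda\,\|\mathbf{div}\,\mathbf{J}_{\eta,\lambda}\|_{\mathbf{L}^2(\Omega_s/\Omega_{s,\eta})}^2\le \omega^2\|\mathbf{J}_{\eta,\lambda}\|_{\mathbf{L}^2(\Omega_s)}^2+\omega\,\omega^2_p\varepsilon_0\,\|\mathbf{E}_{\eta,\lambda}\|_{\mathbf{L}^2(\Omega_s)}\,\|\mathbf{J}_{\eta,\lambda}\|_{\mathbf{L}^2(\Omega_s)}$. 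If the right-hand sides are bounded by a constant independent of $\lambda$, dividing by $\lambda$ and taking square roots yields (\ref{eq2-7-1}); and for $\lambda$ in any fixed bounded interval away from $0$ (in particular $\lambda\le\gamma$) the claim is trivial because $\|\mathbf{J}_{\eta,\lambda}\|_{Y(\Omega_s)}$ is already bounded, so it suffices to handle large $\lambda$.

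The main obstacle I anticipate is precisely the $\lambda$-uniform boundedness of $\|\mathbf{E}_{\eta,\lambda}\|_{\mathbf{L}^2(\Omega_s)}$ and $\|\mathbf{J}_{\eta,\lambda}\|_{\mathbf{L}^2(\Omega_s)}$, i.e., that the a priori estimate (\ref{eq2-7-0}) can be taken with $C$ independent of $\lambda$; this is not automatic from the cited statement since the coefficients $\gamma_{\eta,\lambda},\beta^2_{\eta,\lambda}$ blow up with $\lambda$. I would establish it either by inspecting the proof in \cite{ma2019mathematical}, or directly as follows: retaining $\int_{\Omega_{s,\eta}}\gamma|\mathbf{J}_{\eta,\lambda}|^2$ in the imaginary-part identity and using $\gamma_{\eta,\lambda}\ge\gamma$ (valid once $\lambda\ge\gamma$) already gives $\|\mathbf{J}_{\eta,\lambda}\|_{\mathbf{L}^2(\Omega_s)}\le(\omega^2_p\varepsilon_0/\gamma)\|\mathbf{E}_{\eta,\lambda}\|_{\mathbf{L}^2(\Omega_s)}$ with a $\lambda$-free constant; feeding this into the first equation of (\ref{eq2-4-0}), $\mathbf{E}_{\eta,\lambda}$ solves a fixed Maxwell problem whose coupling source $\mathrm{i}\omega\mathbf{J}_{\eta,\lambda}$ is controlled by $\|\mathbf{E}_{\eta,\lambda}\|_{\mathbf{L}^2(\Omega_s)}$, so the G{\aa}rding-inequality/Fredholm analysis of that Maxwell problem (as in \cite{Monk}) yields the $\lambda$-independent bound on $\|\mathbf{E}_{\eta,\lambda}\|_{X(\Omega)}$ and hence on $\|\mathbf{J}_{\eta,\lambda}\|_{Y(\Omega_s)}$. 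With this uniformity in hand, everything else is routine Cauchy--Schwarz bookkeeping.
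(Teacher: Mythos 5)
Your mechanism for extracting the two $\lambda$-weighted bounds --- testing the second equation of (\ref{eq2-4-0}) with $\mathbf{v}=\mathbf{J}_{\eta,\lambda}$, splitting into real and imaginary parts, and using that $\gamma_{\eta,\lambda}=\beta^{2}_{\eta,\lambda}=\lambda$ on $\Omega_{s}/\Omega_{s,\eta}$ --- is exactly the paper's, and you have correctly isolated the one non-routine ingredient: a bound on $\Vert\mathbf{E}_{\eta,\lambda}\Vert_{\mathbf{L}^{2}(\Omega_{s})}$ and $\Vert\mathbf{J}_{\eta,\lambda}\Vert_{\mathbf{L}^{2}(\Omega_{s})}$ that is \emph{uniform in} $\lambda$. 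However, neither of your two proposed ways of supplying it works. Citing the well-posedness estimate (\ref{eq2-7-0}) does not help, because that constant is produced by a Fredholm argument for fixed coefficients and nothing in the cited statement controls its dependence on $\gamma_{\eta,\lambda}$ and $\beta^{2}_{\eta,\lambda}$, which blow up with $\lambda$. Your ``direct'' bootstrap is circular: combining $\Vert\mathbf{J}_{\eta,\lambda}\Vert_{\mathbf{L}^{2}(\Omega_{s})}\leq(\omega_{p}^{2}\varepsilon_{0}/\gamma)\Vert\mathbf{E}_{\eta,\lambda}\Vert_{\mathbf{L}^{2}(\Omega_{s})}$ with Theorem~4.17 of \cite{Monk} only yields $\Vert\mathbf{E}_{\eta,\lambda}\Vert_{X(\Omega)}\leq C\Vert\mathbf{g}\Vert_{\mathbf{L}^{2}(\partial\Omega)}+C(\omega_{p}^{2}\varepsilon_{0}/\gamma)\Vert\mathbf{E}_{\eta,\lambda}\Vert_{\mathbf{L}^{2}(\Omega)}$, which closes only if the product of the constants is smaller than one; the stability constant in the Maxwell estimate depends on the domain and the frequency and there is no reason for such smallness. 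As written, this step fails.

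The paper closes the gap by also testing the \emph{first} (Maxwell) equation with $\mathbf{u}=\mathbf{E}_{\eta,\lambda}$ and taking imaginary parts of both equations. The two coupling terms are, up to known positive factors, the same quantity ${\rm Re}\,(\mathbf{J}_{\eta,\lambda},\mathbf{E}_{\eta,\lambda})_{s}$, so adding the two identities with the weight $\omega_{p}^{2}\varepsilon_{0}/\omega$ on the first makes them cancel and leaves
\begin{equation*}
\big(\gamma_{\eta,\lambda}\mathbf{J}_{\eta,\lambda},\mathbf{J}_{\eta,\lambda}\big)_{s}+\omega_{p}^{2}\varepsilon_{0}\Vert\mathbf{E}_{\eta,\lambda,T}\Vert^{2}_{\mathbf{L}^{2}(\partial\Omega)}\leq \frac{\omega_{p}^{2}\varepsilon_{0}}{\omega}\,\big|\langle\mathbf{g},\mathbf{E}_{\eta,\lambda,T}\rangle\big|\leq \frac{\omega_{p}^{2}\varepsilon_{0}}{2}\Vert\mathbf{E}_{\eta,\lambda,T}\Vert^{2}_{\mathbf{L}^{2}(\partial\Omega)}+C\Vert\mathbf{g}\Vert^{2}_{\mathbf{L}^{2}(\partial\Omega)},
\end{equation*}
with $C$ depending only on $\omega$, $\omega_{p}$, $\varepsilon_{0}$. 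This single inequality gives at once $\lambda\Vert\mathbf{J}_{\eta,\lambda}\Vert^{2}_{\mathbf{L}^{2}(\Omega_{s}/\Omega_{s,\eta})}\leq C\Vert\mathbf{g}\Vert^{2}$ (your first bound, with no a priori input needed) and $\Vert\mathbf{J}_{\eta,\lambda}\Vert_{\mathbf{L}^{2}(\Omega_{s})}\leq C\Vert\mathbf{g}\Vert$; feeding the latter into Theorem~4.17 of \cite{Monk} as a genuine, already-bounded source then yields $\Vert\mathbf{E}_{\eta,\lambda}\Vert_{X(\Omega)}\leq C\Vert\mathbf{g}\Vert$ without circularity, and the real-part identity you wrote finishes the divergence bound. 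The missing idea is thus the cancellation of the cross terms between the two energy identities; it is precisely what makes every constant independent of $\lambda$.
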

\begin{proof}
By choosing the test function $({\bf u},{\bf v}) =({\bf E}_{\eta,\lambda}, {\bf J}_{\eta,\lambda})$ in (\ref{eq2-4-0}), and taking the imaginary part of the two equations of (\ref{eq2-4-0}), we obtain
    \begin{equation}\label{eq2-8}
\begin{array}{@{}l@{}}
   {\displaystyle  \omega\|{\bf E}_{\eta,\lambda,T}\|^2_{\mathbf{L}^2(\partial \Omega)}=-\omega\,{\rm Re}\big({\bf J}_{\eta,\lambda},\,{\bf E}_{\eta,\lambda}\big)_{s}- {\rm Im}\langle {\bf g}, {\bf E}_{\eta,\lambda,T}\rangle,}\\[2mm]
  {\displaystyle \big(\gamma_{\eta,\lambda}{\bf J}_{\eta,\lambda},\, {\bf J}_{\eta,\lambda}\big)_{s} = \omega^{2}_{p}\varepsilon_{0}\,{\rm Re}\big({\bf E}_{\eta,\lambda},\,{\bf J}_{\eta,\lambda}\big)_{s},}
\end{array}
\end{equation}
which implies that
\begin{equation}\label{eq2-8-0}
\begin{array}{lll}
{\displaystyle \big(\gamma_{\eta,\lambda}{\bf J}_{\eta,\lambda},\, {\bf J}_{\eta,\lambda}\big)_{s} + \omega^{2}_{p}\varepsilon_{0}\|{\bf E}_{\eta,\lambda,T}\|^2_{\mathbf{L}^2(\partial \Omega)} \leq \frac{\omega^{2}_{p}\varepsilon_{0}}{\omega}|\langle {\bf g}, {\bf E}_{\eta,\lambda,T}\rangle| }\\[2mm]
{\displaystyle \qquad \leq \frac{\omega^{2}_{p}\varepsilon_{0}}{2}\|{\bf E}_{\eta,\lambda,T}\|^2_{\mathbf{L}^2(\partial \Omega)} + C\|{\bf g}\|^2_{\mathbf{L}^2(\partial \Omega)}.}
\end{array}
\end{equation}
Since $\gamma_{\eta,\lambda}|_{\Omega_{s}/\Omega_{s,\eta}} = \lambda$, from (\ref{eq2-8-0}) we deduce
\begin{equation}\label{eq2-8-1}
\lambda\Vert {\bf J}_{\eta,\lambda}\Vert^{2}_{\mathbf{L}^2(\Omega_{s}/\Omega_{s,\eta})}\leq  \big(\gamma_{\eta,\lambda}{\bf J}_{\eta,\lambda},\, {\bf J}_{\eta,\lambda}\big)_{s} \leq C\|{\bf g}\|^2_{\mathbf{L}^2(\partial \Omega)},
\end{equation}
which gives the first inequality in (\ref{eq2-7-1}). By applying Theorem~4.17 of \cite{Monk} to the first equation of (\ref{eq2-4-0}), we further have
    \begin{equation}\label{eq2-8-2}
\Vert {\bf E}_{\eta,\lambda} \Vert_{X(\Omega)} \leq C(\|{\bf g}\|_{\mathbf{L}^2(\partial \Omega)}+\Vert {\bf J}_{\eta,\lambda}\Vert_{\mathbf{L}^2(\Omega_{s})})\leq C\|{\bf g}\|_{\mathbf{L}^2(\partial \Omega).}
    \end{equation}

Next we choose ${\bf v}={\bf J}_{\eta,\lambda}$ in the second equation of (\ref{eq2-4-0}) and take the real part of the equation to get
\begin{equation}\label{eq2-8-3}
    \Vert \beta_{\eta,\lambda} \mathbf{div}\,{\bf J}_{\eta,\lambda} \Vert^{2}_{{\bf L}^{2}(\Omega_{s})} = \omega^{2}\Vert {\bf J}_{\eta,\lambda} \Vert^{2}_{{\bf L}^{2}(\Omega_{s})}+ \omega\omega_{p}^{2}\varepsilon_{0} {\rm Im}\,\big({\bf E}_{\eta,\lambda},\,{\bf J}_{\eta,\lambda}\big)_{s}\leq C\|{\bf g}\|^2_{\mathbf{L}^2(\partial \Omega)},
\end{equation}
where we have used (\ref{eq2-8-2}). Then the second inequality in (\ref{eq2-7-1}) follows immediately from (\ref{eq2-8-3}) and the fact that $\beta_{\eta,\lambda}|_{\Omega_{s}/\Omega_{s,\eta}} = \lambda^{\frac12}$. \qquad  \end{proof}

Next we construct a function $\mathbf{w} \in {\bf H}(\rm {\bf div},\Omega_{s,\eta})$ that has the same normal trace on $\partial \Omega_{s,\eta}$ as ${\bf J}_{\eta,\lambda}$.
\begin{lemma}\label{lemma2}
Let $({\bf E}_{\eta,\lambda}, {\bf J}_{\eta,\lambda})$ be the solution of the extended system (\ref{eq2-4-0}). Then there exists a $\mathbf{w} \in {\bf H}(\rm {\bf div},\Omega_{s,\eta})$ such that $\mathbf{n} \cdot \mathbf{w} = \mathbf{n} \cdot {\bf J}_{\eta,\lambda}$ on $\partial \Omega_{s,\eta}$, and
\begin{equation}
 \Vert{\bf w}\Vert_{{\bf H}(\rm {\bf div},\Omega_{s,\eta})}\leq C/\lambda^{1/2},
\end{equation}
where $C$ is a positive constant independent of $\lambda$.
\end{lemma}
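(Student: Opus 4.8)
The plan is to build $\mathbf{w}$ by prescribing its normal trace on $\partial\Omega_{s,\eta}$ to equal that of $\mathbf{J}_{\eta,\lambda}$ and then controlling its $\mathbf{H}(\mathbf{div})$-norm by the norm of this prescribed datum. The key observation is that, although $\mathbf{J}_{\eta,\lambda}$ is itself $O(1)$ inside the nanostructures, its restriction to the dielectric part $\Omega_s\setminus\bar{\Omega}_{s,\eta}$ is $O(\lambda^{-1/2})$ in $\mathbf{H}(\mathbf{div})$ by Lemma~\ref{lemma1}. Since $\mathbf{J}_{\eta,\lambda}\in\mathbf{H}(\mathbf{div};\Omega_s)$ (from the well-posedness of the extended system), $\mathbf{div}\,\mathbf{J}_{\eta,\lambda}\in L^2(\Omega_s)$ globally, so the normal trace of $\mathbf{J}_{\eta,\lambda}$ is continuous across the interface $\partial\Omega_{s,\eta}$; hence the datum $\psi:=\mathbf{n}\cdot\mathbf{J}_{\eta,\lambda}|_{\partial\Omega_{s,\eta}}$ is also the normal trace on $\partial\Omega_{s,\eta}$ of $\mathbf{J}_{\eta,\lambda}|_{\Omega_s\setminus\bar{\Omega}_{s,\eta}}$. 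Applying the standard normal-trace inequality $\|\mathbf{n}\cdot\mathbf{v}\|_{H^{-1/2}(\partial D)}\le C\|\mathbf{v}\|_{\mathbf{H}(\mathbf{div};D)}$ with $D=\Omega_s\setminus\bar{\Omega}_{s,\eta}$ (a Lipschitz domain whose boundary splits into the separated pieces $\partial\Omega_s$ and $\partial\Omega_{s,\eta}$), together with Lemma~\ref{lemma1}, yields $\|\psi\|_{H^{-1/2}(\partial\Omega_{s,\eta})}\le C\|\mathbf{J}_{\eta,\lambda}\|_{\mathbf{H}(\mathbf{div};\Omega_s\setminus\Omega_{s,\eta})}\le C/\lambda^{1/2}$, where $C$ depends on the fixed geometry and hence possibly on $\eta$, which is permitted in Theorem~\ref{thm:2.1}.

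It then remains to lift $\psi$ back into $\Omega_{s,\eta}$ with a controlled norm. The quickest route is to invoke that the normal-trace operator $\gamma_{\mathbf n}:\mathbf{H}(\mathbf{div};\Omega_{s,\eta})\to H^{-1/2}(\partial\Omega_{s,\eta})$ is a continuous surjection between Banach spaces, so the open mapping theorem gives a $\mathbf{w}\in\mathbf{H}(\mathbf{div};\Omega_{s,\eta})$ with $\gamma_{\mathbf n}\mathbf{w}=\psi$ and $\|\mathbf{w}\|_{\mathbf{H}(\mathbf{div};\Omega_{s,\eta})}\le C\|\psi\|_{H^{-1/2}(\partial\Omega_{s,\eta})}\le C/\lambda^{1/2}$. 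If a concrete construction is preferred, one solves on each connected component $\Omega_\eta^k$ the Neumann problem $\Delta\phi_k=c_k$ in $\Omega_\eta^k$, $\partial_{\mathbf n}\phi_k=\psi$ on $\partial\Omega_\eta^k$, with $c_k:=|\Omega_\eta^k|^{-1}\langle\psi,1\rangle_{\partial\Omega_\eta^k}$ chosen so that the compatibility condition holds; elliptic regularity gives $\phi_k\in H^1(\Omega_\eta^k)$ with $\|\nabla\phi_k\|_{\mathbf{L}^2(\Omega_\eta^k)}\le C\|\psi\|_{H^{-1/2}(\partial\Omega_\eta^k)}$, and setting $\mathbf{w}:=\nabla\phi_k$ on $\Omega_\eta^k$ produces a field with $\mathbf{div}\,\mathbf{w}=c_k$ piecewise constant, $\mathbf{n}\cdot\mathbf{w}=\psi=\mathbf{n}\cdot\mathbf{J}_{\eta,\lambda}$ on $\partial\Omega_{s,\eta}$, and, after summing over $k$, the claimed bound $\|\mathbf{w}\|_{\mathbf{H}(\mathbf{div};\Omega_{s,\eta})}\le C/\lambda^{1/2}$.

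I expect no computational difficulty here; the only point needing care is the continuity of the normal trace across $\partial\Omega_{s,\eta}$, that is, that the trace of $\mathbf{J}_{\eta,\lambda}$ seen from inside each nanostructure coincides with the one seen from the dielectric side. This is precisely where the global regularity $\mathbf{J}_{\eta,\lambda}\in\mathbf{H}(\mathbf{div};\Omega_s)$ enters, and it is the mechanism by which the smallness of $\mathbf{J}_{\eta,\lambda}$ on the dielectric region (Lemma~\ref{lemma1}) is transferred into smallness of the interface datum $\psi$, and hence of the lifting $\mathbf{w}$.
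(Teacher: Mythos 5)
Your proof is correct and follows essentially the same route as the paper: first bound the interface datum $\mathbf{n}\cdot\mathbf{J}_{\eta,\lambda}$ in $H^{-1/2}(\partial\Omega_{s,\eta})$ by the $\mathbf{H}(\mathbf{div})$-norm of $\mathbf{J}_{\eta,\lambda}$ on the dielectric region via Lemma~\ref{lemma1} and the normal-trace inequality, then realize $\mathbf{w}$ as the gradient of a scalar Neumann solution in $\Omega_{s,\eta}$. The only cosmetic difference is in the lifting: the paper solves the coercive problem $(\mathbf{grad}\,\phi,\mathbf{grad}\,\psi)_{s,\eta}+(\phi,\psi)_{s,\eta}=\langle\mathbf{n}\cdot\mathbf{J}_{\eta,\lambda},\psi\rangle$ (so that $\mathbf{div}\,\mathbf{w}=\phi$ and no compatibility condition arises), whereas you use either the abstract surjectivity of the normal-trace operator or the pure Neumann Laplacian with the compatibility constants $c_k$ --- all three yield the same $O(\lambda^{-1/2})$ bound.
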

\begin{proof}
  By Lemma \ref{lemma1} and \cite[Theorem 3.24]{Monk}, we find
        \begin{equation}\label{eq:2-4-3}
            \Vert \mathbf{n}\cdot {\bf J}_{\eta,\lambda} \Vert_{{ H}^{-\frac12}(\partial\Omega_{s,\eta})} \leq C\Vert{\bf J}_{\eta,\lambda}\Vert_{{\bf H}({\rm {\bf div}},\Omega_{s}/\Omega_{s,\eta})}\leq C/\lambda^{1/2}.
        \end{equation}
        Let $\phi\in H^1(\Omega_{s,\eta})$ be the solution of the Neumann problem
        \begin{equation}\label{eq:2-4-4}
            (\mathbf{grad}\, \phi,\mathbf{grad}\, \psi)_{s,\eta}+(\phi,\psi)_{s,\eta}=<\mathbf{n}\cdot {\bf J}_{\eta,\lambda},\,\psi>_{{ H}^{-\frac12}(\partial\Omega_{s,\eta}),{ H}^{\frac12}(\partial\Omega_{s,\eta})}
        \end{equation}
        for all $\psi\in H^{1}(\Omega_{s,\eta})$. Taking $\phi$ as the test function in (\ref{eq:2-4-4}) gives
        \begin{equation*}
        \begin{array}{lll}
        {\displaystyle  \Vert \phi\Vert_{{H}^{1}(\Omega_{s,\eta})}^{2} \leq \,| <\mathbf{n}\cdot {\bf J}_{\eta,\lambda},\,\phi>_{{ H}^{-\frac12}(\partial\Omega_{s,\eta}),{ H}^{\frac12}(\partial\Omega_{s,\eta})} | }\\[2mm]
        {\displaystyle \quad \leq \Vert \mathbf{n}\cdot {\bf J}_{\eta,\lambda} \Vert_{{ H}^{-\frac12}(\partial\Omega_{s,\eta})}\Vert \phi\Vert_{{ H}^{\frac12}(\partial\Omega_{s,\eta})}\leq C\Vert \mathbf{n}\cdot {\bf J}_{\eta,\lambda}\Vert_{{ H}^{-\frac12}(\partial\Omega_{s,\eta})}\Vert \phi\Vert_{{ H}^{1}(\Omega_{s,\eta})},}
            \end{array}
   \end{equation*}
   which yields
        \begin{equation}\label{eq:2-4-5}
            \Vert \phi\Vert_{{ H}^{1}(\Omega_{s,\eta})}\leq C/\lambda^{1/2}
        \end{equation}
        by a use of (\ref{eq:2-4-4}).

        Let ${\bf w}=\mathbf{grad}\,\phi\in {\bf L}^2(\Omega_{s,\eta})$. Taking $\psi \in \mathcal{C}_0^{\infty}(\Omega_{s,\eta})$ in (\ref{eq:2-4-4}), we see that
        \begin{equation}
            ({\bf w},\mathbf{grad}\, \psi)_{s,\eta}+(\phi,\psi)_{s,\eta}=0
        \end{equation}
        for all $\psi\in \mathcal{C}_0^{\infty}(\Omega_{s,\eta})$, which implies that $\mathbf{div}\, {\bf w}=\phi\in L^2(\Omega_{s,\eta})$. Thus ${\bf w}\in {\bf H}(\rm {\bf div},\Omega_{s,\eta})$ satisfies $\mathbf{n} \cdot \mathbf{w} = \mathbf{n} \cdot {\bf J}_{\eta,\lambda}$ on $\partial \Omega_{s,\eta}$, and
        \begin{equation}
            \Vert{\bf w}\Vert_{{\bf H}(\rm {\bf div},\Omega_{s,\eta})}=\Vert \phi\Vert_{{ H}^{1}(\Omega_{s,\eta})}\leq C/\lambda^{1/2},
        \end{equation}
by using (\ref{eq:2-4-5}). \qquad \end{proof}

Now we turn to the proof of Theorem~\ref{thm:2.1}.

Let ${\bf e}={\bf E}_{\eta,\lambda}-{\bf E}_{\eta}$ and ${\bf j}={\bf J}_{\eta,\lambda}-{\bf J}_{\eta}-{\bf w}$. Note that $\mathbf{n}\cdot\mathbf{j} = 0$ on $\partial \Omega_{s,\eta}$ and thus $\mathbf{j}\in Y(\Omega_{s,\eta})$. By subtracting (\ref{eq2-4}) from (\ref{eq2-4-0}) and recalling that $\gamma_{\eta,\lambda}|_{\Omega_{s,\eta}} = \gamma$ and $\beta^{2}_{\eta,\lambda}|_{\Omega_{s,\eta}} = \beta^{2}$, we see that $({\bf e},\,{\bf j})$ satisfies
        \begin{equation}\label{eq:2-4-2}
\left\{
\begin{array}{@{}l@{}}
    {\displaystyle  \big(\tilde{\mu}_{\eta}^{-1} \mathbf{curl}\,{\bf e}, \,\mathbf{curl}\,{\bf u}) - \omega^{2}(\tilde{\varepsilon}_{\eta}{\bf e}, \,{\bf u}) - {\mathrm {{i}}}\omega \langle {\bf e}_{T}, {\bf u}_{T}\rangle =  {\mathrm {{i}}}\omega\big({\bf j}, \,{\bf u}\big)_{s,\eta} }\\[2mm]
    {\displaystyle \qquad \qquad \qquad \qquad \qquad +\,{\mathrm {{i}}}\omega\big({\bf {\bf{w}}}, \,{\bf u}\big)_{s,\eta}+\mathrm{i}\omega\big(\mathbf{J}_{\eta,\lambda},\, \mathbf{u}\big)_{\Omega_{s}/\Omega_{s,\eta}},}\\[3mm]
    {\displaystyle \beta^{2} \big(\mathbf{div}\, {\bf j},\,\mathbf{div}\, {\bf v}\big)_{s,\eta}-\omega(\omega+{\mathrm {{i}}}\gamma)({\bf j},\,{\bf v})_{s,\eta} = -{\mathrm {{i}}}\omega \omega_{p}^{2}\varepsilon_{0}\big({\bf e},\,{\bf v}\big)_{s,\eta}}\\[2mm]
    {\displaystyle \quad \quad \quad\quad \quad \quad \quad -\beta^{2} \big(\mathbf{div}\, {\bf w},\,\mathbf{div}\, {\bf v}\big)_{s,\eta}+\omega(\omega+{\mathrm {{i}}}\gamma)({\bf w},\,{\bf v})_{s,\eta}}
\end{array}
\right.
\end{equation}
for all $({\bf u},{\bf v})\in X(\Omega)\times Y(\Omega_{s,\eta})$. We will prove that $({\bf e},\,{\bf j})$ satisfies the estimate
\begin{equation}\label{eq:2-4-6}
\Vert {\bf e} \Vert_{X(\Omega)}+\Vert {\bf j}\Vert_{Y(\Omega_{s,\eta})}\leq C/{\lambda}^{1/2},
\end{equation}
where $C>0$ is independent of $\lambda$. Theorem~\ref{thm:2.1} follows from (\ref{eq:2-4-6}) and Lemma~\ref{lemma2}.

To prove (\ref{eq:2-4-6}), we first choose $(\mathbf{u},\,\mathbf{v})=(\mathbf{e},\,\mathbf{j})$ in (\ref{eq:2-4-2}) and take the imaginary part of the equations to get
\begin{equation}\label{eq:2-4-7}
\begin{array}{lll}
{\displaystyle \Vert \mathbf{e}_{T}\Vert^{2}_{\mathbf{L}^{2}(\partial \Omega)} = -{\rm Re} \,(\mathbf{j},\,\mathbf{e})_{s,\eta}-{\rm Re} \,(\mathbf{w},\,\mathbf{e})_{s,\eta}-\mathrm{Re}\,(\mathbf{J}_{\eta,\lambda},\, \mathbf{e})_{\Omega_{s}/\Omega_{s,\eta}},} \\[2mm]
{\displaystyle \gamma\Vert \mathbf{j}\Vert^{2}_{\mathbf{L}^{2}(\Omega_{s,\eta})} = \omega^{2}_{p}\varepsilon_{0}\,\mathrm{Re}\,(\mathbf{e},\,\mathbf{j})_{s,\eta}- \gamma\,\mathrm{Re}\,(\mathbf{w},\,\mathbf{j})_{s,\eta}.}
\end{array}
\end{equation}
Combining the two equations in (\ref{eq:2-4-7}) and using Lemma~\ref{lemma1} and~\ref{lemma2}, we obtain
\begin{equation*}
\begin{array}{lll}
{\displaystyle \omega^{2}_{p}\varepsilon_{0} \Vert \mathbf{e}_{T}\Vert^{2}_{\mathbf{L}^{2}(\partial \Omega)} + \gamma\Vert \mathbf{j}\Vert^{2}_{\mathbf{L}^{2}(\Omega_{s,\eta})}}\\[2mm]
{\displaystyle \leq \omega^{2}_{p}\varepsilon_{0} |(\mathbf{w},\,\mathbf{e})_{s,\eta}| +\omega^{2}_{p}\varepsilon_{0}|(\mathbf{J}_{\eta,\lambda},\, \mathbf{e})_{\Omega_{s}/\Omega_{s,\eta}}| +\gamma |(\mathbf{w},\,\mathbf{j})_{s,\eta}| }\\[2mm]
{\displaystyle  \leq C\big(\Vert \mathbf{w}\Vert_{\mathbf{L}^{2}(\Omega_{s,\eta})} + \Vert \mathbf{J}_{\eta,\lambda}\Vert_{\mathbf{L}^{2}(\Omega_{s}/\Omega_{s,\eta})} \big)\Vert \mathbf{e}\Vert_{\mathbf{L}^{2}(\Omega)}  + \frac{\gamma}{2} \Vert \mathbf{j}\Vert^{2}_{\mathbf{L}^{2}(\Omega_{s,\eta})}+ \frac{\gamma}{2} \Vert \mathbf{w}\Vert^{2}_{\mathbf{L}^{2}(\Omega_{s,\eta})}}\\[3mm]
{\displaystyle \leq {C}{\lambda^{-\frac12}}\Vert \mathbf{e}\Vert_{\mathbf{L}^{2}(\Omega)}  +  \frac{\gamma}{2} \Vert \mathbf{j}\Vert^{2}_{\mathbf{L}^{2}(\Omega_{s,\eta})}+ {C}{\lambda^{-1}},}
\end{array}
\end{equation*}
which implies that
\begin{equation}\label{eq:2-4-8}
\Vert \mathbf{j}\Vert_{\mathbf{L}^{2}(\Omega_{s,\eta})}\leq {C}{\lambda^{-\frac14}}\Vert \mathbf{e}\Vert^{\frac12}_{\mathbf{L}^{2}(\Omega)} +{C}{\lambda^{-\frac12}}.
\end{equation}
Now we consider the first equation of (\ref{eq:2-4-2}) separately in which $\mathbf{j}$ and $\mathbf{w}$ are considered as given sources. By employing Theorem~4.17 of \cite{Monk} again, we find
\begin{equation*}
\Vert {\bf e} \Vert_{X(\Omega)} \leq C\big(\Vert \mathbf{j}\Vert_{\mathbf{L}^{2}(\Omega_{s,\eta})} + \Vert \mathbf{w}\Vert_{\mathbf{L}^{2}(\Omega_{s,\eta})} + \Vert \mathbf{J}_{\eta,\lambda}\Vert_{\mathbf{L}^{2}(\Omega_{s}/\Omega_{s,\eta})} \big),
\end{equation*}
and consequently, with a use of (\ref{eq:2-4-8}), Lemma~\ref{lemma1}, and Lemma~\ref{lemma2}, we come to
\begin{equation*}
\Vert {\bf e} \Vert_{X(\Omega)} \leq {C}{\lambda^{-\frac14}}\Vert \mathbf{e}\Vert^{\frac12}_{\mathbf{L}^{2}(\Omega)} +{C}{\lambda^{-\frac12}}\leq \frac{1}{2} \Vert \mathbf{e}\Vert_{\mathbf{L}^{2}(\Omega)} + {C}{\lambda^{-\frac12}},
\end{equation*}
which yields that
\begin{equation}\label{eq:2-4-9}
\Vert {\bf e} \Vert_{X(\Omega)} \leq {C}{\lambda^{-\frac12}}.
\end{equation}
Next we consider the second equation of (\ref{eq:2-4-2}) separately with given sources $\mathbf{e}$ and $\mathbf{w}$. By applying a similar argument as in the proofs of Theorem~4.17 in \cite{Monk} and Theorem~3.1 in \cite{ma2019mathematical}, we get the following estimate
\begin{equation*}
\Vert {\bf j}\Vert_{Y(\Omega_{s,\eta})} \leq C\big(\Vert \mathbf{e}\Vert_{\mathbf{L}^{2}(\Omega_{s,\eta})} + \Vert{\bf w}\Vert_{{\bf H}(\rm {\bf div},\Omega_{s,\eta})} \big).
\end{equation*}
With Lemma~\ref{lemma2} and (\ref{eq:2-4-9}), it follows that
\begin{equation}\label{eq:2-4-10}
\Vert {\bf j}\Vert_{Y(\Omega_{s,\eta})}\leq C{\lambda^{-\frac12}}.
\end{equation}
Combining (\ref{eq:2-4-9}) and (\ref{eq:2-4-10}) gives (\ref{eq:2-4-6}) and we complete the proof of Theorem~\ref{thm:2.1}.

\bibliographystyle{gbt-7714-2015-numerical}
  \bibliography{ref}

\end{document}